\DeclareMathOperator{\Real}{\mathbb{R}}
\newcommand{\jap}[1]{\!\left<#1\right>}
\newcommand{\dx}{\partial_x}
\newcommand{\dxi}{\partial_{\xi}}
\providecommand{\norm}[1]{\lVert#1\rVert}
\providecommand{\abs}[1]{\lvert#1\rvert}
\newcommand{\conv}{\ast}
\newcommand{\pd}{\partial}
\newcommand{\dt}{\pd_t}
 \newcommand{\linf}{L^{\infty}}
\newcommand{\tld}[1]{\tilde{#1}}
\newcommand{\R}{\Real}
\newcommand{\vu}{{\bf u}}
\newcommand{\eps}{\varepsilon}
  \newcommand{\vue}{{\bf u^\eps}}
  \newcommand{\ue}{u_{\eps,\delta}}
  \newcommand{\uee}{u_{\eps'}}
\newcommand{\les}{\lesssim}
\newcommand{\vz}{\vec{z}}
\newcommand{\dz}{\partial_{\vz}}
  \newcommand{\vo}{\vec{0}}
\newcommand{\Schw}{\mathscr{S}}
  \newcommand{\K}{\mathcal{K}}
\newcounter{theorem_counter}
\newtheorem{thm}{Theorem}
\newtheorem{lem}[thm]{Lemma}
\newtheorem{prop}[thm]{Proposition}
\newtheorem{rem}[thm]{Remark}
\newtheorem{coro}[thm]{Corollary}
\newtheorem{defn}[thm]{Definition}
\title{Well-posedness of fully nonlinear KdV-type evolution equations}
\author{Timur Akhunov}
\address{The College of New Jersey, Department of Mathematics and Statistics, Trenton, NJ, USA}
\email{akhunovt@tcnj.edu}
\author{David M. Ambrose}
\address{Drexel University, Department of Mathematics, Philadelphia, PA, USA}
\email{dma68@drexel.edu}
\author{J. Douglas Wright}
\address{Drexel University, Department of Mathematics, Philadelphia, PA, USA}
\email{jdoug@math.drexel.edu}
\begin{document}

\maketitle

\begin{abstract}We study the well-posedness of the initial value problem for fully nonlinear evolution equations,
$u_{t}=f[u],$ where $f$ may depend on up to the first three spatial derivatives of $u.$  We make three primary
assumptions about the form of $f:$ a regularity assumption, a dispersivity assumption, and an assumption related to the
strength of backwards diffusion.  Because the third derivative of $u$ is present in the right-hand side and we effectively
assume that the equation is dispersive, we say that these fully nonlinear evolution equations are of KdV-type.
We prove the well-posedness of the initial value problem in the Sobolev space $H^{7}(\mathbb{R}).$
The proof relies on gauged energy estimates which follow after making two regularizations, a parabolic regularization
and mollification of the initial data.
\end{abstract}

\section{Introduction}

We study the question of well-posedness in Sobolev spaces of the initial value problem for
the fully nonlinear evolution equation
\begin{equation}\label{generalEquation}
u_{t}=f(u_{xxx},u_{xx},u_x,u,x,t),
\end{equation}
under suitable conditions on the function $f$ and its partial derivatives.
Chief among these conditions will be a condition which ensures that the
equation is dispersive, so that the contribution of $u_{xxx}$ to the function
$f$ is in a sense dominant and nonvanishing.  An explicit calculation in
Fourier space shows that the equation $u_{t}=u_{xxx}-u_{xx}$ is ill-posed in $L^2$-based Sobolev spaces
because of the presence of backwards diffusion; another principal condition
for well-posedness therefore must be a balance between
the effects of dispersion and backwards diffusion.

There are several papers treating existence theory for semilinear dispersive
equations, especially the case in which the leading-order term has a constant
coefficient.  Kenig, Ponce, and Vega show local well-posedness of the initial
value problem for
\begin{equation}\label{kpvEquation}
u_{t}+\partial_{x}^{2j+1}u+P(\partial^{2j+1}_{x}u,\ldots,\partial_{x}u,u)=0,
\end{equation}
with $P$ a polynomial and $j\in\mathbb{N},$ using weighted Sobolev spaces
\cite{kpv1994}.
Kenig and Staffilani treated the generalization of \eqref{kpvEquation} to systems,
again proving local well-posedness in weighted Sobolev spaces \cite{KenStaf97}.
Kenig and Pilod  in \cite{kenig-pilod2} studied some special cases of \eqref{kpvEquation}
which include the integrable KdV hierarchy.
Recently, Harrop-Griffiths has treated the $j=1$ case of \eqref{kpvEquation}.
In \cite{harrop-griffiths1}, local well-posedness is proved in certain translation-invariant
subspaces of Sobolev spaces on the real line.  Under a further assumption on the
polynomial $F,$ Harrop-Griffiths also proves well-posedness in Sobolev spaces
\cite{harrop-griffiths2}.  In another recent work, Germain, Harrop-Griffiths, and Marzuola
prove existence of spatially localized solutions for a particular quasilinear KdV-type equation
\cite{newMarzuolaEtAl}.

As mentioned briefly above, all
of these results must contend with the fact that in some cases, equations of
the form \eqref{generalEquation} or \eqref{kpvEquation} can be ill-posed;
ill-posedness results have been proved in \cite{Pilod08}, \cite{Akh12},
\cite{AmWr12}.
The choice of spaces other than the $L^{2}$-based Sobolev spaces $H^{s}$
in \cite{kpv1994}, \cite{KenStaf97}, \cite{harrop-griffiths1} allows the ill-posedness
to be avoided.  Alternatively, in \cite{harrop-griffiths2}, the additional condition on
the polynomial $F$ (that there is no term of the form $uu_{xx}$) removes the
ill-posedness.

In the non-constant coefficient, semilinear case, Cai shows dispersive smoothing properties
for solutions of $u_{t}-a(x,t)\partial_{x}^{3}u+P(x,t,\partial_{x}^{2}u,\partial_{x}u,u)=0$ \cite{cai}.
Here, the coefficient $a$ is required to be bounded away from zero, so that the dispersion does
not vanish.  The result of \cite{cai} is related to results of \cite{CKS92}; there,
Craig, Kappeler, and Strauss proved well-posedness and dispersive smoothing for solutions of
\eqref{generalEquation}, under some fairly strong assumptions.  Both \cite{cai} and
\cite{CKS92} use weights to ensure certain rates of decay at infinity; in the semilinear
case, Cai is able to weaken the assumptions of Craig, Kappeler, and Strauss.
The assumptions of these papers, as in the present work, include a condition that controls the effect
of backward diffusion.  In the present work, as in \cite{CKS92} and unlike \cite{cai},
we treat the fully nonlinear evolution equation \eqref{generalEquation}; similarly to \cite{cai},
we are interested in significantly weakening the conditions imposed in \cite{CKS92}.
Further differentiating our work from \cite{CKS92}, we do not use weights, and instead
only use the $L^{2}$-based Sobolev spaces $H^{s}.$

The authors have previously established well-posedness results for
 initial value problems in some special cases of the equation
\eqref{generalEquation}, including some quasilinear equations.
Akhunov has shown well-posedness of quasilinear systems  \cite{Akh13}
and linear equations \cite{Akh12} on the real line.
In \cite{AmWr12}, Ambrose and Wright studied linear
equations on periodic intervals, as well as certain specific quasilinear equations such as the $K(2,2)$
Rosenau-Hyman compacton equation \cite{Ros-Hym93} and the Harry Dym equation \cite{kruskal}.
The results of the present paper are given on the real line; the authors may treat the spatially periodic
case in a future work.

The conditions that we presently require on $f$ are similar to the conditions assumed by the authors in
\cite{Akh12} and \cite{AmWr12}, adapted to the fully nonlinear evolution
equation \eqref{generalEquation}, and allowing for as much generality as possible.  These
conditions will be specified more technically in what follows, but they are, roughly: (a) the function $f$
is sufficiently smooth, (b) the partial derivative of $f$ with respect to $u_{xxx}$ does not vanish, so that
the dispersion does not degenerate, and (c) the ``modified diffusion ratio,'' to be defined, but which
balances the effects of dispersion and backwards diffusion, must either be integrable or be the derivative
of a smooth function (this condition is closely related to the ``Mizohata" condition needed in \cite{harrop-griffiths1}).
With these conditions satisfied, we are able to use a gauged energy estimate and
a parabolic regularization to prove well-posedness of initial value problems in Sobolev spaces.

Allowing for the most general $f$ possible requires us to study solutions at somewhat high regularity, $H^{7}.$
We discuss certain special cases in which we are able to lower this regularity requirement, such as the
case of quasilinear equations.  We mention that by disallowing the occurrence of terms of the form $uu_{xx}$
in the nonlinearity, Harrop-Griffiths was able to use Sobolev spaces $H^{s}$ for $s>\frac{9}{2}$ for
semilinear equations;
furthermore, Harrop-Griffiths includes a discussion of when lower regularity results are possible, depending
on whether certain terms are or are not present in the nonlinearity.
In \cite{AmWr12}, in the spatially periodic case,
well-posedness of the initial value problem with strictly positive data for the $K(2,2)$ equation
$u_{t}+(u^{2})_{xxx}+(u^{2})_{x}=0,$ among other quasilinear problems, was demonstrated in $H^{4}.$

The plan of the paper is as follows:  in Section 2, we state the main result.  In Section 3, we introduce a regularized
problem.  In Section 4, we prove a useful estimate for a related linear evolution equation.  In Section 5, we use this
linear estimate to find bounds for the solution of the nonlinear regularized problem.  In Section 6, we pass to the limit,
finding solutions to the original, non-regularized nonlinear problem.  We close with some discussion in Section 7, and as an appendix we provide an explicit calculation of the third spatial derivative
of \eqref{generalEquation}.

D.M. Ambrose is grateful to the National Science Foundation for support through grant DMS-1515849.
J.D. Wright is grateful to the National Science Foundation for support through grant DMS-1511488.

   \section{Statement of the result}
       Suppose that the following assumption on the function $f(\vz,x,t)$ from \eqref{generalEquation} holds:
       \begin{description}
         \item[Condition (A1)] Regularity. Assume $f(\vz,x,t) \in C^1_t C^{11}_{\vz}W^{11,\infty}_x$. \\
       That is, the nonlinear function can grow in the dependent variable $\vz$ and is bounded in the independent space-time variables. Stated more precisely, for any $k\ge 0$ and $|(z_0,\ldots z_3)|\le k$, we have for each choice
       of $\alpha,$ $\beta,$ and $\gamma$ satisfying $\alpha \le 1$, $\max \{\abs{\beta},\gamma\} \le 11,$ there exists a positive increasing function $s \mapsto C_{\alpha,\beta,\gamma}(s)$ such that
       \begin{align}\label{coeff-bdd}
         \norm{\dt^\alpha \partial_{z}^\beta\dx^\gamma f(\vz,x,t)}_{L^\infty_{x,[-1,1]}}\le C_{\alpha,\beta,\gamma}(\abs{\vz})\le C(k).
       \end{align}

       \end{description}
       \begin{rem}\label{rem:dx-u}
         To simplify the accounting of functions depending on derivatives of $u$, we introduce the following notation:
$$
a(\dx^k \vu):=a(\dx^k u,\ldots,\dx u, u,x,t)
$$
i.e. we bold the highest derivative on which the function in question depends and hide all other factors. Often we also introduce a variable $\vz(x,t)=(\dx^k u,\ldots,\dx u,u,x,t)$ for the same purpose.

Furthermore, we define
$f_{z_i}=\partial_{z_{i}} f(z_3,\ldots z_0,x,t)$ to be derivatives with respect to the derivatives of the unknown solution $u$.
       \end{rem}
           \begin{description}
           \item[Condition (A2)] Suppose that the ``modified diffusion ratio" $g_M$, defined below, has the following form
            \begin{align}\label{Diffusion}
           g_M:=\frac{f_{z_2}}{f_{z_3}}(\dx^3\vu)= \dx [ g_D(\dx^2 \vu)] + g_H(\dx^3\vu),
         \end{align}
         where $g_D \in C^1_t C^{0}_z L^\infty_{x,z}$ and $g_H \in C^1_t C^2_{z}L^\infty_x$, i.e. satisfying bounds similar to \eqref{coeff-bdd} with lower regularity. Moreover,
         \begin{align}\label{g_H-is-cubic}
           g_H(\vo,x,t) \equiv 0 \equiv\partial_{z_j} g_H(\vo,x,t) \text{ for } j=0,\ldots 3.
         \end{align}
       \item[Condition (A3)] Assume $f(\vo,x,t)=0$.
         \end{description}
Before stating the next result, we comment on the meaning of these conditions.

\begin{rem}\label{coefficients-simplicity}
  Unlike (A1) and (A2), condition (A3) is done for simplicity. All the arguments below are valid, but are lengthier in the presence of an additional forcing term $f(\vo,x,t)\neq 0$ that is independent of the solution.\\

  Likewise, \eqref{Diffusion} can be thought of as Taylor expansion of $\frac{f_{z_2}}{f_{z_3}}$ with respect to a vector $\dx^3\vu$, with additional assumptions. By ruling out degeneracy of the dispersion coefficient $f_{z_3}$ (to be justified below), we observe that $\frac{f_{z_2}}{f_{z_3}}$ is a smooth function and hence the most general Taylor expansion is
  \begin{align*}
    \frac{f_{z_2}}{f_{z_3}} = g_C(x,t) + \sum_{j=0}^3 g_j(x,t)\dx^j u + g_H(x,t,\dx^3 \vu),
  \end{align*}
  where $g_H$ has no constant or linear terms in $\dx^3\vu$, like $g_H$ in \eqref{g_H-is-cubic}. The technical framework of the paper allows the $g_C$ term, as in \cite{Akh12}. We choose to omit it for simplicity. However, the argument breaks down unless the ``linear coefficient" terms $g_j\dx^j u$ have the total derivative form of \eqref{Diffusion}. We address the necessity of this below, after stating our main theorem.
\end{rem}
We define
       \begin{align}
         \label{Dispersion}
         \lambda(\dx^3\vu(t)) := \left\| \frac{1}{ {f_{z_3}}(\dx^3\vu(t))} \right\|_{L^\infty_{x}}
       \end{align}

         \begin{rem}\label{Dispersion-nondeg}
         Note that the quantity $\lambda$ measures non-degeneracy of the dispersion in \eqref{generalEquation}. In the case when $\lambda(\dx^3 \vu_0)>0$, we demonstrate in section 5 that this condition remains valid for a small time determined by the size of the solution.
\end{rem}
       \begin{thm}\label{main-thm}
         Suppose $f$ from \eqref{generalEquation} satisfies (A1)--(A3). Let $u_0 \in H^7$, be such that
         \begin{align}\label{Dispersion-data}
           \lambda_0:=\lambda(\dx^3 \vu_0)<\infty \text{  for } \lambda\text{ from }\eqref{Dispersion}.
         \end{align}
         Then there exists $T=T(\norm{u_0}_{H^7},\lambda_0)$, such that \eqref{generalEquation} is wellposed. That is
         \begin{itemize}
           \item There exists a classical solution $u\in C_{[-T,T]}H^{7}$ of \eqref{generalEquation},
           \item This solution $u$ is unique in the class $C_{[-T,T]}H^{7},$ and
           \item The solution $u$ depends on data $u_0\in H^7$ continuously. That is, if $u_{0,n}\to u_0\in H^7$, then the associated solutions $u_n$ satisfy  $u_n \to u \in   C_{[-T,T]}H^{7}$
         \end{itemize}
       \end{thm}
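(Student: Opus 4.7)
The overall plan is to follow the regularize, estimate, pass-to-the-limit scheme that is standard for quasilinear/fully nonlinear dispersive equations, with the heart of the matter being a gauged energy estimate at the $H^7$ level.

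First I would introduce a two-parameter family of regularized problems: replace \eqref{generalEquation} by
\begin{equation*}
u_t = f(\dx^3 \vu,x,t) - \eps \dx^{2N} u, \qquad u\mid_{t=0} = J_\delta u_0,
\end{equation*}
where $J_\delta$ is a standard mollifier and $N$ is chosen so that the added dissipative term dominates the fully nonlinear expression at the top order (for instance $N=4$ or larger). Since $f$ is smooth in its arguments and $f_{z_3}$ is bounded, this is a parabolic equation whose local well-posedness on some interval $[-T_{\eps,\delta},T_{\eps,\delta}]$ in $H^7$ (or higher) follows from a standard contraction argument after writing it as a Duhamel integral equation with the linear heat-type semigroup $e^{-\eps t \dx^{2N}}$. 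This gives a smooth approximate solution $u_{\eps,\delta}$ to work with.

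The core of the proof is to obtain energy estimates for $u_{\eps,\delta}$ that are uniform in $\eps$ (and in $\delta$) over a time interval $[-T,T]$ with $T$ depending only on $\norm{u_0}_{H^7}$ and $\lambda_0$. To do this I would differentiate \eqref{generalEquation} seven times in $x$, keeping careful track of the top-order linearized structure:
\begin{equation*}
\dt(\dx^7 u) = f_{z_3}(\dx^3\vu)\,\dx^{10} u + f_{z_2}(\dx^3\vu)\,\dx^{9} u + \text{l.o.t.} - \eps \dx^{2N+7} u.
\end{equation*}
The $\dx^{10} u$ term is antisymmetric after integration by parts (modulo commutators with $f_{z_3}$), while the $f_{z_2} \dx^9 u$ term is the potentially fatal backwards-diffusion contribution. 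Introduce a gauge $\Phi(x,t)$ and look at $w := e^{-\Phi} \dx^7 u$; a direct computation shows the $\dx^9 u$ term in the energy identity for $\|w\|_{L^2}^2$ can be cancelled provided
\begin{equation*}
2\,\dx \Phi \cdot f_{z_3} = f_{z_2}, \qquad \text{i.e.} \qquad \dx \Phi = \tfrac{1}{2} g_M.
\end{equation*}
This is exactly where (A2) enters: writing $g_M = \dx g_D + g_H$, one takes
\begin{equation*}
\Phi(x,t) = \tfrac{1}{2} g_D(\dx^2 \vu) + \tfrac{1}{2}\int_{-\infty}^x g_H(\dx^3 \vu)(y,t)\,dy,
\end{equation*}
which is bounded in $L^\infty_x$ because $g_D$ is and because $g_H$ vanishes to second order at the origin in $\vz$ by \eqref{g_H-is-cubic}, making $g_H(\dx^3 \vu)$ integrable thanks to the $H^7$ control of $\dx^3 u$ (in fact the vanishing to higher order gives an $L^1_x$ bound that depends only on $\norm{u}_{H^7}$ via Sobolev embedding and Hölder). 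With $\Phi$ so chosen, a Grönwall argument applied to $\sum_{k\le 7}\|e^{-\Phi}\dx^k u\|_{L^2}^2$ yields a uniform $H^7$ bound on a time interval controlled by $\norm{u_0}_{H^7}$ and $\lambda_0$; the parabolic term $-\eps \dx^{2N}u$ contributes nonnegatively to the dissipation and can be discarded.

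Once the uniform estimate is in hand, I pass to the limit in the standard way: first send $\eps \to 0$ for fixed $\delta$ by extracting a weakly-$\ast$ convergent subsequence in $L^\infty_T H^7$ and strongly convergent in $C_T H^{7-\theta}_{\mathrm{loc}}$ via Aubin-Lions, which suffices to take limits in the nonlinearity $f$ thanks to (A1); this yields a solution $u_\delta$ of \eqref{generalEquation} with data $J_\delta u_0$. Then send $\delta \to 0$ by running essentially the same gauged energy estimate on a \emph{difference} $u_\delta - u_{\delta'}$ at one derivative lower (so the top-order fully nonlinear terms linearize and one only needs to control $H^6$ of the difference), obtaining a Cauchy sequence in $C_T H^6$ and hence a solution $u \in C_T H^7$ by lower semicontinuity. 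Uniqueness follows from the same difference estimate applied to two $C_T H^7$ solutions, and continuous dependence follows by combining the difference estimate with the Bona-Smith trick, comparing $u_n$ and $J_{\delta} u_n$ at the $H^6$ level and $J_\delta u_n$ and $J_\delta u$ at the $H^7$ level.

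The main obstacle I expect is controlling all the lower-order commutators in the gauged $H^7$ estimate: when $\dx^7$ lands on $f(\dx^3\vu,x,t)$ via Faà di Bruno, one produces many products of derivatives of $u$ of intermediate order paired with derivatives of $f$, and one must verify that after the gauge change all the problematic derivative-losing terms either telescope against the gauge or are absorbable by Sobolev/Gagliardo-Nirenberg into $\norm{u}_{H^7}$-dependent constants. The choice $H^7$ is exactly what allows $\dx^4 u \in L^\infty$ and $\dx^5 u \in L^\infty$-type estimates needed to close these commutators, and condition (A1) with regularity up to eleven derivatives in $\vz$ is what makes the Faà di Bruno expansion finite and bounded.
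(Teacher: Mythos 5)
Your overall architecture (parabolic regularization plus data mollification, a gauge transformation keyed to condition (A2), Gr\"onwall, and a Bona--Smith-type limit) is the same as the paper's, so the spirit is right. Two items, one minor and one more serious, need attention.

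First, the minor item: the gauge equation $2\,\dx\Phi\cdot f_{z_3}=f_{z_2}$ is not what the energy identity actually produces. When $\dx^7$ is applied to \eqref{generalEquation}, the coefficient of $\dx^9 u$ is $f_{z_2}+7\,\dx f_{z_3}$ (not $f_{z_2}$ alone), and then the integration by parts of the top-order term $(f_{z_3}\dx^{10}u,\dx^7 u)$ contributes a further $\tfrac{3}{2}\dx f_{z_3}$ to the first-order pairing. The correct cancellation condition is a linear ODE of the form $6 f_{z_3}\,\dx\Phi = 3\,\dx f_{z_3} - 2\bigl(f_{z_2}+7\,\dx f_{z_3}\bigr)$ (compare Lemma \ref{gauCh} with $a_3=f_{z_3}$, $a_2=f_{z_2}+n\,\dx f_{z_3}$); the resulting $\Phi$ picks up a $\log f_{z_3}$ term, which is bounded precisely because $\lambda(t)<\infty$ by \eqref{Dispersion-data}, plus a $-\tfrac13\int g_M$ term. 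Your coefficient $\tfrac12$ and the omission of the $\dx\log f_{z_3}$ contribution would cause the naive computation to fail, though the fix is cosmetic.

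Second, and this is the genuine gap: ``Cauchy in $C_T H^6$ and then $u\in C_T H^7$ by lower semicontinuity'' is not correct. Lower semicontinuity of the norm under weak convergence gives only $u\in L^\infty_T H^7$, not continuity in $H^7$ in time. To obtain $u\in C_{[-T,T]}H^7$ and Cauchy-ness of $u^\delta$ in that topology one must run a difference estimate directly at the $H^7$ level, and this requires a \emph{quantitative} control of the higher Sobolev norms of the regularized solutions in terms of $\delta$ --- in the paper, $\norm{u^{\eps,\delta}}_{L^\infty_T H^{11}}\lesssim\delta^{-4}$ (Corollary \ref{coro:remainder}), which is then paired with the mollification rates $\norm{(u_0)_\delta - u_0}_{L^2}=o(\delta^7)$ of Lemma \ref{lem:regularize} and the coupling $\eps=\delta^5$. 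Without this $\delta$-graded bound on $H^{11}$, the most singular forcing terms in the difference equation (e.g.\ $(\eps-\eps')\dx^{11}u^{\eps',\delta'}$ and $(\dx^{10}u^{\eps',\delta'})\bigl(f_{z_3}(\dx^3{\bf u}^{\eps,\delta})-f_{z_3}(\dx^3{\bf u}^{\eps',\delta'})\bigr)$) cannot be made small as $\delta,\delta'\to 0$. You gesture at Bona--Smith for continuous dependence, but the same ingredient is what makes the existence argument land in $C_T H^7$ rather than merely $L^\infty_T H^7$, and your proposal does not supply it. Relatedly, the choice to first send $\eps\to 0$ by Aubin--Lions for fixed $\delta$ and then vary $\delta$ is more delicate than the paper's one-step approach (which keeps $\eps=\delta^5$ throughout), because the nonlinear a priori estimates are proved for solutions with extra regularity ($H^{12}$) that the $\eps\to 0$ limit no longer enjoys.
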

 We return to discuss Remark \ref{coefficients-simplicity} and the sharpness of Theorem \ref{main-thm}. The condition \eqref{Diffusion} on the ``modified diffusion ratio'' was demonstrated \cite{Akh12} to be necessary for the wellposedness of the linear case of \eqref{generalEquation}. A condition of the form \eqref{Diffusion} is likely necessary for the wellposedness in $H^s$. Pilod \cite{Pilod08} has demonstrated that for the evolution equation $\dt u + \dx^3 u + u\dx^2 u=0,$ the flow map
$u_0 \mapsto u$ on $H^{s}$ is not $C^2;$ note that for this equation, $g_M = u$, which is not of the form \eqref{Diffusion}. In future work, we expect to extend these illposedness results to show a lack of continuous dependence on data and hence demonstrate the sharpness of Theorem \ref{main-thm} with respect to the ``modified diffusion ratio."
\subsection{Notation}
When estimating with multiplicative constants, we will often write $A\les_{x,y} B$, to mean $A \le C\cdot B$, where the constant $C= C(x,y)$ may increase from line to line. By an equivalence $A\approx_{x,y} B$, we mean $A \les_{x,y} B \les_{x,y} A$. In most of this work, the constants $C$ will depend on the nonlinear function $f$, i.e. $C=C(f, s,k)$. The functional dependence of the constants on dispersion and the size of data is of paramount importance and will be kept, e.g. $C=C(\lambda(t),\norm{u(t)}_{H^7})$.
\subsection*{Sobolev Spaces}
We will use $L^{2}$-based Sobolev spaces extensively and will define them here for reference. In particular, by $H^s$ we mean the set of tempered distributions $f$, such that $$\norm{f}_{H^s}:=\norm{(1+\abs{\xi}^2)^{\frac{s}{2}} \hat f(\xi)}_{L^2} <\infty.$$ When $s$ is a non-negative integer the Fourier Transform turns derivatives into multiplication, hence $$\norm{f}_{H^s} \approx \norm{f}_{L^2}+\norm{\dx^s f(x)}_{L^2}.$$ 

For $s>\frac{1}{2}$ we often choose a version of $f \in H^s(\R)$ that is in addition a smooth function in $C^{\lfloor s-\frac{1}{2}\rfloor}(\R)$, where $\lfloor \_ \rfloor$ is a lower integer part of a number. Choosing such a version is well-defined by the Sobolev embedding.
\subsection*{Space-time norms}\label{subsec:norms}
  We will use the following space-time norms:
\begin{align*}
& \norm{f(x,t)}_{L_{[t_1,t_2]}^{\infty}L_x^2} \equiv \sup_{t\in {[t_1,t_2]}}
\left(\int_{\R} \abs{f(x,t)}^2dx\right)^{\frac{1}{2}},\\
& \norm{f(x,t)}_{L^1_{[t_1,t_2]} L^2_x} \equiv \int_{[t_1,t_2]}\left(\int_{\R} \abs{f(x,t)}^2dx\right)^{\frac{1}{2}}dt.
\end{align*}
Most of the time $[t_1,t_2]$ will stand for $[0,T]$ or $[-T,0]$ or $[-T,T]$ for some $T>0$.

     \section{Regularizations}

  To prove wellposedness, we will ues two types of regularization -- one on the data, and one on the equation.
  We first regularize the data, then the equation. The solution of \eqref{generalEquation} will be constructed as a limit of such regularized solutions.
\subsection{Data regularization}
  Let $0\le \phi(\xi) \le 1$ be a radial smooth bump function satisfying
     \[
     \phi(\abs{\xi})=      \begin{cases}
      & 1,\text{ if }\abs{\xi} \le 1,\\
       & 0,\text{ if }\abs{\xi} \ge 2.
     \end{cases}
     \]
     For all $0<\delta\le 1$, define $\mathcal{F}\left( (u_{0})_{\delta}\right)(\xi) = \hat u_0(\xi)\cdot \phi(\delta\abs{\xi})$.
     Hence
     \begin{align}\label{regularizationDefinition}
       (u_{0})_{\delta}=\left[u_0\conv \frac{1}{\delta}\check{\phi}\left(\frac{1}{\delta}\cdot\right)\right](x).
     \end{align}

     The structure of $\phi$ allows us to give quantitative bounds on the convergence of $(u_{0})_{\delta}$
     to $u_0$ in $L^2$ and in $H^7$ as ${\delta\to 0}$. To do this, we restate Lemma 4.1 from \cite{Akh13}, as we need the details in the present manuscript.
     \begin{lem}\label{lem:regularize}
   \label{BS} Let $\K \Subset H^{7}$ be a compact set. Then $\forall \delta>0$, and any $u_0 \in \K$, $(u_{0})_{\delta} \in \Schw$ defined above satisfies
   \begin{subequations}
     \begin{align}
      \norm{(u_{0})_{\delta}}_{H^{7+j}} \le C_j\norm{u_0}_{H^7}\delta^{-j},\quad       \text{ for all } j\ge 0,\,\,\,  \label{eq:BS:above}\\
     \norm{(u_{0})_{\delta}-u_0}_{L^2} = o(\delta^{7})\quad \text{ and }\quad
      \norm{(u_{0})_{\delta}-u_0}_{H^{7}} = o(1),\label{eq:BS:L2}
   \end{align}
   \end{subequations}
   with the convergence rate dependent on $\K$.
 \end{lem}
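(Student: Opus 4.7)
The proof is a standard Fourier-side mollification argument followed by an $\varepsilon$-net argument to upgrade pointwise (in $u_0$) convergence to uniform convergence on the compact set $\mathcal{K}$. My plan is to work entirely on the Fourier side using Plancherel and the fact that $\widehat{(u_0)_\delta}(\xi) = \hat u_0(\xi)\phi(\delta|\xi|)$ is supported in $|\xi| \le 2/\delta$ while $1 - \phi(\delta|\xi|)$ is supported in $|\xi| \ge 1/\delta$.

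For the high-regularity bound \eqref{eq:BS:above}, I would write
\begin{align*}
\norm{(u_0)_\delta}_{H^{7+j}}^2 = \int (1+|\xi|^2)^{7+j}|\hat u_0(\xi)|^2|\phi(\delta|\xi|)|^2\,d\xi,
\end{align*}
and on the support $|\xi| \le 2/\delta$ bound $(1+|\xi|^2)^j \le C_j\delta^{-2j}$, pulling this factor out to leave $\norm{u_0}_{H^7}^2$. For the convergence \eqref{eq:BS:L2}, I would again use Plancherel to get
\begin{align*}
\norm{(u_0)_\delta - u_0}_{L^2}^2 = \int_{|\xi|\ge 1/\delta} |\hat u_0(\xi)|^2 |1-\phi(\delta|\xi|)|^2\,d\xi \le \delta^{14}\int_{|\xi|\ge 1/\delta}(1+|\xi|^2)^7|\hat u_0(\xi)|^2\,d\xi,
\end{align*}
using $(1+|\xi|^2)^{-7} \le |\xi|^{-14} \le \delta^{14}$ on the domain of integration. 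The trailing integral tends to zero as $\delta \to 0^+$ for each fixed $u_0 \in H^7$ by dominated convergence, giving the $o(\delta^7)$ rate pointwise; the $H^7$ convergence follows similarly since $(1+|\xi|^2)^7|\hat u_0|^2|1-\phi(\delta|\xi|)|^2$ is dominated by an $L^1$ function and tends to zero pointwise.

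The main (though mild) obstacle is upgrading these pointwise-in-$u_0$ statements to convergence rates \emph{uniform over} $u_0 \in \mathcal{K}$, as asserted by ``the convergence rate dependent on $\mathcal{K}$.'' I would exploit the linearity of the mollification operator, which gives a uniform Lipschitz bound
\begin{align*}
\delta^{-7}\norm{(u_0)_\delta - u_0}_{L^2} - \delta^{-7}\norm{(v_0)_\delta - v_0}_{L^2} \le \delta^{-7}\norm{(u_0-v_0)_\delta - (u_0-v_0)}_{L^2} \le \norm{u_0-v_0}_{H^7},
\end{align*}
and similarly for the $H^7$ error. Given $\eta > 0$, compactness of $\mathcal{K}$ in $H^7$ yields a finite cover by balls of radius $\eta/3$ centered at $v_1,\dots,v_N$. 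Choose $\delta_0$ small enough that the pointwise convergence gives $\delta^{-7}\norm{(v_i)_\delta - v_i}_{L^2} < \eta/3$ for all $i$ and all $0 < \delta < \delta_0$; then for arbitrary $u_0 \in \mathcal{K}$ the triangle inequality and the Lipschitz bound force the same quantity to be less than $\eta$, and likewise for $\norm{(u_0)_\delta - u_0}_{H^7}$. This yields both statements of \eqref{eq:BS:L2} uniformly on $\mathcal{K}$, completing the proof.
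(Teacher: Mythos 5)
Your proof is correct and works on the Fourier side via Plancherel, as does the paper's, but with two noteworthy variations. For the $o(\delta^7)$ estimate you exploit directly that $1-\phi(\delta|\xi|)$ vanishes on $|\xi|\le 1/\delta$ and then insert $(1+|\xi|^2)^{-7}\le|\xi|^{-14}\le\delta^{14}$ on the remaining frequencies; the paper instead runs an iterated Fundamental Theorem of Calculus / Taylor expansion of $\phi$, using $\partial_\xi^j\phi(0)=0$ for $j>0$, to extract the same $\delta^{14}$ factor and the same tail integral $\int_{|\xi|\ge 1/\delta}$. Your route is slightly more direct, but the resulting bound is identical. The more substantive difference is the uniformity over $\mathcal{K}$: the paper only gestures at the Lebesgue Dominated Convergence Theorem, which gives convergence for each fixed $u_0$ but by itself does not yield a rate uniform over $\mathcal{K}$ — one needs some equicontinuity or compactness input (e.g.\ the fact that compact subsets of $H^7$ have uniformly small high-frequency tails). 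Your $\varepsilon$-net argument, driven by the linear Lipschitz estimate $\delta^{-7}\norm{(w)_\delta - w}_{L^2}\le\norm{w}_{H^7}$, supplies exactly that missing step and makes the ``convergence rate dependent on $\mathcal{K}$'' clause fully rigorous, which is a genuine improvement in explicitness over what the paper records.
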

\begin{proof}
     Let $j\ge 0$ and $0<\delta <1$ be given.  We calculate as follows:
     \begin{align}\label{BSP1}
     \begin{split}
       & \norm{(u_{0})_{\delta}}_{H^{7+j}}^2  = \int_{\abs{\xi} \le \frac{2}{\delta}} (1+\abs{\xi}^2)^{7}\abs{\hat u_0(\xi) }^2 (1+\abs{\xi}^2)^j \phi(\delta\abs{\xi})^2 d\xi \le  \left(\frac{3}{\delta}\right)^{2j} \norm{u_0}_{H^{7}}^2.
     \end{split}
     \end{align}
    This proves \eqref{eq:BS:above}.
      For \eqref{eq:BS:L2} it suffices to show the first estimate, with the second done identically. By the Fundamental Theorem of Calculus, we have
     \begin{align*}
      & \norm{(u_{0})_{\delta}-u_0}_{L^2}^2 = \int (\phi(\delta\abs{\xi})-1)^2\abs{\hat u_0(\xi)}^2 d\xi \le \int \left(\sup_{\eta \in B_{\delta\abs{\xi}}(0)}\abs{\phi'(\eta)}\right)^2 \delta^2\abs{\xi\hat u_0}^2 d\xi,
     \end{align*}
     where we have
     used $\phi(0)=1$ and defined $B_{\delta\abs{\xi}}(0) = [-\delta\abs{\xi},\delta\abs{\xi}]$. As $\dxi^j\phi(0)=0$ for all $j>0$, we can continue with the Taylor expansion of $\abs{\phi^{(j)}(\eta)}$ seven times and then use $\dxi^j\phi\equiv 0$ on $B_1(0)$ for $j>0$ to conclude
     \begin{align*}
       & \norm{(u_{0})_{\delta}-u_0}_{L^2}^2  \le \delta^{14} \norm{\dxi^{7} \phi}_{\linf}^2\int_{\abs{\xi}\ge \frac{1}{\delta}} \abs{\xi^{7}\hat u_0}^2 d\xi.
     \end{align*}
The $o(1)$ rate that is uniform for $u_0$ in a compact set $\K$ comes by the Lebesgue Dominated Convergence Theorem.
\end{proof}
\subsection{Parabolic regularization of the main equation}
 We now add both the data regularization and the parabolic regularization to \eqref{generalEquation}:
  \begin{align}\label{parab:non}
  \begin{cases}
    \dt\ue + f(\dx^3\ue,\ldots,\ue, x,t) = -\eps\dx^4 \ue,\\
    \ue(x,0)=(u_{0})_{\delta}(x).
  \end{cases}
\end{align}
Note that for $\eps = 0$, this equation is \eqref{generalEquation}. Whereas for $\eps>0$, this equation is a semilinear parabolic equation and is well-understood. We quote the following wellposedness result.
    \begin{prop}\label{parab:prop}
      Suppose $(u_{0})_{\delta}\in H^s$ for $s\ge 4$ and let $\eps>0$ be given. There exists a maximal interval of existence $[0,T_\eps)$, such that  $T_\eps \ge 1/C(\frac{1}{\eps},\norm{u_{0,\eps}}_{H^4},s)$, so that \eqref{parab:non} is well-posed in $H^s.$ In particular, \eqref{parab:non} has a unique solution in $C_{[0,T_\eps)}H^s\cap C^1_{[0,T_\eps)} H^{s-4}\cap L^2_{[0,T_\eps)}H^{s+2}_x$ and this solution $\ue$ depends continuously on data, i.e. $((u_{0})_{\delta})\mapsto \ue(t)$ is a continuous map from $H^s\to H^s$. Moreover, if the maximal time of existence satisfies $T_\eps <\infty$, then \eqref{parab:non} has the following blow up criterion:
      \begin{align}\label{parabolic-blow-up}
        \lim_{t\nearrow T_\eps} \norm{\ue(t)}_{H^4} = \infty.
      \end{align}
    \end{prop}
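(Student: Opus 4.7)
The plan is to treat \eqref{parab:non} as a semilinear problem governed by the smoothing semigroup $S(t) := e^{-\eps t \dx^4}$, with $f$ serving as a lower-order perturbation (it involves at most three derivatives, while the linear generator involves four). Writing the equation in Duhamel form
\begin{align*}
\ue(t) = S(t) (u_0)_\delta - \int_0^t S(t-\tau) f(\dx^3\ue,\ldots,\ue,x,\tau)\, d\tau =: \Phi[\ue](t),
\end{align*}
I will set up a Banach contraction on the ball $X_{T_0} := \{v \in C([0,T_0];H^s) : \norm{v - S(\cdot)(u_0)_\delta}_{L^\infty_{T_0} H^s} \le \norm{(u_0)_\delta}_{H^s}\}$, for a $T_0$ small depending on $\eps^{-1}$, $\norm{(u_0)_\delta}_{H^4}$, $s$.

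The two key ingredients are the parabolic smoothing estimate (a direct Fourier computation from the multiplier $e^{-\eps t \xi^4}$)
\begin{align*}
\norm{\dx^k S(t) v}_{L^2} \les (\eps t)^{-k/4} \norm{v}_{L^2}, \qquad k \ge 0,
\end{align*}
and the Moser-type composition estimate, which uses (A1), (A3) and the embedding $H^{s-3} \hookrightarrow L^\infty$ (valid for $s \ge 4$):
\begin{align*}
\norm{f(\dx^3 v,\ldots,v,x,\tau)}_{H^{s-3}} \le C(\norm{v}_{H^s}) \norm{v}_{H^s},
\end{align*}
together with its Lipschitz analogue for differences. Combining these, one obtains
\begin{align*}
\norm{\Phi[v](t) - S(t)(u_0)_\delta}_{H^s} \les \int_0^t (\eps(t-\tau))^{-3/4} C(\norm{v}_{H^s})\norm{v}_{H^s}\, d\tau \les T_0^{1/4}\, \eps^{-3/4}\, C(\norm{(u_0)_\delta}_{H^s}),
\end{align*}
and similarly $\norm{\Phi[v_1] - \Phi[v_2]}_{L^\infty_{T_0} H^s} \le \tfrac{1}{2}\norm{v_1 - v_2}_{L^\infty_{T_0} H^s}$ for $T_0$ sufficiently small. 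The Banach fixed-point theorem then yields a unique local solution $\ue \in C([0,T_0];H^s)$, and the contraction structure gives Lipschitz continuous dependence on data. The additional regularities $\ue \in L^2_{[0,T_\eps)} H^{s+2}_x$ and $\ue \in C^1_{[0,T_\eps)} H^{s-4}$ follow, respectively, from the parabolic energy identity obtained by testing $\dx^s \ue$ against itself (which absorbs an $\eps\norm{\dx^{s+2}\ue}_{L^2}^2$ dissipation term) and from reading off $\dt\ue = -f(\dx^3\ue,\ldots) - \eps\dx^4\ue$ in $H^{s-4}$.

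The main obstacle is the blow-up criterion \eqref{parabolic-blow-up}, which asserts that the \emph{single} low norm $\norm{\ue(t)}_{H^4}$ controls extendability at every higher regularity $H^s$. Suppose, for contradiction, that $T_\eps < \infty$ yet $\limsup_{t \nearrow T_\eps}\norm{\ue(t)}_{H^4} < \infty$. I will perform an $H^s$ energy estimate by testing $\dx^s$ applied to \eqref{parab:non} against $\dx^s\ue$. The parabolic term produces $\eps\norm{\dx^{s+2}\ue}_{L^2}^2$ dissipation. The delicate nonlinear contribution is $\int f_{z_3}(\dx^3\ue,\ldots)\,\dx^{s+3}\ue \cdot \dx^s\ue\, dx$; integrating by parts twice transfers the top derivative into $\dx^{s+2}\ue$ at the cost of one spatial derivative of $f_{z_3}$, which depends on $\dx^4\ue$ and hence is controlled in $L^\infty$ by $\norm{\ue}_{H^4}$ via Sobolev embedding. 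Cauchy--Schwarz then absorbs the resulting $\norm{\dx^{s+2}\ue}_{L^2}$ into the parabolic dissipation at cost $\eps^{-1}$. All lower-order terms arising from applying $\dx^s$ to the composition $f(\dx^3\ue,\ldots)$ are bounded, via Fa\`a di Bruno and Gagliardo--Nirenberg interpolation, by products of powers of $\norm{\ue}_{H^4}$ with $\norm{\ue}_{H^s}^2$. The resulting differential inequality $\tfrac{d}{dt}\norm{\ue}_{H^s}^2 \le C(\eps^{-1},\norm{\ue}_{H^4})\norm{\ue}_{H^s}^2$ combined with Gronwall's lemma yields a uniform bound on $\norm{\ue(t)}_{H^s}$ over $[0,T_\eps)$; applying the local existence theorem from a time close to $T_\eps$ then extends the solution past $T_\eps$, contradicting maximality.
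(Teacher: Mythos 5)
Your overall outline — Duhamel with the semigroup $S(t)=e^{-\eps t\partial_x^4}$, contraction mapping using the $3/4$-gain of the parabolic smoothing estimate, and a persistence-of-regularity energy estimate to obtain the $H^4$ blow-up criterion — is precisely the ``standard semilinear parabolic'' route the paper's sketch points to. However, the step that proves the blow-up criterion contains a concrete error. You claim that $\partial_x\bigl[f_{z_3}(\partial_x^3\mathbf{u},\ldots)\bigr]$ ``depends on $\partial_x^4 u$ and hence is controlled in $L^\infty$ by $\|u\|_{H^4}$ via Sobolev embedding.'' That is false: on $\mathbb{R}$, $H^4\hookrightarrow W^{3,\infty}$ (since $4-3>1/2$) but not $H^4\hookrightarrow W^{4,\infty}$, so the chain-rule term $f_{z_3z_3}\,\partial_x^4 u$ inside $\partial_x[f_{z_3}]$ is controlled in $L^2$ by $\|u\|_{H^4}$ but not in $L^\infty$. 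As written, the constant in your differential inequality $\frac{d}{dt}\|u\|_{H^s}^2\le C(\eps^{-1},\|u\|_{H^4})\|u\|_{H^s}^2$ is not actually a function of $\|u\|_{H^4}$ alone, which is the whole content of the blow-up criterion.

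The gap is repairable, but one must split $\partial_x[f_{z_3}]$ into a part depending on at most $\partial_x^3 u$ (controlled in $L^\infty$ by $\|u\|_{H^4}$) and the $f_{z_3z_3}\partial_x^4 u$ part, place the latter in $L^2$, and then use Gagliardo--Nirenberg to interpolate the remaining $\partial_x^{s+1}u$ and $\partial_x^{s+2}u$ factors so that Young's inequality absorbs them into the $\eps\|\partial_x^{s+2}u\|_{L^2}^2$ dissipation at the cost of $C(\eps^{-1},\|u\|_{H^4})\|\partial_x^s u\|_{L^2}^2$. A related but lesser imprecision: your Moser estimate $\|f(\partial_x^3 v,\ldots)\|_{H^{s-3}}\le C(\|v\|_{H^s})\|v\|_{H^s}$ is the non-tame form, so the contraction time it produces depends on $\|(u_0)_\delta\|_{H^s}$, not $\|(u_0)_\delta\|_{H^4}$ as the proposition asserts; you should use the tame form $C(\|v\|_{W^{3,\infty}})\|v\|_{H^s}\le C(\|v\|_{H^4})\|v\|_{H^s}$, or else obtain the $H^4$-dependence of $T_\eps$ only a posteriori from the (corrected) blow-up criterion together with an $H^4$ a priori bound.
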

    \begin{proof}[Sketch of proof]
     The proof of the proposition follow the outline of a standard semilinear parabolic problem, e.g. \cite{Akh13}. The linear semi-group gains $3$ derivatives in $L^2$-based spaces and allows the $f(\dx^3\vue)$ terms to be treated as a lower order perturbation for a contraction mapping argument.
    \end{proof}

\subsection{Main functional norms}
We define
  \begin{align}
    M_\eps(t) = \norm{\ue}_{L^\infty_t H^7}+\eps\norm{\ue}_{L^\infty_t H^8},\label{high-norm}\\
    k(t):= \sup_{0\le t'\le t}\left\{ \norm{{\ue}(t')}_{H^4};\lambda(\dx^3{\bf\ue}(t'))\right\}. \label{dispersion+low-norm}
  \end{align}
  The main technical ingredient of the paper is the simultaneous control of the high Sobolev norm $M_\eps$ and this function $k(t)$, which measures dispersion and low Sobolev norms of the solution.
\begin{rem}
  \label{M0}
  Note that upon combining Lemma \ref{BS} with the definition of $M_\eps(t)$ we obtain
  \begin{align*}
    M_\eps(0)\le \norm{u_0}_{H^7}(1+ \eps C\delta^{-1}).
  \end{align*}
  Thus we can ensure that there exists $0<\delta_0\ll 1$ such that
  \begin{align}\label{u0-M}
    M_\eps(0) \approx \norm{u_0}_{H^7} \text{ for } \eps \le \delta^2 \text{, for $\delta\le \delta_0.$}
  \end{align}
\end{rem}
  \section{A Linear Estimate}

  In this section, we develop an estimate for solutions of a linear problem.  In the subsequent sections, we will
  prove estimates for the nonlinear problem by applying this linear estimate.  The linear problem we consider now is
  as follows:

     \begin{align}\label{lin}
\begin{cases}
  \dt  w + \sum_{j=0}^3 a_j(x,t)\dx^{j} w =  -\eps \dx^4 w + h,\\
  w(0,x) \equiv w_0(x).
\end{cases}
\end{align}

 Let    \begin{align}
  \begin{split}
  & k_G(t): = \left\|\int_0^x \frac{a_2}{a_3}(x',t)\ dx'\right\|_{L^\infty_x}
  +\left\|\frac{1}{a_3}(t)\right\|_{L^\infty_x}
  +\left\|a_3(t)\right\|_{L^\infty_{x}},\\
    \label{linear-coeff}
    & \tld M(t): = \sup_{0\le t'\le t}\left(\sum_{i=0}^3 \norm{a_i(t')}_{L^\infty_T W^{i,\infty}_x}
    +\left\|\frac{1}{a_3(t')}\right\|_{L^\infty_{x}}
    + \left\|\int_0^x \frac{a_2(y,t)}{{a_3(y,t)}}\ dy\right\|_{L^\infty_{x}}\right)\\
 & +\sup_{0\le t'\le t}\left(
 \left\|\dt \int_0^x \frac{a_2(y,t')}{{a_3(y,t')}}\ dy\right\|_{L^\infty_{x}}
 + \norm{\dt a_3(t')}_{L^\infty_{x}}\right).
  \end{split}
\end{align}

The following theorem is the main result of the present section:
    \begin{thm}\label{thm:lin}
    Assume $k_G(t)$ and $\tld M(t)$ are bounded for $t\in [0,T]$ for some $T>0$, and $h\in L^1_tL^2_x$. Then any classical solution $w$ of \eqref{lin} with $w\in C^1_t H^{-2} \cap C^0_t H^3$ satisfies
      \begin{align}\label{lin:ref}
        \norm{w(t)}_{L^2}+\eps\norm{w}_{L^2_t H^2_x}
        \le C(k_G(t))\exp\left(\int_0^t C(\tld M(t')) dt'\right)\left( \norm{w_0}_{L^2} + \norm{h}_{L^1_t L^2}\right).
      \end{align}

    \end{thm}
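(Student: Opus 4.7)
The plan is to combine a Mizohata-type multiplicative gauge with a weighted $L^2$ energy to obtain a bound uniform in $\eps$. Define
\[
\psi(x,t):=\tfrac{1}{3}\int_0^x \frac{a_2(y,t)}{a_3(y,t)}\,dy, \qquad v := e^{\psi} w.
\]
Since the first term in $k_G(t)$ bounds $\norm{\psi}_{L^\infty_x}$, the $L^2$ norms of $v$ and $w$ differ only by a factor $e^{\norm{\psi}_{L^\infty}}\le C(k_G)$. Substituting $w=e^{-\psi}v$ into \eqref{lin} and expanding $\dx^j(e^{-\psi}v)$ for $j=2,3,4$, the coefficient of $\dx^2 v$ in the resulting PDE for $v$ is $a_2-3a_3\psi_x$, which vanishes by construction. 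The gauged equation reads
\[
\dt v + a_3\,\dx^3 v + \eps\,\dx^4 v - 4\eps\psi_x\,\dx^3 v + 6\eps(\psi_x^2-\psi_{xx})\,\dx^2 v + \widetilde b_1\,\dx v + \widetilde b_0\,v = e^{\psi}h,
\]
where $\widetilde b_0,\widetilde b_1$ incorporate $-\psi_t$ (from $\dt(e^{-\psi})$), up to two spatial derivatives of $\psi$, and derivatives of $a_0,\dots,a_3$ of order at most $3$. Each of these quantities is bounded by $\tld M(t)$; in particular $\psi_t=\tfrac13 \dt\!\int_0^x a_2/a_3\,dy$ is exactly the quantity in the second line of \eqref{linear-coeff}.

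Next I pair the gauged equation with the weighted test function $v/a_3$ and integrate. The hypothesis $\norm{1/a_3}_{L^\infty}<\infty$ forces the continuous function $a_3$ to have a definite sign; without loss of generality $a_3>0$, so the weighted energy $E(t):=\int_{\R}v^2/a_3\,dx$ satisfies $E(t)\approx\norm{v}_{L^2}^2\approx\norm{w}_{L^2}^2$ with constants $C(k_G)$. The key cancellation is
\[
-2\int \tfrac{v}{a_3}\cdot a_3\,\dx^3 v\,dx = -2\int v\,\dx^3 v\,dx = 0,
\]
which sidesteps the term $-3\int \dx a_3\,(\dx v)^2\,dx$ produced by an unweighted $L^2$ energy---a term that cannot be absorbed into $\eps\norm{\dx^2 v}_{L^2}^2$ without a constant of order $1/\eps$. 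The parabolic contribution $-2\eps\int(v/a_3)\dx^4 v\,dx$ integrates by parts to $-2\eps\int(\dx^2 v)^2/a_3\,dx$ plus lower-order corrections involving derivatives of $1/a_3$, and these are handled via Gagliardo--Nirenberg $\norm{\dx v}_{L^2}^2\le \norm{v}_{L^2}\norm{\dx^2 v}_{L^2}$ combined with Young's inequality. The $\eps$-corrections $-4\eps\psi_x\dx^3 v$ and $6\eps(\psi_x^2-\psi_{xx})\dx^2 v$ succumb to the same mechanism: one or two integrations by parts against $v/a_3$ convert them into quantities of the form $\eps\int(\mathrm{coeff})(\dx v)^2\,dx$, which interpolation absorbs into a fraction of the good dissipation. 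The $\widetilde b_0 v$, $\widetilde b_1\dx v$, and $\dt(1/a_3)v^2$ contributions each give $C(\tld M)E$, and the source contributes $C(k_G)E^{1/2}\norm{h}_{L^2}$.

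Collecting these bounds yields a differential inequality
\[
\tfrac{d}{dt}E + \eps\norm{\dx^2 v}_{L^2}^2 \le C(\tld M(t))\,E + C(k_G(t))\,E^{1/2}\,\norm{h}_{L^2},
\]
and Gronwall's inequality---applied to $E^{1/2}$, or equivalently via an integrating factor---gives a uniform bound on $E(t)+\eps\int_0^t\norm{\dx^2 v}_{L^2}^2\,ds$; converting back from $v$ to $w$ by the equivalence of norms produces \eqref{lin:ref}. The main technical obstacle is the coordination of the gauge with the $\eps$-parabolic regularization: the corrections produced when $e^{-\psi}$ is differentiated four times must be controlled uniformly in $\eps$, which Gagliardo--Nirenberg interpolation delivers. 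A secondary but nontrivial task is verifying that every coefficient emerging after conjugation and weighting matches one of the quantities explicitly bounded by $k_G$ or $\tld M$---the presence of $\dt\!\int_0^x a_2/a_3$ and $\dt a_3$ in $\tld M$ is precisely what makes this possible.
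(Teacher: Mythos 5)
Your proof follows essentially the same strategy as the paper's: a Mizohata-type exponential gauge kills the $\dx^{2}$ coefficient, a weight containing $1/a_3$ kills the residual $\dx a_3\,(\dx v)^2$ contribution, the $\eps$-corrections are absorbed by interpolation, and Gronwall finishes. The paper folds the factor $\sqrt{a_3}$ into the gauge $\phi$ (Lemma \ref{gauCh}) and pairs with $v$ in plain $L^2$, while you keep the simpler gauge $e^{\psi}$ and pair with $v/a_3$; after substitution the two choices produce the identical weighted integrand (up to the harmless time-dependent factor $a_3(0,t)$), so this is an organizational rather than substantive difference.

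One bookkeeping point does need attention. When $\eps\dx^4(e^{-\psi}v)$ is expanded, corrections appear not only at orders $\dx^3 v$ and $\dx^2 v$ but also at orders $\dx v$ and $v$, carrying $\psi_{xxx}$ and $\psi_{xxxx}$ respectively; your sketch records only the former and asserts that $\widetilde{b}_0,\widetilde{b}_1$ involve at most two $x$-derivatives of $\psi$, which cannot hold once these $\eps$-terms are absorbed into them. In particular the contribution $\eps\int \psi_{xxxx}\,v^2/a_3\,dx$, estimated directly, would require $a_2\in W^{3,\infty}_x$, and $\tld M$ controls only $\norm{a_2}_{W^{2,\infty}}$. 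The repair is either to integrate by parts once more to transfer the outermost derivative from $\psi_{xxxx}$ onto $v^2/a_3$, leaving only $\psi_{xxx}$ (which $\tld M$ does control via $\norm{a_2}_{W^{2,\infty}}$ and $\norm{a_3}_{W^{3,\infty}}$), or---as Lemma \ref{energy:parab} does---to pair $\eps\dx^4 w$ directly against the composite weight $\phi^{-2}w$ and integrate by parts only twice, so that at most two derivatives of the gauge ever arise. Either fix is routine, but as written the regularity claim for $\widetilde{b}_0,\widetilde{b}_1$ is not quite correct.
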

    In \cite{Akh12} the first author established a similar linear estimate with a constant $C(\norm{\tld M(t)}_{L^\infty_t})$ without the exponential. However, the $H^7$ wellposedness we are proving in Theorem \ref{main-thm}, as compared to
    the $H^{12}$ result
    of \cite{Akh12}, demands more delicate accounting of constants than the one given in \cite{Akh12}. In particular, constant dependence on $\ k_G(t)$ and $\tld M(t)$ is done separately, which was not the case in \cite{Akh12}. We also confirm that the additional $-\dx^4$ term is harmless for energy estimates.\\

 The proof is based on the energy method, attempting to estimate $\dt \norm{w}_{L^2}^2$ by $\norm{w}_{L^2}^2$. As this method does not apply directly we modify the solution $w$ first.
    \begin{defn}\label{gauge}
  A function $\phi\in C^0_{[0,T]}W^{3,\infty}_x\cap C^1_{[0,T]}L^\infty$ is called a \textbf{gauge}, if the following bounds hold:
  \begin{align}
     & \norm{\phi(t)}_{L^\infty_x}+\left\|{\frac{1}{\phi(t)}}\right\|_{L^\infty_x} \le C(k_G(t)),\label{gauge-below}\\
  &\norm{\phi}_{W^{3,\infty}}+ \norm{\dt \phi}_{L^\infty_T} \le C(\tld M(t)),\label{guage-der}
  \end{align}
 with $k_G(t)$ and $\tld M(t)$ as defined in \eqref{linear-coeff}.
\end{defn}
     Given a gauge, $\phi,$ we define
\begin{align}\label{gauged:var}
 v= \phi^{-1} w.
\end{align}
A substitution of $v$ into \eqref{lin} gives:
\begin{align}\label{vt}
  \begin{cases}
    \dt v + L_\phi v = \phi^{-1}h -\eps \phi^{-1}\dx^4(\phi v),\\%
    v(x,0)= \phi^{-1}u_0,
  \end{cases}
\end{align}
where
\begin{multline}\label{LphiDefn}
L_\phi = a_3\dx^3 +\left(a_2 + \phi^{-1}{3 a_3\dx \phi}\right)\dx^2  +\left(a_1+ \phi^{-1}({2 a_2\dx \phi+ 3 a_3\dx^2\phi})\right)\dx\\
  + \left(a_0 +{\phi^{-1}}({\dt \phi+a_1\dx \phi+a_2\dx^2\phi+a_3\dx^3 \phi})\right).
\end{multline}

\begin{lem}\label{comparability}
Let $\alpha$ and $\beta$ be related by \eqref{gauged:var}, e.g. $\alpha=\phi^{-1}\beta$. Then $\alpha \in C^1_{[0,T]}H^{-2}\cap C^0_{[0,T]}H^{2}$ if and only if $\beta \in C^1_{[0,T]}H^{-2}\cap C^0_{[0,T]}H^{2}$ with comparability constants of the form $C(\tld M(t))$. Moreover,
\begin{align}\label{v}
  \norm{\beta}_{L^\infty_T L^2_x} \approx_{k_G(t)} \norm{\alpha}_{L^\infty_T L^2_x}.
\end{align}
\end{lem}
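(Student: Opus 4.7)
The plan is to show that multiplication by $\phi$ and by $\phi^{-1}$ is a bounded operator on $L^2$, $H^2$, and (by duality) on $H^{-2}$, with operator norms determined by the gauge bounds \eqref{gauge-below} and \eqref{guage-der}, and then to upgrade these static bounds to statements about the time-dependent function spaces.

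First I would handle \eqref{v}, since it is the cleanest piece. By \eqref{gauge-below}, $|\phi|$ and $|\phi^{-1}|$ are bounded pointwise by $C(k_G(t))$, so the pointwise identities $|\alpha|=|\phi^{-1}||\beta|$ and $|\beta|=|\phi||\alpha|$ integrated in $x$ and supremized in $t$ give the two-sided $L^\infty_T L^2_x$ comparability with constant depending only on $k_G$.

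Next, for the $C^0_{[0,T]}H^{2}$ equivalence I would use the product rule
\[
\dx^2(\phi\alpha)=(\dx^2\phi)\alpha+2(\dx\phi)(\dx\alpha)+\phi\,\dx^2\alpha,
\]
and estimate each term in $L^2_x$ by $\norm{\phi}_{W^{2,\infty}_x}\norm{\alpha}_{H^2_x}$, which is $\le C(\tld M(t))\norm{\alpha}_{H^2_x}$ by \eqref{guage-der}. For the reverse inequality I would verify that $\phi^{-1}$ also lies in $W^{2,\infty}_x$ with the same type of bound: the formulas $\dx(\phi^{-1})=-\phi^{-2}\dx\phi$ and $\dx^2(\phi^{-1})=2\phi^{-3}(\dx\phi)^2-\phi^{-2}\dx^2\phi$ show that $\norm{\phi^{-1}}_{W^{2,\infty}_x}$ is controlled by a polynomial expression in $\norm{\phi^{-1}}_{L^\infty_x}$ and $\norm{\phi}_{W^{2,\infty}_x}$, hence by $C(\tld M(t))$ after absorbing the $k_G$-type constant. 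Continuity in $t$ of $\phi\alpha$ valued in $H^2$ follows from $\phi\in C^0_{[0,T]}W^{3,\infty}_x$ and $\alpha\in C^0_{[0,T]}H^2_x$ via the same product rule applied to increments $\phi(t)\alpha(t)-\phi(s)\alpha(s)=(\phi(t)-\phi(s))\alpha(t)+\phi(s)(\alpha(t)-\alpha(s))$.

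For the $C^1_{[0,T]}H^{-2}$ equivalence I would pass to the dual. Since multiplication by $\phi$ is self-adjoint in the $L^2$ pairing, for any test function $\psi\in H^2$ we have $\form{\phi\alpha}{\psi}=\form{\alpha}{\phi\psi}$, so
\[
\norm{\phi\alpha}_{H^{-2}}=\sup_{\norm{\psi}_{H^2}\le 1}|\form{\alpha}{\phi\psi}|\le \norm{\alpha}_{H^{-2}}\sup_{\norm{\psi}_{H^2}\le 1}\norm{\phi\psi}_{H^2}\le C(\tld M(t))\norm{\alpha}_{H^{-2}},
\]
and the same argument with $\phi^{-1}$ in place of $\phi$ gives the reverse bound. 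To obtain $\dt(\phi\alpha)\in C^0_{[0,T]}H^{-2}$ I would write $\dt(\phi\alpha)=(\dt\phi)\alpha+\phi\,\dt\alpha$: the first term lies in $C^0_{[0,T]}L^2\subset C^0_{[0,T]}H^{-2}$ because $\dt\phi\in L^\infty$ with bound $C(\tld M)$ by \eqref{guage-der} and $\alpha\in C^0_{[0,T]}H^2\subset C^0_{[0,T]}L^2$, while the second term is handled by the multiplier bound just established together with $\dt\alpha\in C^0_{[0,T]}H^{-2}$. The same manipulation with $\phi^{-1}$ produces the reverse direction.

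The main technical step is the $H^{-2}$ comparability, which requires the duality trick and forces one to keep track of which constants are $C(k_G)$ (from pointwise bounds on $\phi^{\pm 1}$) and which are $C(\tld M)$ (from bounds on derivatives of $\phi$ and on $\dt\phi$). Everything else reduces to bookkeeping with the product rule and the fact that $W^{2,\infty}_x$ multipliers are bounded on $H^{\pm 2}_x$.
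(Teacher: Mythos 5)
Your proof is correct and follows essentially the same route as the paper: pointwise gauge bounds for the $L^2$ equivalence, the Leibniz rule for $H^2$, duality for $H^{-2}$, and the decomposition $\dt(\phi\alpha)=(\dt\phi)\alpha+\phi\,\dt\alpha$ together with $L^2\hookrightarrow H^{-2}$ for the time derivative. You are slightly more explicit than the paper in two small places (writing out the derivatives of $\phi^{-1}$ and verifying time continuity via increments), but these are just filled-in details, not a different argument.
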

\begin{proof}
 From \eqref{gauged:var} and \eqref{gauge-below} we immediately conclude \eqref{v}. Similarly, differentiating \eqref{gauged:var} twice, we find
  \begin{align*}
    \norm{\beta}_{H^2}\approx_{\norm{\phi}_{W^{2,\infty}},\norm{\frac{1}{\phi}}_{L^\infty}} \norm{\alpha}_{H^2}.
  \end{align*}
   We use these observations to justify comparability of norms for $H^{-2}$ using duality, where we apply the estimate above to a test function $\gamma=\phi^{-1}\cdot (\phi\gamma)$:
  \begin{equation*}
    \norm{\beta}_{H^{-2}} = \sup_{\norm{\gamma}_{H^2}\le 1}
    \left| \int \phi \alpha \cdot \gamma \right|
    \le \sup_{\norm{\gamma}_{H^2}\le 1} \norm{\alpha}_{H^{-2}}C(\norm{\phi}_{W^{2,\infty}})\norm{\gamma}_{H^2} \les_{\norm{\phi}_{W^{2,\infty}}} \norm{\alpha}_{H^{-2}}.
  \end{equation*}
  Similarly, $$\norm{\alpha}_{H^{-2}}\les_{\norm{\phi}_{W^{2,\infty}},\norm{\frac{1}{\phi}}} \norm{\beta}_{H^{-2}}.$$
  Finally, using the fact that $L^2\subseteq H^{-2},$ we have
  \begin{multline}\nonumber
     \norm{\dt \beta}_{H^{-2}} \le \norm{\dt \phi \cdot \alpha}_{H^{-2}} + \norm{\phi \cdot \dt \alpha}_{H^{-2}}\\
     \le \norm{\dt \phi \cdot \alpha}_{L^2} + \norm{\phi \cdot \dt \alpha}_{H^{-2}} %
     \le C(\tld M(t))(\norm{\alpha}_{L^2}+\norm{\dt \alpha}_{H^{-2}}).
  \end{multline}
  Hence $\norm{\dt \beta}_{H^{-2}}+\norm{\beta}_{H^2} \les_{\tld M(t)}\norm{\dt \alpha}_{H^{-2}}+\norm{\alpha}_{H^2}$ and similarly for the other comparability direction.
\end{proof}
\begin{rem}\label{rem:gauge}
Observe, that by Lemma \ref{comparability} applied to $v$ and $w$ from \eqref{gauged:var}, the proof of Theorem \ref{thm:lin} is reduced to \eqref{lin:ref} for $v$ satisfying \eqref{vt}.
\end{rem}

The energy method involves multiplying \eqref{vt} by $v$ to estimate $\dt \norm{v}_{L^2}^2$ by $\norm{v}_{L^2}^2.$
We begin as follows:
\begin{align}\label{v:energy}
  \dt \int \abs{v}^2 = - 2 ( L_\phi v,v) - ( \eps \dx^4(\phi v)-h, \phi^{-1}v),
\end{align}
    where $(u,v)$ is an $L^2_x$ pairing. We quote the following integration by parts argument:

    \begin{lem}\label{enrg}[Lemma 3.5 from \cite{Akh12}:]
  Consider an operator $L=b_3\dx^3 + b_2\dx^2 + b_1\dx +b_0$, where $b_j \in L^\infty_{[0,t]} W^{j,\infty}_x$. Define $c_0=b_0-\frac{1}{2}(\dx b_1-\dx^2 b_2 +\dx^3 b_3)$. Then for any $v\in C^0_{[0,T]}H^{3}$
  \begin{align*}
     (Lv,v) = \left(\left[-b_2+\frac{3}{2} \dx b_3\right]\dx v, \dx v\right) + (c_0 v,v)
  \end{align*}
\end{lem}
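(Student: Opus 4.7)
The plan is to expand $(Lv,v)=\sum_{j=0}^{3}(b_j\dx^j v,v)$ and handle each of the four terms by repeated integration by parts, symmetrizing each summand into a pairing of $v$ with itself or of $\dx v$ with itself, while collecting all of the arising derivatives on the coefficients $b_j$. Since $v\in H^3(\R)$, the Sobolev embedding $H^1(\R)\hookrightarrow C_0(\R)$ gives $v,\dx v,\dx^2 v\to 0$ at infinity, and the hypothesis $b_j\in W^{j,\infty}_x$ ensures that every integrand produced by moving a derivative is in $L^1$. So all boundary contributions vanish; if one wants to be pedantic, approximate $v$ by Schwartz functions and pass to the limit using the continuity of each bilinear form on $H^3\times L^2$ (or $H^2\times H^1$) under the $W^{j,\infty}$ bounds on $b_j$.

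The term-by-term bookkeeping is routine. The $j=0$ term is untouched and will contribute $(b_0v,v)$ to the $c_0$ piece. The $j=1$ term gives $(b_1\dx v,v)=\tfrac{1}{2}\int b_1\dx(v^2)\,dx=-\tfrac{1}{2}(\dx b_1\,v,v)$. For the $j=2$ term, one integration by parts yields $-(b_2\dx v,\dx v)-(\dx b_2\,v,\dx v)$, and the latter piece rewrites as $\tfrac{1}{2}(\dx^2 b_2\,v,v)$. The $j=3$ term is the genuinely involved one: integrate once to obtain $-(\dx b_3\,v,\dx^2 v)-(b_3\dx v,\dx^2 v)$; in the second, use $(b_3\dx v,\dx^2 v)=\tfrac{1}{2}\int b_3\dx((\dx v)^2)\,dx=-\tfrac{1}{2}(\dx b_3\,\dx v,\dx v)$; in the first, integrate once more to get $(\dx^2 b_3\,v,\dx v)+(\dx b_3\,\dx v,\dx v)$, and then move $\dx$ off $v$ one last time in $(\dx^2 b_3\,v,\dx v)$ to obtain $-\tfrac{1}{2}(\dx^3 b_3\,v,v)$. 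Summing, the $j=3$ contribution is $\tfrac{3}{2}(\dx b_3\,\dx v,\dx v)-\tfrac{1}{2}(\dx^3 b_3\,v,v)$.

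Finally, collecting the four contributions separates cleanly into a pairing $(\,\cdot\, v,v)$ with coefficient $b_0-\tfrac{1}{2}\dx b_1+\tfrac{1}{2}\dx^2 b_2-\tfrac{1}{2}\dx^3 b_3 = c_0$, and a pairing $(\,\cdot\,\dx v,\dx v)$ with coefficient $-b_2+\tfrac{3}{2}\dx b_3$, which is the claimed identity. There is no real obstacle beyond careful sign-tracking and derivative-counting; the only mildly delicate point is the justification that the boundary terms produced by integration by parts on $\R$ vanish, which is why the hypothesis $v\in C^0_{[0,T]}H^3$ (rather than merely $H^2$) is imposed so that all the derivatives being moved land on functions in $L^2\cap C_0$.
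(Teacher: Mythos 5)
Your proof is correct, and your sign-tracking and derivative-counting check out in every term: the $j=3$ contribution indeed collapses to $\tfrac{3}{2}(\dx b_3\,\dx v,\dx v)-\tfrac{1}{2}(\dx^3 b_3\,v,v)$, and the $(v,v)$-coefficients assemble exactly into $c_0$. Note that the paper itself does not prove this lemma --- it is quoted verbatim from \cite{Akh12} (``Lemma 3.5'') and used as a black box --- so your direct integration-by-parts computation supplies a self-contained verification that the paper omits. One small refinement: the hypothesis $v\in C^0_{[0,T]}H^3$ is needed primarily so that $\dx^3 v\in L^2$ and the pairing $(b_3\dx^3 v,v)$ on the left-hand side is well-defined in the first place; once that is granted, the decay needed to kill boundary terms (namely $v,\dx v\to 0$) already follows from $v\in H^2$, so the $H^3$ assumption is doing slightly more work than just justifying the integrations by parts.
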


As can be seen in \eqref{LphiDefn}, the $\dx^{2}$ coefficient in $L_{\phi}$ includes both $a_{2}$ as well as a term involving
$\phi.$
Hence applying Lemma \ref{enrg} to \eqref{v:energy}, we see that a choice of $\phi$ can be made to eliminate derivative terms in $(L_\phi v,v)$. In particular, applying Lemma \ref{enrg} to \eqref{v:energy}, the choice of $\phi$ we need is the one to satisfy the following identity: $$  \left(\frac{1}{2}\left[2 a_2 + \frac{6 a_3\dx \phi}{\phi}- 3 \dx a_3\right]\dx v, \dx v\right)=0.$$The Lemma below justifies that such a choice of $\phi$ is indeed a gauge.
\begin{lem}
  \label{gauCh}
  Let $\phi(x,t)$ be a solution of the ODE
  \begin{align}\label{phi:ODE}
    \begin{cases}
      6 a_3 \dx \phi = \left( 3 \dx a_3  - 2a_2\right) \phi,\\
      \phi(t,0) = 1.
    \end{cases}
  \end{align}
Then $\phi$ is a gauge in the sense of the Definition \ref{gauge}.
\end{lem}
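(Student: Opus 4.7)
The plan is to solve the first-order linear ODE \eqref{phi:ODE} explicitly via an integrating factor and then read off each of the bounds \eqref{gauge-below}, \eqref{guage-der} directly from the resulting formula. Since $\norm{1/a_3}_{L^\infty} \le k_G(t)$, the coefficient $a_3$ is bounded away from zero, and by continuity it has constant sign on $\R$ for each fixed $t$; in particular the ratio $a_3(x,t)/a_3(0,t)$ is positive everywhere. Dividing \eqref{phi:ODE} by $6 a_3 \phi$ and integrating from $0$ to $x$ (using $\phi(t,0)=1$) gives
\begin{equation*}
\phi(x,t) = \left(\frac{a_3(x,t)}{a_3(0,t)}\right)^{\!1/2} \exp\!\left(-\frac{1}{3}\int_0^x \frac{a_2(x',t)}{a_3(x',t)}\,dx'\right).
\end{equation*}

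First I would verify \eqref{gauge-below}. The prefactor $(a_3/a_3(0,t))^{1/2}$ and its reciprocal are bounded above in terms of $\norm{a_3}_{L^\infty}$ and $\norm{1/a_3}_{L^\infty}$, both of which appear in $k_G(t)$. The exponential factor and its reciprocal are bounded by $\exp(\tfrac{1}{3}\norm{\int_0^x a_2/a_3\,dx'}_{L^\infty})$, and the argument of the exponential is the remaining piece of $k_G(t)$; so \eqref{gauge-below} follows at once.

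Next, for \eqref{guage-der}, I would differentiate $\log \phi$ rather than $\phi$ itself. From the ODE, $\dx \log \phi = \frac{\dx a_3}{2 a_3} - \frac{a_2}{3 a_3}$, and higher $x$-derivatives produce rational expressions in $a_3,\ 1/a_3$ and in $a_2, \dx a_2, \dx^2 a_2$ together with $\dx a_3, \dx^2 a_3, \dx^3 a_3$. Reassembling $\dx^k \phi$ as $\phi$ times a polynomial in $\dx \log \phi,\ldots,\dx^k \log \phi$ and invoking the $L^\infty$ bound on $\phi$ just obtained, I see that $\norm{\phi}_{W^{3,\infty}}$ is controlled by the $\sum_{i=0}^3 \norm{a_i}_{W^{i,\infty}}$ and $\norm{1/a_3}_{L^\infty}$ pieces of $\tld M(t)$. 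For the time derivative, direct differentiation of the explicit formula yields
\begin{equation*}
\dt \phi = \phi \left[\frac{1}{2}\left(\frac{\dt a_3(x,t)}{a_3(x,t)} - \frac{\dt a_3(0,t)}{a_3(0,t)}\right) -\frac{1}{3}\,\dt\!\int_0^x \frac{a_2(x',t)}{a_3(x',t)}\,dx'\right],
\end{equation*}
which in $L^\infty$ is controlled precisely by the terms $\norm{\dt a_3}_{L^\infty}$ and $\norm{\dt \int_0^x a_2/a_3\,dx'}_{L^\infty}$ that appear in $\tld M(t)$, together with $\norm{1/a_3}_{L^\infty}$ and the $L^\infty$ bound on $\phi$.

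The main obstacle, such as it is, is purely organizational: $\tld M(t)$ has been defined so that exactly the quantities arising upon differentiating the integrating-factor formula appear in it. The only genuine point of care is justifying the square root, which reduces to the constant-sign property of $a_3$ noted above; after that the argument is a direct computation with no delicate cancellation.
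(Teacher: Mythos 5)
Your proposal follows exactly the paper's approach: solve the first-order linear ODE explicitly, obtaining $\phi(x,t) = \sqrt{a_3(x,t)/a_3(0,t)}\,e^{-\int_0^x a_2/(3a_3)\,dy}$, and then read the bounds \eqref{gauge-below} and \eqref{guage-der} directly off this formula and its derivatives. The paper states those two verification steps in one line each, whereas you spell out the details (including the constant-sign remark needed to take the square root), but the argument is the same.
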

\begin{proof}
  The ODE for $\phi$ is solved explicitly as
  \begin{align*}
    \phi(x,t) = \sqrt{\frac{a_3(x,t)}{a_3(0,t)}}e^{-\int_0^x \frac{a_2(y,t)}{3a_3(y,t)}dy}.
  \end{align*}
  From this definition \eqref{gauge-below} follows. Differentiation of $\phi$ using \eqref{phi:ODE} yields \eqref{guage-der}.
\end{proof}
We consider the parabolic term in \eqref{v:energy} and demonstrate that a change of variables from the Definition \ref{gauge} does not significantly affect it.
\begin{lem}\label{energy:parab}
Let $\phi$ be a function satisfying Definition \ref{gauge} and let $w\in H^2$. Then there exists a constant $C(k_G)$, such that
   \begin{align*}
 I_{w}:= -( \phi^{-2}\dx^4w, w) \le -\frac{1}{C(k_G)}\norm{w}_{H^2}^2+  C(\tld M(t)) \norm{w}_{L^{2}}^2
 \end{align*}
\end{lem}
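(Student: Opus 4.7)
The plan is to integrate by parts twice on $I_w$, producing a leading term of the form $-\int \phi^{-2}(\partial_x^2 w)^2\,dx$, which will supply the negative $\|w\|_{H^2}^2$ on the right-hand side once we use the two-sided bound $\phi^{-2}\approx_{k_G(t)} 1$ from \eqref{gauge-below}. The remaining commutator-type terms will be controlled by Cauchy--Schwarz, Young's inequality, and the gauge derivative bound \eqref{guage-der}, producing contributions of the form $C(\tld M(t))\|w\|_{L^2}^2$.

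Concretely, I would first integrate by parts twice, using that $w\in H^2$ (so that two derivatives can legitimately be transferred onto $\phi^{-2}w$), to obtain
\begin{equation*}
I_w = -\int \phi^{-2}\,(\partial_x^2 w)^2\,dx \;-\;2\int \partial_x(\phi^{-2})\,\partial_x w\,\partial_x^2 w\,dx \;-\;\int \partial_x^2(\phi^{-2})\,w\,\partial_x^2 w\,dx.
\end{equation*}
By \eqref{gauge-below} we have $\phi^{-2}\ge 1/C(k_G(t))$, so the first term is bounded above by $-\frac{1}{C(k_G)}\|\partial_x^2 w\|_{L^2}^2$. The coefficients $\partial_x(\phi^{-2})$ and $\partial_x^2(\phi^{-2})$ are controlled in $L^\infty$ by $C(\tld M(t))$ via \eqref{guage-der} together with $\phi^{-2}\le C(k_G)$.

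Next I would apply Cauchy--Schwarz followed by Young's inequality $ab\le \eta a^2 + b^2/(4\eta)$ (with $\eta$ chosen small relative to $1/C(k_G)$) to bound the two error integrals. This converts them into a fraction of $\|\partial_x^2 w\|_{L^2}^2$ plus $C(\tld M)\bigl(\|\partial_x w\|_{L^2}^2 + \|w\|_{L^2}^2\bigr)$. The only non-trivial point is the term $\|\partial_x w\|_{L^2}^2$, which I would handle by the standard interpolation inequality $\|\partial_x w\|_{L^2}^2\le \|w\|_{L^2}\|\partial_x^2 w\|_{L^2}\le \eta'\|\partial_x^2 w\|_{L^2}^2 + \frac{1}{4\eta'}\|w\|_{L^2}^2$, again choosing $\eta'$ small enough that the $\partial_x^2 w$ contribution is absorbed into the good term.

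After absorption, I arrive at $I_w \le -\frac{1}{2C(k_G)}\|\partial_x^2 w\|_{L^2}^2 + C(\tld M(t))\|w\|_{L^2}^2$. Finally, since $\|w\|_{H^2}^2 \approx \|w\|_{L^2}^2 + \|\partial_x^2 w\|_{L^2}^2$, I rewrite $-\frac{1}{2C(k_G)}\|\partial_x^2 w\|_{L^2}^2$ as $-\frac{1}{C'(k_G)}\|w\|_{H^2}^2 + \frac{1}{C'(k_G)}\|w\|_{L^2}^2$, absorbing the extra $\|w\|_{L^2}^2$ into the final $C(\tld M(t))\|w\|_{L^2}^2$ term. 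The main (though not deep) obstacle is just to keep the constants clean: the $k_G$-dependence and the $\tld M$-dependence must be separated, as emphasized in the remark after Theorem \ref{thm:lin}, so the Young's inequality splits must be carried out with all smallness parameters chosen in terms of $k_G(t)$ alone and never of $\tld M(t)$.
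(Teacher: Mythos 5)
Your proof is correct and takes essentially the same approach as the paper: integrate by parts twice, extract the coercive term from the lower bound on $\phi^{-2}$ (via \eqref{gauge-below}), estimate the error terms by Cauchy--Schwarz and Young with the small parameter chosen in terms of $k_G$ alone, and interpolate $\|\partial_x w\|_{L^2}$ between $\|w\|_{L^2}$ and $\|\partial_x^2 w\|_{L^2}$. The only cosmetic difference is that the paper performs one extra integration by parts on the cross term, rewriting $-2\int\partial_x(\phi^{-2})\,\partial_x w\,\partial_x^2 w$ as $\int \partial_x^2(\phi^{-2})(\partial_x w)^2$ via $\partial_x w\,\partial_x^2 w = \tfrac12\partial_x[(\partial_x w)^2]$, whereas you estimate that term directly; both routes reduce to the same interpolation-and-absorption step, so the arguments coincide.
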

\begin{proof}
Integrating by parts twice gives
 \begin{align*}
  & I_w = -\left[(\phi^{-2}\dx^2 w,\dx^2 w)  + (\dx^2(\phi^{-2})\dx^2 w,w) -2 (\dx^2(\phi^{-2}) \dx w,\dx w)\right],
 \end{align*}
 where we have used $\dx^2 w\cdot \dx w = \frac{1}{2}\dx[\dx w ^2]$ for one more integration by parts in the last term. Using Cauchy-Schwarz implies
 \begin{align}\label{eps-energy}
   I_w \le -(\phi^{-2}\dx^2 w,\dx^2 w) + \norm{\phi^{-2}}_{W^{2,\infty}}(\norm{w}_{H^1}^2 + \norm{w}_{H^2}\norm{w}_{L^2})].
 \end{align}
 Interpolating $\norm{w}_{H^1}^2\le\norm{w}_{H^2}\norm{w}_{L^2}$ and using \eqref{guage-der} gives
 \begin{align*}
   I_w \le -(\phi^{-2}\dx^2 w,\dx^2 w) + C(\tld M(t)) \norm{w}_{H^2}\norm{w}_{L^2}.
 \end{align*}
 A Cauchy-Schwarz estimate gives, for $\alpha>0,$
 \begin{align*}
   I_w \le -(\phi^{-2}\dx^2 w,\dx^2 w) + \alpha \norm{w}_{H^2}^2 + \alpha^{-1} C(\tld M(t))\norm{w}_{L^2}^2.
 \end{align*}
 Using the upper bound for $\phi$ from \eqref{gauge-below} we estimate
$$(\phi^{-2}\dx^2 w,\dx^2 w)\ge \frac{1}{C(k_G(t))} \norm{w}_{\dot H^2}^2.$$
Making the choice $\alpha = \frac{1}{2C(k_G(t))}$ completes the proof.\end{proof}

\begin{proof}[Proof of Theorem \ref{thm:lin}]
We first claim that demonstrating \eqref{lin:ref} reduces to showing
\begin{equation}\label{dt:lin}
  \dt \norm{v}_{L^{2}}^2 \le C(\tld M(t))(\norm{v}_{L^{2}}^2 + \norm{v}_{L^{2}}\norm{h}_{L^{2}})
  -\frac{\eps}{ C(k_G)}\norm{w}_{H^2}^2.
\end{equation}
Indeed, if \eqref{dt:lin} were true, we ignore the $H^2$ term and use Grownwall's lemma and Cauchy-Schwarz to get
\begin{align*}
  \sup_{0\le t'\le t}\norm{v(t')}^2_{L^2}\le e^{\int_0^t C(\tld M(t'))dt'}\left[\norm{v_0}+\int_0^t \norm{h(t')}_{L^2}\right]^2.
\end{align*}
Applying \eqref{v} demonstrates \eqref{lin:ref} except for the $\norm{w}_{H^2}$ term on the left. To get it, we rearrange \eqref{dt:lin}:
\begin{align*}
  \frac{\eps}{ C(k_G)}\norm{w}_{H^2}^2 \le C(\tld M(t))(\norm{v}_{L^{2}}^2 + \norm{v}_{L^{2}}\norm{h}_{L^{2}}) -  \dt \norm{v}_{L^{2}}^2.
\end{align*}
Integrating in time and using \eqref{lin:ref} (without the $H^2$ term) we get:
\begin{align*}
  & \eps\norm{w}_{H^2}^2 \le C(k_G)\left[\int_0^t C(\tld M(t'))(\norm{v}_{L^{2}}^2 + \norm{v}_{L^{2}}\norm{h}_{L^{2}})(t') dt' -  \norm{v(t)}_{L^{2}}^2 +  \norm{v_0}_{L^{2}}^2\right]\\
  & \le C(k_G(t))\exp\left(\int_0^t C(\tld M(t')) dt'\right)\left( \norm{w_0}_{L^2} + \norm{h}_{L^1_t L^2}\right).
\end{align*}

It remains to establish the estimate \eqref{dt:lin}. To do so we return to \eqref{v:energy} with $\phi$ from Lemma \ref{gauCh}. Using Lemma \ref{enrg} implies
 \begin{equation*}
   \dt \int \abs{v}^2 = - 2 (c_0 v,v) +(h,\phi^{-1}v)- ( \eps \dx^4(\phi v), \phi^{-1}v)=:I + II +III,
 \end{equation*}
 where $c_0$ is defined by Lemma \ref{enrg} applied to $L=L_\phi$. In particular,
 \begin{equation*}
   \norm{c_0}_{L^\infty_{x,t}}\le C(\tld M(t)),
 \end{equation*}
 which gives an estimate of $$I\le C(\tld M(t))\norm{v}_{L^{2}}^2.$$
 Next we use \eqref{gauge-below} to estimate the term $II:$
 $$II=|(h,\phi^{-1}v)|\le C(k_G(t))\norm{v}_{L^{2}}\norm{h}_{L^{2}}.$$
  Finally, note that $III = \eps I_w$ for $I_w$ from Lemma \ref{energy:parab} and $w$ from \eqref{gauged:var}. Hence from Lemma \ref{energy:parab} and \eqref{v}:
   $$
   III \le \eps C(\tld M(t)) \norm{v}_{L^{2}}^2- \frac{\eps}{ C(k_G)}\norm{w}_{H^2}^2.
   $$
   Combining the estimates for $I,$ $II,$ and $III$ establishes \eqref{dt:lin} and concludes the proof.
 \end{proof}

\section{Nonlinear {\it a priori} estimates}
 To construct solutions of \eqref{generalEquation}, we begin with the solutions of the parabolically regularized equation \eqref{parab:non}. The goal of this section is to establish a uniform {\it{a priori}} estimate on the dispersion and on the high norms of the solutions of \eqref{parab:non}. Namely, we show that the solution cannot grow too fast for a certain time, with this time depending on the size of the initial data, dispersion and a balance of parameters $\eps$ and $\delta$. Our main nonlinear estimate is summarized in the following propositions, proofs of which will occupy most of this section.

 \subsection{Main propositions}
 \begin{prop}\label{unif:est1}
 Let $T'>0$ be given. There exists increasing functions $C_1(\cdot,\cdot)$ and $C_3$ with $C_1\ge 1$ and $C_3\ge 1$
 such that
 the following inequalities hold. Let $\ue \in C^1_{T'} H^{8}\cap C^0_{T'} H^{12}$ be a solution of \eqref{parab:non}. Let $M_\eps(t)$, $k(t)$ be as in \eqref{high-norm} and \eqref{dispersion+low-norm}, respectively. Then for $t\le T',$
   \begin{align}
     k(t)& \le k(0) \frac{1}{1-tC_1(M_\eps(t),k(0))} + tC_3(M_\eps(t)).\label{low-norm-est}
   \end{align}
    \end{prop}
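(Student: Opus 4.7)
The plan is to estimate the two quantities making up $k(t)$ --- namely $\norm{\ue(t')}_{H^4}$ and $\lambda(\dx^3{\bf\ue}(t'))$ --- separately. The first will produce the $tC_3(M_\eps(t))$ term; the second will produce the Riccati quotient $k(0)/(1-tC_1(\cdots))$; combining them via $\lambda(0),\norm{u_0}_{H^4}\le k(0)$ should yield \eqref{low-norm-est}.

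For the $H^4$ piece the approach is direct. From \eqref{parab:non}, $\dt\ue=-f(\dx^3{\bf\ue})-\eps\dx^4\ue$, so the fundamental theorem of calculus gives
\[
\norm{\ue(t)}_{H^4} \le \norm{u_0}_{H^4} + \int_0^t \bigl(\norm{f(\dx^3{\bf\ue})(s)}_{H^4} + \eps\norm{\dx^4\ue(s)}_{H^4}\bigr)\,ds.
\]
Since $M_\eps(t)$ controls both $\norm{\ue}_{H^7}$ and $\eps\norm{\ue}_{H^8}$, a Moser-type composition estimate based on (A1) yields $\norm{f(\dx^3{\bf\ue})}_{H^4}\le C(M_\eps(t))$ (four derivatives on $f$ produce at worst a $\dx^7\ue$ term, absorbed into $\norm{\ue}_{H^7}\le M_\eps$), while $\eps\norm{\dx^4\ue}_{H^4}\le M_\eps(t)$ trivially. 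This produces $\norm{\ue(t)}_{H^4}\le\norm{u_0}_{H^4}+t\,C_3(M_\eps(t))$, a form preserved under $\sup_{t'\le t}$.

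For the dispersion piece I would work pointwise in $x$. Set $G(x,t):=f_{z_3}(\dx^3{\bf\ue}(x,t))$ and use the chain rule together with the equation to write
\[
\dt G = \sum_{j=0}^3 f_{z_3,z_j}(\dx^3{\bf\ue})\cdot\dx^j\bigl(-f(\dx^3{\bf\ue})-\eps\dx^4\ue\bigr) + f_{z_3,t}(\dx^3{\bf\ue}).
\]
The highest-order nonlinear term involves $\dx^6\ue$, controlled pointwise by $\norm{\ue}_{H^7}\le M_\eps(t)$ via Sobolev embedding; the parabolic top-order piece $\eps\dx^7\ue$ is controlled pointwise by $\eps(\norm{\ue}_{H^7}+\norm{\ue}_{H^8})\le CM_\eps(t)$ via $H^1(\R)\hookrightarrow L^\infty$. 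Thus $\norm{\dt G(\cdot,t)}_{L^\infty_x}\le C(M_\eps(t))$, and hence pointwise
\[
\Bigl|\tfrac{d}{dt}\tfrac{1}{|G(x,t)|}\Bigr|=\frac{|\dt G(x,t)|}{|G(x,t)|^2}\le C(M_\eps(t))\,\lambda(t)^2.
\]
Integrating in $t$ for each fixed $x$ and taking $\sup_x$ produces the Riccati-type integral inequality
\[
\lambda(t)\le \lambda(0)+\int_0^t C(M_\eps(s))\,\lambda(s)^2\,ds,
\]
whose solution is bounded by $\lambda(0)/(1-t\,C(M_\eps(t))\,\lambda(0))$ on the interval where the denominator is positive, by a standard Gronwall/Riccati argument. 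Using $\lambda(0)\le k(0)$ and monotonicity of $y\mapsto y/(1-tCy)$ on $(0,(tC)^{-1})$ upgrades this to the form appearing in \eqref{low-norm-est}.

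The main obstacle I anticipate is the careful bookkeeping of the chain-rule expansion of $\dt G$, ensuring that every derivative of $\ue$ that appears is absorbed into $C(M_\eps(t))$; in particular, the $\eps\dx^7\ue$ term must be paired with the $\eps\norm{\ue}_{H^8}$ contribution to $M_\eps$ rather than with the $H^7$ piece. A secondary subtle point is justifying differentiability in $t$ of $|G|^{-1}$, which requires $|G|>0$ on the relevant interval; this holds for small $t$ by continuity of $G$ and the hypothesis $\lambda(0)<\infty$, and is closed consistently by the Riccati bootstrap itself.
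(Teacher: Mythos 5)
Your overall strategy coincides with the paper's: split $k(t)$ into the $H^4$ piece and the $\lambda$ piece, control each via the Fundamental Theorem of Calculus together with a bound $\|\partial_t\ue\|_{H^4}\le C(M_\eps(t))$ (the paper isolates this as Proposition \ref{lem:dt}), and then absorb $\|(u_0)_\delta\|_{H^4},\lambda(0)\le k(0)$ into the stated form. The only real difference lies in how the dispersion piece is closed. The paper applies FTC directly to $f_{z_3}(\dx^3\vue)$ to get the pointwise lower bound $f_{z_3}(\cdot,t)\ge f_{z_3}(\cdot,0)-tC(M_\eps,k(0))$, then inverts this algebraically; no Gronwall-type step is needed, and positivity of $f_{z_3}$ on the relevant time interval falls out immediately from the lower bound. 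You instead differentiate $1/|f_{z_3}|$ in $t$ and derive a Riccati-type integral inequality $\lambda(t)\le\lambda(0)+\int_0^t C(M_\eps(s))\lambda(s)^2\,ds$, then invoke a comparison argument. Both yield the same quotient $k(0)/(1-tC_1)$, but the paper's version is more elementary (one inversion rather than a comparison lemma) and sidesteps the continuation/circularity issue you correctly flag, since the lower bound on $f_{z_3}$ itself guarantees $\lambda(t)<\infty$ for small $t$ without any bootstrap. Your approach is correct, provided you also note that $\lambda(0)<\infty$ together with continuity of $f_{z_3}(\dx^3\vue)$ forces $f_{z_3}$ to keep a fixed sign (so $|G|$ is differentiable where it is nonzero), and that the coefficient $C(M_\eps(s))$ may be replaced by $C(M_\eps(t))$ before applying the comparison because $M_\eps$ is nondecreasing.
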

  \begin{prop}\label{uniform:est}
For $T'>0$ as before, there exist functions $C_2$ and $C_4$ both increasing and bounded below by $1$, so that for $\ue \in C^1_{T'} H^{8}\cap C^0_{T'} H^{12}$ as in Proposition \ref{unif:est1},%
    \begin{align}\label{uniform:n}
 M_\eps(t)& \le C_2(k(t),tC_4(M_\eps(t)))\left[ M_\eps(0)+t C_4(M_\eps(t))\right].
  \end{align}
\end{prop}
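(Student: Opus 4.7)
The plan is to apply the linear estimate of Theorem \ref{thm:lin} to the equation satisfied by $w := \dx^7 \ue$, which is obtained by differentiating the regularized equation \eqref{parab:non} seven times in $x$. Extending the appendix's calculation of $\dx^3$ to seven derivatives via Leibniz and the chain rule, one rewrites
\begin{equation*}
  \dt w + a_3 \dx^3 w + a_2 \dx^2 w + a_1 \dx w + a_0 w = -\eps \dx^4 w + h,
\end{equation*}
with $a_3 = f_{z_3}(\dx^3 \vue)$, $a_2 = 6\,\dx f_{z_3} + f_{z_2}$, and with $a_1$, $a_0$, $h$ expressible as polynomial combinations of partial derivatives of $f$ evaluated at $\vue$ together with derivatives $\dx^{j}\ue$ for $j \le 6$ only.

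I would then verify the two ingredients of Theorem \ref{thm:lin}. For $k_G(t)$, the bound $\norm{1/a_3}_{L^\infty} = \lambda(\dx^3 \vue) \le k(t)$ is immediate, and $\norm{a_3}_{L^\infty} \le C(k(t))$ follows from $\dx^3 \ue \in L^\infty$ by Sobolev. The key primitive is
\begin{equation*}
  \int_0^x \frac{a_2}{a_3}\,dy = \int_0^x \left( 6\, \dx \log|f_{z_3}| + g_M \right) dy,
\end{equation*}
where the logarithmic piece integrates explicitly to something bounded by $C(k(t))$. Condition (A2) handles the $g_M$-piece: the total-derivative part integrates to $g_D(\dx^2 \vue)(x) - g_D(\dx^2 \vue)(0)$, bounded by $C(k(t))$ since $g_D \in L^\infty_{x,z}$ and $\dx^2 \ue \in L^\infty$; the remainder $\int_0^x g_H(\dx^3 \vue)\,dy$ is controlled using \eqref{g_H-is-cubic}, which via Taylor yields $|g_H(\dx^3 \vue)| \le C(k(t))\,|\dx^3 \vue|^2$, whose integrability follows from $\ue \in H^3$. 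Thus $k_G(t) \le C(k(t))$. For $\tld M(t)$, the higher $W^{i,\infty}$-norms of $a_i$ and the time derivatives of $a_3$ and of the primitive involve derivatives of $\ue$ of order at most $7$ (bounded in $L^\infty$ via $H^1 \subset L^\infty$ in one dimension, and using the equation to trade $\dt \ue$ for spatial derivatives), giving $\tld M(t) \le C(M_\eps(t))$.

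Next I would estimate the forcing: each summand in $h$ is a product of finitely many $\dx^j \ue$ with $j \le 6$, and can be placed in $L^2$ by putting the top $\dx^6 \ue$-factor in $L^2$ and all lower-order factors in $L^\infty$ (since $\dx^{\le 5} \ue \in H^2 \subset L^\infty$). Hence $\norm{h(t)}_{L^2} \le C(M_\eps(t))$ and $\norm{h}_{L^1_t L^2} \le t\,C(M_\eps(t))$. Theorem \ref{thm:lin} then yields
\begin{equation*}
  \norm{\dx^7 \ue(t)}_{L^2} \le C(k(t))\, \exp\!\left(\int_0^t C(M_\eps(t'))\,dt'\right)\!\left(\norm{\dx^7 (u_0)_\delta}_{L^2} + t\,C(M_\eps(t))\right),
\end{equation*}
which, combined with a direct $L^2$ energy estimate on \eqref{parab:non} (trivial by (A3)), delivers the $H^7$ part of $M_\eps(t)$ in the form \eqref{uniform:n}. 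For the parabolic component $\eps\,\norm{\ue}_{L^\infty_t H^8}$, I would run the same argument with $w = \dx^8 \ue$, using the balance $\eps \le \delta^2$ from Remark \ref{M0} together with \eqref{eq:BS:above} to absorb the $\eps\,\norm{(u_0)_\delta}_{H^8}$ initial contribution.

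The main obstacle is the bookkeeping required to isolate $a_2$ in gauge-compatible form, so that condition (A2) applies verbatim to $a_2/a_3$, and to confirm that every remaining piece in $h$ genuinely involves only $\dx^{\le 6} \ue$. The tight margin between the regularity demanded by $h$ and the regularity provided by $\ue \in H^7$—the $\dx^6 \ue$ factor is only in $L^2$, forcing the companion factors into $L^\infty$ via Sobolev—is the delicate point that fixes the $H^7$ threshold in Theorem \ref{main-thm}.
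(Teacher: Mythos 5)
Your proposal follows the same route as the paper's proof: differentiate \eqref{parab:non} seven (and eight) times, recast the result in the form of the linear problem \eqref{lin}, bound $k_G$ and $\tilde M$ using Conditions (A1)--(A2) and Sobolev embedding (with the time derivatives handled by trading $\dt\ue$ for spatial derivatives via the equation, as in Proposition \ref{lem:dt}), bound the forcing in $L^1_tL^2_x$, and invoke Theorem \ref{thm:lin}; the coefficient of $\dx[f_{z_3}]$ in $a_2$ should be $7$, not $6$, but this only rescales the harmless logarithmic contribution to the primitive of $a_2/a_3$. The one place where \emph{``run the same argument''} hides something nontrivial is the $\eps\,\norm{\ue}_{L^\infty_tH^8}$ component: the remainder $\tilde f_8$ can contain up to two factors of $\dx^7\ue$, and since putting one in $L^\infty$ costs $\norm{\ue}_{H^8}$ without an accompanying $\eps$, the estimate only closes because $\tilde f_8$ is at most \emph{linear} in $\norm{\ue}_{H^8}$ (at most one $\dx^7\ue$ factor lands in $L^\infty$ once the other is in $L^2$), so that multiplication by $\eps$ yields $C(\norm{\ue}_{H^7})\,\eps\norm{\ue}_{H^8}\le C(M_\eps)M_\eps$; your proposal handles the initial-data term $\eps\norm{(u_0)_\delta}_{H^8}$ correctly via $\eps\le\delta^2$ but leaves this structural fact about $\tilde f_8$ implicit, whereas the paper records it precisely in Lemma \ref{lem:remainder}.
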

\begin{rem}
  Note that we demand that all the functions $C_{1},$ $C_{2},$ $C_{3},$ and $C_4$ are only dependent on $\eps\ge 0$ through $M_\eps$ and not in any other way.
  \end{rem}
 Before proving these propositions, we discuss their implications. Essentially, we want to obtain a lower bound on the lifespan of the solution $\ue,$ independent of the values of $\eps$ and $\delta$. One way to achieve this is to find $M>M_\eps(0)$, so that whenever the solution norm is trapped in the region $M_\eps(t)\in (M,2M)$, a ``substantial'' amount of time must have passed. When combined with the existence result for a parabolic regularization, Proposition \ref{parab:prop}, these propositions allow us to create solutions to \eqref{generalEquation}, whose $H^7$-norm and time of existence are independent of the regularizations. We provide the details of this informal outline before giving the proof of the Propositions.\\ %

 We also want to discuss an interesting technical challenge here. The linear estimate \eqref{lin:ref} applied to \eqref{parab:non} may na\"{\i}vely suggest that it may be possible to prove an energy estimate
  $$
  \norm{\ue}_{H^s} \le O(\norm{u_{0,\eps}}_{H^s}),
  $$
  and of course such estimates are valid for KdV and other semilinear equations. However, the validity of such an estimate, even on the linear level, relies on the non-vanishing of dispersion and finiteness of ``antidiffusion" (as captured in coefficient $k_G(t)$ in \eqref{linear-coeff} and $k(t)$ in the nonlinear problem \eqref{parab:non}). For this reason, our argument requires the combination of Propositions \ref{unif:est1} and \ref{uniform:est}. In some form Proposition \ref{uniform:est} estimates the $H^7$ Sobolev norm for the regularized problem and is based on the refined linear estimate (Theorem \ref{thm:lin}), while Proposition \ref{unif:est1} gives an estimate of dispersion and its proof is cruder, but just as essential.
  \begin{coro}\label{apriori}
    Define \begin{align}\label{M}
      M=2C_2(4k(0), 4k(0))[ M_\eps(0)+2k(0)], \text{ for the function }C_2 \text{ from Proposition }\ref{uniform:est}.
    \end{align} Then there exists $T=T(M,k(0))$, so that if  $t>0$ and
  \begin{align}\label{Mk}
    M < M_\eps(t),%
  \end{align}
    then $t> T$.
  \end{coro}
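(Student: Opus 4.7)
The plan is a standard bootstrap/continuity argument combining Propositions \ref{unif:est1} and \ref{uniform:est}. Two preliminary observations set the stage: because $C_2 \ge 1$, the definition \eqref{M} forces $M \ge 2(M_\eps(0)+2k(0)) > M_\eps(0)$; and because $\ue \in C^0_{T'} H^{12}$, the map $t \mapsto \norm{\ue(t)}_{H^7}+\eps\norm{\ue(t)}_{H^8}$ is continuous, so its running supremum $M_\eps(t)$ is continuous and non-decreasing. Hence if $M_\eps$ exceeds $M$ at some $t_0>0$, by continuity there is a first crossing time $t^* \in (0, t_0]$ with $M_\eps(t^*) = M$ and $M_\eps(t) \le M$ on $[0, t^*]$. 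The strategy is to pick $T = T(M, k(0))$ small enough that Propositions \ref{unif:est1} and \ref{uniform:est} together force the strictly stronger conclusion $M_\eps(t^*) \le M/2$ from the hypothesis $M_\eps \le M$ on $[0,t^*]$, which contradicts $M_\eps(t^*) = M$.

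Concretely, I would set
\[
T := \min\!\left(\frac{1}{2\,C_1(M,k(0))},\; \frac{2k(0)}{C_3(M)},\; \frac{2k(0)}{C_4(M)}\right),
\]
which depends only on $M$ and $k(0)$ since the $C_i$ are universal increasing functions of those arguments. Assume for contradiction that $t^* \le T$. On $[0,t^*]$, monotonicity of $C_1$ and $C_3$ together with Proposition \ref{unif:est1} at time $t^*$ give
\[
k(t^*) \le \frac{k(0)}{1-t^* C_1(M,k(0))} + t^* C_3(M) \le 2k(0) + 2k(0) = 4k(0),
\]
where the first summand uses $t^* \le 1/(2C_1(M,k(0)))$ and the second uses $t^* \le 2k(0)/C_3(M)$. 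Feeding $k(t^*) \le 4k(0)$ and $t^* C_4(M) \le 2k(0) \le 4k(0)$ into Proposition \ref{uniform:est} at time $t^*$, and using monotonicity of $C_2$ in both slots,
\[
M_\eps(t^*) \le C_2\bigl(k(t^*),\, t^* C_4(M)\bigr)\bigl[M_\eps(0)+t^* C_4(M)\bigr] \le C_2(4k(0),\,4k(0))\bigl[M_\eps(0)+2k(0)\bigr] = \tfrac{M}{2}.
\]
Since $M>0$, this contradicts $M_\eps(t^*)=M$, so no such $t^*$ lies in $(0,T]$, i.e.\ $M_\eps(t) \le M$ on $[0,T]$, which is the contrapositive of the desired conclusion.

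The main obstacles I foresee are organizational rather than analytic. One must keep straight that $C_1, C_3, C_4$ are monotone only in the arguments explicitly listed, so it is crucial that on the bootstrap interval the inequality $M_\eps(t) \le M$ allows the replacement $C_i(M_\eps(t),\cdot) \le C_i(M,\cdot)$; this is guaranteed by the remark following Proposition \ref{uniform:est}. Equally, one must verify that $M_\eps$ is genuinely continuous so that a true first crossing exists with $M_\eps(t^*)=M$ (rather than a jump past $M$), which is immediate from the $C^0_{T'} H^{12}$ regularity assumed in the two propositions. Everything else reduces to arithmetic with the monotone bounds.
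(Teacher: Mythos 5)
Your proof is correct and follows essentially the same bootstrap-by-contradiction strategy as the paper: pick $T$ small in terms of the constants $C_1,C_3,C_4$, use Proposition \ref{unif:est1} to bound $k(t)\le 4k(0)$ and then Proposition \ref{uniform:est} to force $M_\eps$ back below $M$, contradicting the assumed crossing. The only cosmetic difference is that you invoke a first-crossing time $t^*$ with $M_\eps(t^*)=M$ exactly, which lets you use $C_i(M,\cdot)$ in the definition of $T$, whereas the paper instead appeals to continuity to assume ``WLOG'' that $M< M_\eps(t)\le 2M$ and so must size $T$ using $C_i(2M,\cdot)$; both devices are valid and yield the same conclusion.
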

  \begin{proof}
    Let $T =\min \left(\frac{2k(0)}{C_3(2M)}, \frac{1}{ 2C_1(2 M,k(0))},\frac{2k(0)}{C_4(2M)}\right)$ for $C_{1},$ $C_3$ and
    $C_{4}$ from \eqref{low-norm-est} and \eqref{uniform:n}. Without loss of generality, since the solution is continuous in time, we may assume more than \eqref{Mk} holds:
      \begin{equation}\nonumber
      M<M_{\eps}(t)\leq 2M.
      \end{equation}
Now, assume for the sake of contradiction that $t\le T$. From \eqref{low-norm-est} for $t\le T$ we obtain:
    \begin{align}\label{kt-k0}
      k(t)\le k(0) \frac{1}{1-TC_1(M_\eps(t),k(0))} + T C_3(M_\eps(t))      \le 4k(0).
    \end{align}
    With this bound on $k(t)$ we apply \eqref{uniform:n} and use $t\le T$:
    \begin{align*}
      M_\eps(t) \le C_2(4k(0), 2k(0))[M_\eps(0) + 2k(0)] < M.
    \end{align*}
     The last estimate contradicts \eqref{Mk}, thus the only way to avoid the contradiction is to conclude that $t> T$.
  \end{proof}
  Note that the estimate \eqref{kt-k0} in Corollary \ref{apriori} only depends on $T$ and the size of $M_\eps(t)$. In particular, we can conclude the following.
    \begin{rem}\label{rem:k}
    With the choice of $M$ and $T$ from the prior Corollary \ref{apriori}, we get that if $M_\eps(t)\le 2M$ and $0\le t\le T$, then
    \begin{align*}
      k(t)\le 4k(0).
    \end{align*}
  \end{rem}
\begin{coro}\label{2M} Let $T>0$ be as in Corollary \ref{apriori}. Suppose $\ue\in C_{T'}H^{12}$ is a solution of \eqref{parab:non} for $T'\le T$. Then
\begin{align}
  M_\eps(T')\le 2M.
\end{align}
\end{coro}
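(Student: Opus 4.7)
The plan is a one-line application of the contrapositive of Corollary \ref{apriori}. That corollary produces a time $T=T(M,k(0))$ such that whenever $t > 0$ and $M_\eps(t) > M$, one must have $t > T$; equivalently, for every $t \in (0,T]$ on which $\ue$ exists, $M_\eps(t) \le M$. Since by hypothesis $T' \le T$, applying this at $t = T'$ (with the case $T' = 0$ being trivial) yields $M_\eps(T') \le M \le 2M$, which is the desired conclusion.

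For completeness, I would verify that the continuity-in-time setup implicit in the bootstrap proof of Corollary \ref{apriori} is available here: because $\ue \in C_{T'}H^{12}$, and $H^{12}$ embeds continuously into both $H^7$ and $H^8$, the map $t \mapsto M_\eps(t)$ is a running supremum of a continuous real-valued function of $t$ and is therefore continuous (and nondecreasing) on $[0, T']$. Moreover, the definition \eqref{M} of $M$ together with $C_2 \ge 1$ forces $M \ge 2(M_\eps(0) + 2k(0)) > M_\eps(0)$, so that $M_\eps(0) < M$; this places us in the regime in which Corollary \ref{apriori} genuinely bites. If the inequality $M_\eps(T') \le M$ were to fail, the intermediate value theorem would produce a first crossing time $t_0 \in (0, T')$ beyond which $M_\eps(t) \in (M, 2M)$ in a right neighborhood, and Corollary \ref{apriori} would force such times to exceed $T$, contradicting $t_0 < T' \le T$.

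No new nonlinear estimate is needed — everything has already been absorbed into Propositions \ref{unif:est1} and \ref{uniform:est} — so I do not expect any genuine obstacle. The only reason the statement is written with the safety factor $2M$ rather than the sharper bound $M$ is, presumably, that later steps (the passage to the limit as $\eps, \delta \to 0$) are insensitive to the precise multiplicative constant, and leaving a factor of $2$ of slack simplifies downstream bookkeeping.
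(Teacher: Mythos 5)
Your proposal is correct and takes essentially the same approach as the paper: both hinge on Corollary~\ref{apriori} together with continuity of $t\mapsto M_\eps(t)$. The paper phrases it as a first-crossing argument at the level $2M$ (let $t^*$ be the first time the norm reaches $2M$, show $t^*>0$, apply Corollary~\ref{apriori} to get $t^*>T$, contradicting $t^*\le T'\le T$), while you read off the contrapositive of Corollary~\ref{apriori} directly at $t=T'$, which is a tidier formulation of the same idea and even yields the sharper bound $M_\eps(T')\le M$; your backup IVT argument is precisely what the paper writes out.
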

 \begin{proof}
   By continuity of $\norm{\ue(t)}_{H^8}$, $M_\eps(t)$ is a continuous function. Let $0\le t^*\le T'$ be the smallest value so that $\norm{\ue(t^*)}_{H^8}\ge 2M$ (if such $t^*$ exists). Recalling that the function $C_2$ from Proposition \ref{uniform:est} satisfies the bound $C_2\ge 1$, we deduce from the definition of $M$ in Corollary \ref{apriori} that
   \begin{align*}
     M\ge 2 C_2 M_\eps(0) > M_\eps(0).
   \end{align*}
   We conclude that $t^*>0$ and $\norm{\ue(t^*)}_{H^8}=2M$. Hence by Corollary \ref{apriori} applied to $\ue$,
   we conclude $t^*\ge T$. This is a contradiction, as $t^*\le T'\le T$ by the hypothesis.
   The only alternative is $M_\eps(t)\le 2M$ for all $t\in [0,T']$.
 \end{proof}
 \begin{coro}\label{uniform:exist}
 Let $\delta_0$ be as in Remark \ref{M0} and $0<\eps \le \delta^2\le \delta_0^2$.
 Then there exists a $T=T\left(\frac{1}{\norm{u_0}_{H^7}},\norm{u_0}_{H^7},\lambda(0)\right)>0$,
 such that the maximal interval for wellposedness of \eqref{parab:non}, as stated in Proposition \ref{parab:prop}, contains the interval $[0,T]$ for all $\eps$ specified above. Furthermore, $M_\eps(T)\le 2M$ for $M$ from \eqref{M}.
 \end{coro}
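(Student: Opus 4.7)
The plan is to combine the a priori bounds from Corollaries \ref{apriori} and \ref{2M} with the parabolic blow-up criterion \eqref{parabolic-blow-up} to force a lower bound on the maximal time of existence $T_\eps$ that is uniform in $\eps$ and $\delta$.

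First I would observe that all the relevant constants depend only on $\|u_0\|_{H^7}$ and $\lambda_0$, not on the regularization parameters. By Remark \ref{M0}, the restriction $0<\eps\le\delta^2\le\delta_0^2$ yields $M_\eps(0)\approx \|u_0\|_{H^7}$; from \eqref{dispersion+low-norm} we likewise have $k(0)\le \|u_0\|_{H^4}+\lambda_0\le \|u_0\|_{H^7}+\lambda_0$. Plugging these bounds into the definition \eqref{M} of $M$ and into the definition of $T$ from the proof of Corollary \ref{apriori} produces $M$ and $T$ which are functions only of $\|u_0\|_{H^7}$ and $\lambda_0$, exactly the dependence advertised in the statement.

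Next, for each admissible $(\eps,\delta)$, I would let $[0,T_\eps)$ be the maximal interval of existence supplied by Proposition \ref{parab:prop}. Because $(u_0)_\delta\in\Schw\subset H^{12}$, persistence of regularity for the semilinear parabolic problem gives $\ue\in C_{[0,T')}H^{12}$ for every $T'<T_\eps$, so Corollary \ref{2M} is applicable on each such subinterval. Suppose for contradiction that $T_\eps<T$ for some admissible $\eps$. Then every $T'\in[0,T_\eps)$ also lies in $[0,T]$, and Corollary \ref{2M} yields $M_\eps(T')\le 2M$; in particular
\begin{equation*}
\|\ue(T')\|_{H^4}\le \|\ue(T')\|_{H^7}\le 2M
\end{equation*}
uniformly in $T'<T_\eps$. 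This contradicts the blow-up criterion \eqref{parabolic-blow-up}, so $T_\eps\ge T$ for every admissible $\eps$. One final application of Corollary \ref{2M} at $T'=T$ then gives $M_\eps(T)\le 2M$.

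The main obstacle is bookkeeping rather than analysis: I must verify that Corollary \ref{2M} can indeed be invoked on each $[0,T']$ with $T'<T_\eps$, which requires $\ue\in C_{T'}H^{12}$. This will follow from a standard persistence-of-regularity argument for the semilinear parabolic equation \eqref{parab:non} applied to the Schwartz-class data $(u_0)_\delta$, together with the fact that the blow-up criterion in Proposition \ref{parab:prop} is stated in the low norm $H^4$. The other delicate point is ensuring $M$ and $T$ are independent of $\eps$ and $\delta$; this is guaranteed by the remark following Proposition \ref{uniform:est} that $C_1,C_2,C_3,C_4$ depend on $\eps$ only through $M_\eps$.
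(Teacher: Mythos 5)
Your proposal is correct and follows essentially the same route as the paper: set $s=12$ in Proposition \ref{parab:prop} so that Corollary \ref{2M} applies, use Remark \ref{M0} and Corollary \ref{apriori} to see that $M$ and $T$ depend only on $\norm{u_0}_{H^7}$ and $\lambda_0$, and then invoke the $H^4$ blow-up criterion \eqref{parabolic-blow-up} to force $T_\eps>T$. Two very small remarks: your ``persistence of regularity'' step is superfluous, since Proposition \ref{parab:prop} applied directly with $s=12$ already yields $\ue\in C_{[0,T_\eps)}H^{12}$ (the blow-up criterion is stated in $H^4$ independently of $s$); and the contradiction hypothesis should be $T_\eps\le T$ rather than $T_\eps< T$ so that the conclusion $T_\eps>T$ lets you legitimately evaluate Corollary \ref{2M} at $T'=T$.
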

 \begin{proof}
   Note that by the Proposition \ref{parab:prop}, the proof reduced to an estimate of $\norm{\ue}_{H^4}$ or higher. We set $s=12$ in the Proposition \ref{parab:prop}, so that Corollary \ref{2M} applies (and by Lemma \ref{lem:regularize}, $(u_{0})_{\delta}\in H^{12}$ for $\delta>0$). \\

   We combine Remark \ref{M0} with Corollary \ref{apriori}, to conclude that the bound $M$ and a time interval $T>0$ from that Corollary are increasing in the following parameters:
   \begin{align*}
     M=M(\norm{u_0}_{H^7},\lambda(0)),\\
     T=T\left(\frac{1}{\norm{u_0}_{H^7}},\lambda(0)\right),
   \end{align*}
   and independent of $\eps$. Note, that we have used \eqref{dispersion+low-norm} to relate $k(0)$ with $\lambda$.\\

    We now conclude using Corollary \ref{2M}, that for all $T'\le T,$
    \begin{align*}
      \norm{\ue}_{C_{T'} H^4}\le M_\eps(T')\le 2M.
    \end{align*}
    Hence up to time $T$, the $H^4$ norm cannot blow up and \eqref{parab:non} must be wellposed.
 \end{proof}
  The rest of the section is organized as follows. Proposition \ref{unif:est1} can be proved directly from \eqref{parab:non} and we prove it first. The estimate \eqref{uniform:n} is more involved and is done in Sections \ref{sec:reduction} through \ref{subsec:uniform-proof}.
 \subsection{Proof of Proposition \ref{unif:est1}}
 The content of Proposition \ref{unif:est1} is a bound for the dispersion and for a low norm of the solution.
  To estimate the dispersion and the $H^4$-norm, we first estimate the time derivative of the solution via the evolution equation in \eqref{parab:non}.
   \begin{prop}\label{lem:dt}
  Suppose $\ue \in C^1_t H^4\cap C^0_t H^8$ satisfies \eqref{parab:non}. Then
  \begin{align}
\norm{\dt \ue}_{H^4}\le C(\norm{\ue}_{H^7},\eps\norm{\ue}_{H^8}).
\end{align}
\end{prop}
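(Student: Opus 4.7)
The plan is to read $\dt \ue$ directly off the regularized equation \eqref{parab:non},
\[
\dt \ue = -f(\dx^3 \ue, \ldots, \ue, x, t) - \eps\,\dx^4 \ue,
\]
and estimate the two summands separately in $H^4$. The parabolic piece is immediate: $\eps\,\norm{\dx^4 \ue}_{H^4} \le \eps\,\norm{\ue}_{H^8}$, which already fits inside a constant of the form $C(\norm{\ue}_{H^7},\eps\norm{\ue}_{H^8})$. All of the actual work therefore consists in showing
\[
\norm{f(\dx^3 \ue,\ldots,\ue,x,t)}_{H^4} \le C(\norm{\ue}_{H^7}).
\]

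For the $L^2$ piece ($k=0$), condition (A3) gives $f(\vo,x,t)\equiv 0$, so the fundamental theorem of calculus in the $\vz$ variable yields the representation
\[
f(\vz,x,t) = \int_0^1 \nabla_{\vz} f(s\vz,x,t)\cdot \vz\,ds.
\]
Bounding $\nabla_{\vz} f$ pointwise by (A1) and substituting $\vz=(\dx^3\ue,\dx^2\ue,\dx\ue,\ue)$ yields the estimate $|f(\dx^3\ue,\ldots,\ue,x,t)| \le C(\norm{\ue}_{C^3})(|\dx^3\ue|+|\dx^2\ue|+|\dx\ue|+|\ue|)$, and the Sobolev embedding $H^4 \hookrightarrow C^3$ (a fortiori $H^7\hookrightarrow C^3$) converts the $C^3$ norm into $\norm{\ue}_{H^7}$, giving an $L^2$ bound by $C(\norm{\ue}_{H^7})\norm{\ue}_{H^3}$.

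For $1\le k\le 4$, I would apply $\dx^k$ to $f(\dx^3\ue,\ldots,\ue,x,t)$ via the chain rule (Fa\`a di Bruno). The expansion is a sum of products, each consisting of a coefficient $\partial_{\vz}^{\beta}\dx^{\gamma} f$ with $|\beta|,\gamma\le k\le 4$ (well within the $C^{11}_{\vz}W^{11,\infty}_x$ regularity supplied by (A1)) multiplied by a product of derivatives $\dx^{j_1}\ue\cdots\dx^{j_p}\ue$ whose combined order is at most $3+k\le 7$; in particular a single factor can be as large as $\dx^7 \ue$ only when $k=4$ and all four derivatives land on the $\dx^3\ue$ slot. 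Placing the factor carrying the greatest number of derivatives in $L^2$ and all remaining factors in $L^\infty$ via the embedding $H^3\hookrightarrow L^\infty$, each product is dominated by $C(\norm{\ue}_{H^7})\norm{\ue}_{H^7}$. Summing over $0\le k\le 4$ and combining with the parabolic bound yields the claim. The only obstacle here is pure bookkeeping --- arranging the Fa\`a di Bruno expansion so that exactly one factor sits in $L^2$ and every other factor sits in $L^\infty$; no cancellation, integration by parts, or gauge change is required, which is why this estimate can sit at the outset of the nonlinear section and be invoked freely in the proofs of Propositions \ref{unif:est1} and \ref{uniform:est}.
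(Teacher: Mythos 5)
The paper omits the proof of this proposition, offering only a one-sentence sketch ("the evolution equation involves at most three derivatives inside $f$, and the parabolic term has an $\varepsilon$ and a fourth derivative"), and your argument is exactly the expansion that sketch was pointing at: read $\dt \ue$ off the equation, bound the parabolic term by $\eps\norm{\ue}_{H^8}$, handle the $L^2$ piece of $f$ via (A3) and the Fundamental Theorem of Calculus, and handle $\dx^k f$ for $1\le k\le 4$ via Fa\`a di Bruno with one factor in $L^2$ and the rest in $L^\infty$. Your bookkeeping is correct — the worst single factor is $\dx^7\ue$ when $k=4$, the coefficients $\partial_z^\beta\dx^\gamma f$ stay within the (A1) regularity budget, and Sobolev embedding closes the loop — so this is a correct and complete fill-in of the omitted proof, using the same approach the paper intends.
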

We omit the proof as it is fairly obvious, since the evolution equation involves at most three derivatives inside the function
$f,$ and the parabolic term has an $\varepsilon$ and a fourth derivative.

We next set down some slightly unconventional notation in order to simplify chain rule computations.
\begin{rem}\label{rem:z}
  Denote $z_{-1}=x$ and $z_{-2}=t$. so that $(\vz, z_{-1},z_{-2})=(\dx^3 \vue,x,t)$. This way $$\dx \vz = (\dx[\dx^3\vu^\eps],1,0)$$
\end{rem}
\begin{proof}[Proof of Proposition \ref{unif:est1}]
  From \eqref{dispersion+low-norm}, we need to analyze $\norm{u_{\varepsilon,\delta}(t)}_{H^4}$ and $\frac{1}{\lambda(t')}$, which we do separately.

  From the Fundamental Theorem of Calculus
 \begin{align*}
   u_{\varepsilon,\delta}(t) = (u_{0})_{\delta} + \int_0^t \dt u_{\varepsilon,\delta}(t') dt'.
 \end{align*}
 Taking $H^4$-norms implies \begin{align*}
   \norm{\ue(t)}_{H^4}\le \norm{(u_{0})_{\delta}}_{H^4}+ \int_0^t\norm{\dt u_{\varepsilon,\delta}(t')}_{H^4} dt'.
 \end{align*}
Hence from Proposition \ref{lem:dt},
\begin{align}\label{dnorm-est1}
  \norm{\ue(t)}_{H^4} \le \norm{(u_{0})_{\delta}}_{H^4} + C(M_\eps(t))t.
\end{align}

We now proceed to an estimate of $\lambda(u_{\varepsilon,\delta}(t))$, which is deduced by a similar method.
First, from the definition \eqref{Dispersion} and the Fundamental Theorem of Calculus,
\begin{align*}
  f_{z_3} (\dx^3 \vue(x,t)) = f_{z_3} (\dx^3 (\vu_{0})_{\delta}(x))+\int_0^t \dt [f_{z_3} (\dx^3 \vue)(x,t')] dt'.
\end{align*}
Expanding the time derivative above yields
\begin{align*}
  f_{z_3} (\dx^3 \vue)\ge  f_{z_3} (\dx^3 (\vu_{0})_{\delta}) - t\sum_{j=-2}^3 \norm{f_{z_3,z_j}(\dx^3\vue)\dt [z_j(x,t)]}_{L^\infty_{t,x}},
\end{align*}
where we have used the notation from Remark \ref{rem:z}. Thus
\begin{align*}
  f_{z_3} (\dx^3 \vue)\ge  f_{z_3} (\dx^3 (\vu_{0})_{\delta}) - t C(\norm{\ue}_{L^\infty_{t}W^{3,\infty}_x})(1+\norm{\dt \ue}_{L^\infty_{t}W^{3,\infty}_x}).
\end{align*}
Hence by Sobolev embedding we obtain
\begin{align*}
  \frac{1}{f_{z_3} (\dx^3 \vue)} \le \frac{1}{f_{z_3} (\dx^3 (\vu_{0})_{\delta})\left[1-T C(\norm{\ue}_{W^{1,\infty}_t H^4_x},\frac{1}{f_{z_3} (\dx^3 (\vu_{0})_{\delta})})\right]}.
\end{align*}
Using Proposition \ref{lem:dt} to estimate $\dt u_{\varepsilon,\delta},$ and using \eqref{Dispersion}, allows us to express the above inequality as
$$
\lambda(t) \le \lambda(0) \frac{1}{1-T C(M_\eps(t),k(0))}.
$$
Combining this result with \eqref{dnorm-est1} concludes the proof.
\end{proof}

\subsection{Reduction to the linear estimate}\label{sec:reduction}

To prove Proposition \ref{uniform:est}, we aim to use linear estimate \eqref{lin:ref} from Theorem \ref{thm:lin} for the equation that $\dt \dx^n \ue$ satisfies for $n\ge 3$. We begin with differentiation of \eqref{parab:non}.

\begin{rem}\label{rem:fx-vs-dxf}
  Note that we distinguish between $f_x(\dx^3\vue) = [\dx f](\vz,x,t)\mid_{\vz = (\dx^3 \ue,\ldots \ue)}$ and
  \begin{align*}
    \dx [ f(\dx^3\vue )] = \sum_{i=0}^3 f_{z_i}(\dx^3\vue) \dx^{i+1} \ue + f_x(\dx^3\vue) = \sum_{i=-1}^3 f_{z_i}(\dx^3\vue),
  \end{align*}
   where we use the notation from Remark \ref{rem:z} (i.e. $z_{-1}=x$).

   Occasionally, to be more efficient we may omit the variable $z$ and denote derivatives of $f$ with just the indices, i.e. $f_{-2}=f_t$ and $f_{-1,2}=\frac{\partial^2 f}{\partial x \partial {z_2}},$ for example.
\end{rem}

\begin{prop}\label{prop:dt3}
  Suppose $\ue$ solves \eqref{parab:non}. Then \begin{align}\label{nonlinear-dx3}
  & \dt \dx^3 \ue + f_{z_3}\dx^6 \ue + \left( f_{z_2}+3\dx [ f_{z_3}(\dx^3\vue)]\right)\dx^5 \ue + \tld f(\dx^4\vue) =-\eps\dx^7 \ue,
\end{align}
where %
$$\tld f\in C^1_t C^{7}_z W^{7,\infty}_x.$$
\end{prop}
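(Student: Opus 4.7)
The plan is to apply $\dx^3$ directly to \eqref{parab:non} and carefully isolate the top-order terms containing $\dx^5\ue$ and $\dx^6\ue$, packaging everything else into $\tld f$. The parabolic term becomes $-\eps\dx^7\ue$ trivially, $\dt$ commutes with $\dx^3$, so the real work is expanding $\dx^3[f(\dx^3\vue)]$ by iterated chain rule.

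First I would compute $\dx f = f_{z_3}\dx^4\ue + g_1(\dx^3\vue)$, where $g_1$ gathers $f_{z_2}\dx^3\ue+f_{z_1}\dx^2\ue+f_{z_0}\dx\ue+f_x$ and depends only on derivatives of $\ue$ up to order three. A second differentiation gives $\dx^2 f = f_{z_3}\dx^5\ue + g_2(\dx^4\vue)$, where $g_2$ collects $(\dx f_{z_3})\dx^4\ue$ together with $\dx g_1$, both of which are smooth functions of $\dx^4\vue$. The third differentiation then produces
\begin{align*}
\dx^3 f \;=\; f_{z_3}\dx^6\ue + \bigl[(\dx f_{z_3}) + g_{2,z_4}\bigr]\dx^5\ue + \tld f(\dx^4\vue),
\end{align*}
where the $\dx^5\ue$ coefficient has a direct contribution from differentiating the $f_{z_3}$ prefactor and a chain-rule contribution $g_{2,z_4}$ arising from the $z_4$-dependence of $g_2$.

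The crucial algebraic step is to evaluate $g_{2,z_4}$. Written as a polynomial in $z_4$ (of degree at most two), one has schematically $g_2 = f_{z_3,z_3}\,z_4^2 + \bigl(\dx[f_{z_3}] - f_{z_3,z_3}z_4\bigr)z_4 + g_{1,z_3}\,z_4 + (\text{terms independent of }z_4)$. Computing $\partial_{z_4}g_2$ and using the equality of mixed partials $f_{z_i,z_j}=f_{z_j,z_i}$ to recognize $g_{1,z_3}=f_{z_2}+\dx[f_{z_3}] - f_{z_3,z_3}z_4$, the $f_{z_3,z_3}z_4$ contributions cancel and I expect to obtain $g_{2,z_4}=f_{z_2}+2\dx[f_{z_3}]$. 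Combined with the outside $(\dx f_{z_3})$ this yields the advertised coefficient $f_{z_2}+3\dx[f_{z_3}(\dx^3\vue)]$.

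Finally, the regularity claim $\tld f\in C^1_t C^7_z W^{7,\infty}_x$ follows from a derivative count: $\tld f$ is a polynomial in $\dx^j\ue$ for $j\le 4$ whose coefficients are at most third-order $z$-derivatives (and $x$-derivatives of comparable order) of $f$. Condition (A1) supplies $f\in C^1_t C^{11}_{\vz}W^{11,\infty}_x$, so losing three orders of smoothness to the chain rule leaves $\tld f$ in the claimed class with comfortable room. The only real obstacle is the combinatorial bookkeeping leading to $g_{2,z_4}$, where careful use of the symmetry of mixed partials is what produces the precise numerical factor of $3$ in front of $\dx[f_{z_3}]$; every other piece of the argument is a routine application of the chain and Leibniz rules.
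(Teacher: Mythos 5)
Your proposal is correct and reaches the coefficient by a different route than the paper. The paper writes the full third-derivative expansion of $f(\dx^3\vue)$ at once (the Fa\`a di Bruno-style sum in \eqref{davidNumberedThis}) and reads off the coefficient of $\dx^5\ue$ directly; the factor $3$ appears there as the combinatorial weight of the $f_{i,j}\dx^2 z_i\dx z_j$ block. You instead iterate $\dx$ one step at a time, bundling remainders into $g_1$ and $g_2$, and obtain the coefficient of $\dx^5\ue$ as $(\dx f_{z_3})+g_{2,z_4}$, with the $3$ emerging as $1+2$ after a cancellation. I verified your cancellation: with $g_2=(\dx f_{z_3})z_4+\dx g_1$, differentiation in $z_4$ gives $g_{2,z_4}=2f_{z_3,z_3}z_4+\bigl(\dx f_{z_3}-f_{z_3,z_3}z_4\bigr)+g_{1,z_3}$, and symmetry of mixed partials gives $g_{1,z_3}=f_{z_2}+\bigl(\dx f_{z_3}-f_{z_3,z_3}z_4\bigr)$, so indeed $g_{2,z_4}=f_{z_2}+2\dx f_{z_3}$ as you predicted. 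Both routes cost three orders of smoothness via (A1), placing $\tld f$ in $C^1_t C^{8}_z W^{8,\infty}_x$ and so comfortably inside the stated $C^1_t C^7_z W^{7,\infty}_x$. The advantage of the paper's direct expansion is that it sets up the indexed sum \eqref{diff:n} that the paper reuses to handle $\dx^n f$ for general $n$ in Lemma \ref{lem:deriv}; your iterative decomposition is more elementary and makes the numerical factor transparent, but it rests on a cancellation that is easy to miscount and does not scale as cleanly to higher $n$.
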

\begin{rem}
For completeness, we present the structure of $\tld f$ in the proof.
\end{rem}

\begin{proof}
  Differentiating $f$ once we obtain
  \begin{align*}
    \dx [f] = \sum_{i=-1}^3 f_i \dx z_i.
  \end{align*}
 In the next step we have a product rule in addition to the chain rule:
  \begin{align*}
    \dx^2 [f] = \sum_{i,j=-1}^3 f_{i,j}\dx z_i \dx z_j + \sum_{i=0}^3 f_i \dx^2 z_i.
  \end{align*}
  Note that we eliminated terms that are obviously equal to zero, such as $\dx^2 z_{-1}=\dx^2 (x)=0$.
We then claim that applying a third derivative produces the following expression:
  \begin{equation}\label{davidNumberedThis}
    \dx^3[f]  = \sum_{i=0}^3 f_i \dx^3 z_i + \sum_{i,j\le 3} f_{i,j} \dx^2 z_i \dx z_j\\
     + 2 \sum_{i,j\le 3} f_{i,j}\dx^2 z_i \dx z_j + \sum_{i,j,k\le 3} f_{i,j,k} \dx z_i\dx z_j\dx z_k.
  \end{equation}
  The first two terms on the right-hand side of \eqref{davidNumberedThis} include derivatives of the term $ \sum_{i=0}^3 f_i \dx^2 z_i$, while in the third term on the right-hand side we have introduced a coefficient $2$ obtained from changing the index of terms with $\dx^2 z_j \dx z_i$ from $i$ to $j$. Further note that the second and third terms on the right-hand
 side may be combined.

  From now on, most terms end up in the $\tld f$ function that we will define below. We make a couple of observations. First, the only term with $6$ derivatives on $u_{\varepsilon,\delta}$ is $f_3 \dx^3 z_3$.\\

  Second, we inspect the terms with $\dx^5 u_{\varepsilon,\delta}.$ They are
  \begin{align*}
    f_2 \dx^3 z_2 + 3\sum_{j=-1}^3 f_{3,j} \dx^2 z_3 \dx z_j =f_2 \dx^3 z_2 + 3\dx[ f_3(\vec{z})].
  \end{align*}

  Finally, all the remaining terms combine into the term we call $\tld f$, written explicitly as follows:
  \begin{align}\label{tldf}
  \tld f = \sum_{i,j,k=-1}^3 f_{i,j,k} \dx z_i \dx z_j \dx z_k + 3 \sum_{i<3,j\le 3} f_{i,j} \dx^2 z_i \dx z_ j + \sum_{i=0}^1 f_i \dx^3 z_i. %
\end{align}
  \end{proof}
From now on the coefficient of the second highest space derivative $\dx^5 \ue$ above will change in the ``linear fashion'' (i.e. similar in structure to differentiation of \eqref{lin}). We continue with differentiation until this pattern appears for lower order terms by considering $n\ge 7$:
\begin{lem}\label{lem:deriv}
  Let $\ue$ solve \eqref{parab:non}. Then $\dx^n\ue$ satisfies the following expression for $n\ge 7:$
  \begin{align}\label{differentiate-nonlinear-0}
\begin{split}
  & \dt \dx^n \ue + f_{z_3}(\dx^3\vue)\dx^{n+3} \ue + \left( f_{z_2}(\dx^3\vue)+n\dx [ f_{z_3}(\dx^3\vue)]\right)\dx^{n+2} \ue\\
&  + a_{1,n}(\dx^5\vue)\dx^{n+1} \ue + a_{0,n}(\dx^6\vue)\dx^{n} \ue=-\eps\dx^{n+4} \ue+\tld f_n(\dx^{n-1}\vue),
\end{split}
\end{align}
where the  functions $a_{0,n}$, $a_{1,n}$ and $\tld f_n$ are in $C^1_t C^{11-n}_z W^{11-n,\infty}_x$.
\end{lem}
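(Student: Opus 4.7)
My plan is to prove the lemma by induction on $n$, with the inductive step $n \mapsto n+1$ obtained by applying $\dx$ to \eqref{differentiate-nonlinear-0} and the base case $n = 7$ established by iterating four differentiations starting from \eqref{nonlinear-dx3}.

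For the inductive step, I would differentiate the equation at level $n$ in $x$ and regroup by powers of $\dx^{\cdot}\ue$. The product rule immediately yields that the coefficient of $\dx^{n+4}\ue$ is $f_{z_3}$ and the coefficient of $\dx^{n+3}\ue$ becomes $\dx f_{z_3} + f_{z_2} + n\dx f_{z_3} = f_{z_2} + (n+1)\dx f_{z_3}$, matching the claimed form at level $n+1$. Setting
\[
a_{1,n+1} := \dx(f_{z_2} + n\dx f_{z_3}) + a_{1,n}, \qquad a_{0,n+1} := \dx a_{1,n} + a_{0,n},
\]
one checks that $\dx f_{z_2}$ depends on up to $\dx^4 u$, $\dx^2 f_{z_3}$ on up to $\dx^5 u$, and $\dx a_{1,n}$ on up to $\dx^6 u$ (using the inductive hypothesis that $a_{1,n}$ is a function of $\dx^5\vue$), so $a_{1,n+1}(\dx^5\vue)$ and $a_{0,n+1}(\dx^6\vue)$ have the required dependencies. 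The residual $\tld f_{n+1} = \dx \tld f_n + \dx a_{0,n}\cdot\dx^n\ue$ is then a function of $\dx^n\vue$: the first summand because $\tld f_n$ depends on $\dx^{n-1}\vue$, and the second because $\dx a_{0,n}$ has highest derivative $\dx^7 u$, which for $n\ge 7$ lies inside $\dx^n\vue$.

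The base case $n = 7$ I would handle by direct computation: differentiating \eqref{nonlinear-dx3} four times, reading off at each intermediate level the top four coefficients in the claimed form and absorbing the remaining terms into the residual. The hard part will be verifying this base case carefully, because the iterated differentiation from $n = 3$ produces cross-terms of the form $C(\dx^k\vue)\cdot\dx^j u \cdot \dx^m u$ with $j$ as high as $8$, and these must be reorganized by identifying the higher-derivative factor with one of $\dx^{n+1}\ue$ or $\dx^n\ue$ so that the accompanying coefficient depends only on $\dx^6\vue$. Finally, the regularity assertion $a_{0,n}, a_{1,n}, \tld f_n \in C^1_t C^{11-n}_z W^{11-n,\infty}_x$ follows from Condition (A1): each $x$-differentiation consumes one order of $\vz$- and $x$-smoothness of $f$, and we perform exactly $n$ such differentiations starting from the budget $f \in C^1_t C^{11}_z W^{11,\infty}_x$.
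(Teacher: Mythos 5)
Your inductive step is exactly the mechanism the paper uses: differentiating the equation at level $n$ and regrouping gives the same recursions as the paper's formulas \eqref{higher-coeff}--\eqref{f-n}, and your dependency counting for $a_{1,n+1}$, $a_{0,n+1}$, $\tld f_{n+1}$ is correct, including the observation that the term $\dx a_{0,n}\cdot\dx^n\ue$ can be folded into $\tld f_{n+1}(\dx^n\vue)$ precisely when $n\ge 7$ (since $\dx a_{0,n}$ reaches up to $\dx^7\ue$). Your regularity bookkeeping matches the paper's too.

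The one place where your plan as stated is slightly misleading is in the handling of the base case. You say you would read off ``at each intermediate level the top four coefficients in the claimed form and absorb the remaining terms into the residual.'' But the claimed form of Lemma \ref{lem:deriv} genuinely fails for $4\le n\le 6$: the same computation you do in the inductive step would produce a residual containing $\dx a_{0,n}\cdot\dx^n\ue$, and the $\dx^7\ue$ inside $\dx a_{0,n}$ is then too high to live in $\tld f_{n+1}(\dx^n\vue)$ for those $n$. You evidently sense this (``cross-terms with $j$ as high as $8$ must be reorganized''), and your proposed fix --- peeling off the high-derivative factor as the $\dx^{n+1}\ue$ or $\dx^n\ue$ to be multiplied by a coefficient --- is the right move. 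The paper formalizes exactly this by first proving a weaker intermediate lemma (its Lemma \ref{lem:deriv:mild}) allowing $a_{1,n}(\dx^{n+1}\vue)$, $a_{0,n}(\dx^n\vue)$ for all $n\ge 3$, then doing a one-shot combinatorial analysis of the Fa\`a di Bruno expansion of $\dx^7[f]$ (its formula \eqref{diff:n}) to pin down $a_{1,7}(\dx^5\vue)$ and $a_{0,7}(\dx^6\vue)$; the constraint $\sum i_l = n$ with $j_l\le 3$ is what forces the remaining factors to have at most $\dx^5\ue$, uniformly in $n$. That systematic expansion is cleaner than four rounds of by-hand regrouping and is the main thing you would need to borrow from the paper to make the base case airtight.
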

\begin{rem}
  Note that the number $11$ in the index $11-n$ in the regularity above comes from the condition (A1) in \eqref{coeff-bdd}. Therefore a smoother nonlinear function $f$ would allow $n>11$.
\end{rem}
Essentially, Lemma \ref{lem:deriv} is a tedious application of the chain rule, a.k.a. Faa Di Bruno formula. We do this in full detail, but we first make some reductions.\\

Once the equation \eqref{differentiate-nonlinear-0} is known to be valid for some $n$, say $n=7$, 
we may differentiate for higher $n$ inductively getting the following formulas:
\begin{align}\label{higher-coeff}
\begin{split}
  a_{1,n+1}(\dx^5\vue)& =a_{1,n}(\dx^5\vue)+\dx\left[ f_{z_2} + n \dx[f_{z_3}(\dx^3\vue)]\right],\\
  a_{0,n+1}(\dx^6\vue)& =a_{0,n}(\dx^6\vue)+\dx[a_{1,n}(\dx^5\vue],
\end{split}\\
    \tld f_{n+1}(\dx^{n}\vue)& =\dx[\tld f_{n}(\dx^{n-1}\vue)] + \dx[a_{0,n}(\dx^6\vue)]\dx^n\ue \label{f-n}.
  \end{align}
However, to establish the structure of \eqref{differentiate-nonlinear-0} initially for $n=7$ requires doing a 
full expansion of terms of order $n$ similar to \eqref{davidNumberedThis}. Before doing this we make some
calculations to prepare.

For the terms of order $n+3$ and the terms of order $n+2,$ 
the structure in \eqref{differentiate-nonlinear-0} appears as early as $n=3$, which is why we 
have established Proposition \ref{prop:dt3} first. We thus establish such a lemma first by differentiating \eqref{nonlinear-dx3} $n-3$ times and focusing on higher order terms.
\begin{lem}\label{lem:deriv:mild}
Let $\ue$ solve \eqref{parab:non}. Then $\dx^n\ue$ satisfies the following expression for $n\ge 3:$
  \begin{align}\label{differentiate-nonlinear-top}
\begin{split}
  & \dt \dx^n \ue + f_{z_3}(\dx^3\vue)\dx^{n+3} \ue + \left( f_{z_2}(\dx^3\vue)+n\dx [ f_{z_3}(\dx^3\vue)]\right)\dx^{n+2} \ue\\
&  + a_{1,n}(\dx^{n+1}\vue)\dx^{n+1} \ue + a_{0,n}(\dx^{n}\vue)\dx^{n} \ue=-\eps\dx^{n+4} \ue+\tld f_n(\dx^{n-1}\vue)
\end{split}
\end{align}
where the  functions $a_{0,n}$, $a_{1,n}$ and $\tld f_n$ are in $C^1_t C^{11-n}_z W^{11-n,\infty}_x$.
\end{lem}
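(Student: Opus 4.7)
The plan is to proceed by induction on $n \ge 3$, using Proposition \ref{prop:dt3} as the base case. For $n = 3$, that proposition already identifies the correct coefficients of $\dx^6\ue$ and $\dx^5\ue$; what remains is to extract $a_{1,3}$ and $a_{0,3}$ from the remainder $\tld f(\dx^4 \vue)$. I would do this by a two-step Taylor expansion in the top $z$-slots,
\[
\tld f(\dx^4 \vue) = \tld f(0, \dx^3 \vue) + \dx^4 \ue \int_0^1 \partial_{z_4} \tld f(s \dx^4 \ue, \dx^3 \vue)\, ds,
\]
and an analogous expansion in $z_3$ applied to $\tld f(0, \dx^3 \vue)$. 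Condition (A1) guarantees the smoothness of the resulting $\tld f_3(\dx^2 \vue)$, $a_{0,3}(\dx^3 \vue)$, and $a_{1,3}(\dx^4 \vue)$.

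For the inductive step I would differentiate \eqref{differentiate-nonlinear-top} once in $x$. The time derivative and the $-\eps \dx^4$ term commute with $\dx$, while the spatial terms split by the product rule. Collecting the top two coefficients produces $f_{z_3}$ on $\dx^{(n+1)+3}\ue$ (unchanged) and
\[
\dx[f_{z_3}] + (f_{z_2} + n\dx[f_{z_3}]) = f_{z_2} + (n+1)\dx[f_{z_3}]
\]
on $\dx^{(n+1)+2}\ue$, which is precisely the structure required at level $n+1$.

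The main obstacle is the bookkeeping for the two lower coefficients. Applying the chain rule to $a_{1,n}(\dx^{n+1} \vue)$ produces
\[
\dx[a_{1,n}] = \partial_{z_{n+1}} a_{1,n}(\dx^{n+1}\vue) \cdot \dx^{n+2} \ue + \text{(terms depending only on } \dx^{n+1}\vue\text{)}.
\]
Hence $\dx[a_{1,n}] \cdot \dx^{n+1} \ue$ contains a cross-term of the form $[\partial_{z_{n+1}} a_{1,n}(\dx^{n+1}\vue) \cdot \dx^{n+1}\ue] \cdot \dx^{n+2}\ue$. I would re-read this cross-term as a contribution to the coefficient of $\dx^{n+2}\ue = \dx^{(n+1)+1}\ue$, so that it joins $a_{1,n}$ and $\dx[f_{z_2} + n\dx[f_{z_3}]]$ to form the new $a_{1,n+1}(\dx^{(n+1)+1}\vue)$. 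The analogous cross-term arising from $\dx[a_{0,n}]\cdot \dx^n\ue$ would be similarly reassigned into $a_{0,n+1}$. The leftover pieces of $\dx[a_{1,n}]\cdot \dx^{n+1}\ue$, together with $a_{0,n}\cdot \dx^{n+1}\ue$, then form $a_{0,n+1}(\dx^{(n+1)}\vue)$, and the residual terms together with $\dx[\tld f_n]$ form $\tld f_{n+1}(\dx^{(n+1)-1}\vue)$.

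Regularity is tracked inductively: condition (A1) provides $f \in C^1_t C^{11}_z W^{11,\infty}_x$, and each differentiation in $x$ combined with chain-rule application costs one unit of regularity in both $z$ and $x$, so after $n$ such steps the coefficients lie in the claimed class. The delicate point throughout is the cross-term migration; without reassigning those mixed terms, the dependence of $a_{0,n+1}$ would inflate to $\dx^{n+2}\vue$, which would violate the structure demanded at level $n+1$ and break the induction.
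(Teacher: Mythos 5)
Your proof is correct, and it takes a genuinely different route from the paper's. The paper's proof of Lemma \ref{lem:deriv:mild} is a direct (non-inductive) argument: differentiate the $n=3$ equation from Proposition \ref{prop:dt3} a total of $n-3$ times, observe that only one monomial can contribute $\dx^{n+3}\ue$, count the two sources that produce the $\dx^{n+2}\ue$ coefficient, and then sort all of the remaining monomials by their single highest-order factor. In this approach each monomial in the Faa Di Bruno expansion has an unambiguous top derivative, so the classification into $a_{1,n}\dx^{n+1}\ue$, $a_{0,n}\dx^n\ue$, and $\tld f_n$ happens automatically and there is nothing to ``reassign.'' You instead proceed by a formal induction in $n$. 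The base case $n=3$ then forces you to unpack $\tld f(\dx^4\vue)$ from Proposition \ref{prop:dt3} via a Taylor (Fundamental Theorem of Calculus) expansion in the slots $z_4$ and $z_3$, which is a clean way to produce $a_{1,3}(\dx^4\vue)$, $a_{0,3}(\dx^3\vue)$, and $\tld f_3(\dx^2\vue)$ — something the paper leaves implicit. Your inductive step is where the two arguments most differ: applying $\dx$ to $a_{1,n}(\dx^{n+1}\vue)\dx^{n+1}\ue$ does produce a genuine cross-term $[\partial_{z_{n+1}}a_{1,n}\cdot\dx^{n+1}\ue]\,\dx^{n+2}\ue$ that, if left attached to $\dx^{n+1}\ue$, would wrongly inflate the apparent dependence of $a_{0,n+1}$; your reassignment of that term to the $\dx^{(n+1)+1}\ue$ slot is exactly the right fix and is needed precisely because you are inducting rather than classifying monomials from scratch. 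You correctly verify that after reassignment the new coefficients land in the claimed dependence classes ($a_{1,n+1}$ on $\dx^{n+2}\vue$, $a_{0,n+1}$ on $\dx^{n+1}\vue$, $\tld f_{n+1}$ on $\dx^n\vue$), and your regularity bookkeeping — each $\dx$ of the composed expression spends one $z$-derivative and one $x$-derivative of $f$, so $n$ derivatives leave coefficients in $C^1_tC^{11-n}_zW^{11-n,\infty}_x$ — matches the paper's. In short, the paper's argument is shorter because it never leaves the Faa Di Bruno picture, while yours is a more mechanical induction whose price is the cross-term migration you correctly identify; both prove the lemma, and your route is arguably closer in spirit to the later recursion \eqref{higher-coeff}--\eqref{f-n} the paper uses in the proof of Lemma \ref{lem:deriv}.
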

Note that the main difference between this lemma and Lemma \ref{lem:deriv} is the structure of the coefficients $a_{1,n}$ and $a_{0,n}$. In this lemma we simply collect all terms of order $n+1$, $n$ and below into $a_{1,n}\dx^{n+1}\ue$, $a_{0,n}\dx^{n}\ue$ and $\tld f_n$, respectively, without excluding impossible terms.
\begin{proof}
Differentiate the equation \eqref{davidNumberedThis} $n-3$ times aiming to get a result similar to \eqref{nonlinear-dx3}. First, the only way to obtain the term $\dx^{n+3}\ue$
when differentiating \eqref{nonlinear-dx3} $\dx^{n-3}$ times is to keep the $f_{z_3}$ term and place all derivatives on $\dx^6\ue$.

  Second, for the $\dx^{n+2}\ue$ term, a part of the coefficient comes from differentiating the
  $f_{z_2}+ 3 \dx[f_{z_3}(\vue)]\cdot\dx^5\vue$ term and the remaining
  $(n-3)\dx[f_{z_3}(\vue)]$ comes from differentiating the $f_{z_3}\dx^6\ue$ term.

Third, note that all the coefficients in the equation \eqref{differentiate-nonlinear-0} are obtained from $n$ derivatives placed on the nonlinear function $f$ from \eqref{generalEquation}. By the condition $(A1)$ we have assumed $f(\vz,x,t) \in C^1_t C^{11}_{\vz}W^{11,\infty}_x$ and hence $\partial^\alpha_z\partial^\beta_x f(\vz,x,t) \in C^1_t C^{11-n}_{\vz}W^{11-n,\infty}_x$ for $|\alpha|+\beta=n$.

Finally, we arrange all terms of order $n+1$ in the form $a_{1,n}\dx^{n+1}\ue$, terms of order $n$ as $a_{0,n}\dx^{n}\ue$ and terms of order less than $n$ as $\tld f_n$.
\end{proof}
We now return to the explicit analysis of $a_{1,n}$, $a_{0,n}$ and $\tld f_n$.
In order to track the dependence of the coefficients of order $7$ and higher, we need an expansion of all the possible terms that arise in $\dx^n[f]$ similar to \eqref{davidNumberedThis} that works for all $n\ge 1$.

 Since the expansion is messier, we will change the notation and explain this change in the case of $n=3$. In order to keep track of terms of the type $\dx^\alpha\dz^\beta f$, we organize them by the index $k=|\alpha|+|\beta|$. The terms containing $\dx^\alpha\dz^\beta f$ will be paired with a polynomial of degree $k$ in derivatives of $u$, which we need to label. For each of the $\beta$ derivatives, which we enumerate with a label $l$, there are $4$ choices that each $z$-derivative produces: $z_{-1}=x$, $z_0=u,$ \ldots, $z_3=\dx^3u$ (by the notation of the Remarks \ref{rem:z} and \ref{rem:fx-vs-dxf}). We reserve the subscript $j$ for each of those choices, i.e. $j=0$ will lead to a $u$ term, rather than a $\dx u$ term from $j=1$.

 Finally, if we include $z_{-1}=x$ each of the $n$ derivatives hits a $z$ term. 
 For example a term $$f_{1,0,-1}(\vz)\dx^2 z_{1} \dx z_0 \dx z_{-1}$$ 
 is obtained by differentiating the nonlinear function $f$ 
 three times in direction $z_1$, $z_0$ and $z_{-1}=x$, followed by one more derivative of the function $z_1=\dx u$ to get to $\dx^2 z_1$. We label the index $\dx^2 z_2$ as $i_1=2$, the index of $z_0$ as $i_2=1$ and the index of $z_{-1}$ as $i_3=1$ proceeding from highest index to lowest. With this language in mind, $\dx^n f$ can be expressed as:
\begin{align}\label{diff:n}
  \dx^n[f]=\sum_{k=1}^n
  \sum_{\substack{ i_1\ge i_2\ge\ldots \\
            \ge i_k\ge 1;\\
            \sum_{m=1}^k i_l=n}}
            \sum_{l=1}^k\sum_{\substack{-1\le j_l\le 3} }
    C_{\vec{i},\vec{j}}  f_{j_1,\ldots j_k}(z)\dx^{i_1}z_{j_1}\cdots \dx^{i_k} z_{j_k}.%
\end{align}
Note that the $C_{i,j}$ parameters appear from the rearrangement of terms and product rule. %

We demonstrate the change of notation from the case of $n=3$ in \eqref{davidNumberedThis}. The terms for $k=1$ are $\sum_{i=0}^3 f_i \dx^3 z_i$, which we relabel as
\begin{align*}
  \sum_{i=0}^3 f_i \dx^3 z_i = \sum_{j_1=-1}^3 f_{j_1} \dx^3 z_{j_1}.
\end{align*}
The terms for $k=2$ are relabeled as
\begin{align*}
  3 \sum_{i,j\le 3} f_{i,j}\dx^2 z_i \dx z_j = \sum_{j_1=-1}^3\sum_{j_2=-1}^3 3 f_{j_1,j_2} \dx^2 z_{j_1}\dx z_{j_2}.
\end{align*}
and the terms for $k=3$ become
\begin{align*}
  \sum_{i,j,k\le 3} f_{i,j,k} \dx z_i\dx z_j\dx z_k = \sum_{j_1,j_2,j_3=-1}^3 f_{j_1,j_2,j_3} \dx z_{j_1}\dx z_{j_2}\dx z_{j_3}.
\end{align*}
\begin{proof}[Proof of Lemma \ref{lem:deriv}]
By  Lemma \ref{lem:deriv:mild} and the inductive formula \eqref{higher-coeff} the proof reduces to the analysis of 
the coefficients $a_{1,7}$ and $a_{0,7}$.\\

We now return to use the notation in \eqref{diff:n} to confirm that $a_{1,7}$ depends on no more than 
five derivatives of $\ue$. It is easier to do the analysis for $k=1$, $k=2$ and greater separately:
  \begin{align*}
    \dx^7[f] & = \sum_{j_1=0}^3 f_{j_1} \dx^7 z_{j_1} + \sum_{i_1=4}^6\sum_{j_1,j_2\le3} C_{\vec{i},\vec{j}} f_{j_1,j_2} \dx^{i_1} z_{j_1}\dx^{7-i_1} z_{j_2}\\ &
    +\sum_{k=3}^7\sum_{\substack{i_1\ge i_2\ge\ldots \\
  \ge i_k\ge 1;\\ \sum_{m=1}^k i_l=7}}\sum_{l=1}^k\sum_{-1\le j_l\le 3}  C_{\vec{i},\vec{j}}f_{j_1,\ldots j_k}(z)\dx^{i_1}z_{j_1}\cdots \dx^{i_k} z_{j_k}\mid_{z=(\dx^3\vue,x,t)}.
  \end{align*}
  In the sum above we have excluded some of the vanishing terms, like $j_1=-1$ for $k=1$. We also reorganized the sum for $k=2$, relabeling $i_2=7-i_1$ and exploiting $7\ge i_1\ge i_2$.

  The terms for $i_1+j_1=10$, as well $i_1+j_1=9$, are already accounted for in \eqref{differentiate-nonlinear-0} as coefficients of $\dx^{n+3}\ue$ and $\dx^{n+2}\ue$ for $n=7$. Therefore, to obtain $a_{7,1}$, the coefficient for $\dx^8 \ue$, we need to focus on terms with $i_1+j_1=8$. Note that as $i_2\le 7-i_1\le 3$ if $k\ge 2$, it is impossible to obtain $i_2+j_2=8$ for $n=7$.

  We do an enumeration of all terms of order $8$ using $i_1+j_1=8$. If $i_1=7$, then $k=1$ to satisfy $i_1+i_2+\ldots=7$ and all $i$ terms being positive. If $i_1=6$, $j_1=2$ then $k=2$ with $i_2=2$. If $i_1=5$, $j_1=8-i_1=3$ then either $k=3$ and $i_2=i_3=1$ or $k=2$ and $i_2=2$. Explicitly, all terms of order $8$ can be listed as follows:
  \begin{align}
    f_{1}\dx^7 z_{1} + \sum_{\substack{i_1=5}}^6\sum_{j_2=-1}^3 C_{\vec{i},\vec{j}} f_{j_1=8-i_1,j_2} \dx^{i_1} z_{j_1}\dx^{7-i_1} z_{j_2}
     + \sum_{j_2,j_3=-1}^3 C_{\vec{i},\vec{j}}f_{z_3,z_{j_2},z_{j_3}}(\vz)\dx^{5}z_{3}\dx z_{j_2} \dx z_{j_3}.
  \end{align}
We now substitute $\vz=(\dx^3 \ue,\ldots \ue,x,t)$ to get:
\begin{align*}
  \dx^8\ue\cdot a_{1,7}:=\dx^8\ue\cdot [f_1+\sum_{j_2=-1}^3 C_{\vec{i},\vec{j}} f_{2,j_2}\dx^{j_2+1}\ue +\sum_{j_2=0}^3 C_{\vec{i},\vec{j}}f_{3,j_2}\dx^{j_2+2}\ue\\ + \sum_{j_2,j_3=-1}^3 C_{\vec{i},\vec{j}} f_{2,j_2,j_3}\dx^{j_2+1}\ue\dx^{j_2+1}\ue ].
\end{align*}
We now claim that $a_{1,7}=a_{1,7}(\dx^5\vue)$. Indeed, the
nonlinear function $f$ depends on at most $\dx^3 \ue$, and the highest derivative of $\ue$ possible inside is the term $\dx^{j_2+2}\ue$, where $j_2 =3$ is possible.

The analysis of the $\dx^7\ue$ coefficient, $a_{0,7}$ is similar.
\end{proof}

\begin{coro}\label{lem:remainder11-prep}
  By restricting inadmissible indices in the expansion of $\dx^n [f]$ in \eqref{diff:n} we can also get an explicit description of $\tld f_n:$ 
\begin{align*}
  \tld f_n& =
  \sum_{k=2}^{n}\sum_{\vec{i},\vec{j}\in \mathcal{S}_k^n}
    C_{\vec{i},\vec{j}}\cdot f_{j_1,\ldots j_k}(z)\dx^{i_1}z_{j_1}\cdots \dx^{i_k} z_{j_k},
  \end{align*}
  where the $k-$tuples $\vec{i}$, $\vec{j}$ of admissible indices $\mathcal{S}_k^n$ are defined by
  \begin{align*}
    \mathcal{S}_k^n =\left\{1\le l \le k; i_1\ge i_2\ge\ldots\ge i_k\ge 1;  \sum_{l=1}^k i_l=n; -1\le j_l\le 3; i_l+j_l<n;\max\{j_l,1-i_l\}\ge 0 \right\}.
  \end{align*}
\end{coro}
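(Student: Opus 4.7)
The plan is to trace through the full Fa\`a di Bruno expansion in \eqref{diff:n} and classify every term according to the order of derivative of $\ue$ it contributes, matching the four leading coefficients $f_{z_3}\dx^{n+3}\ue$, $(f_{z_2}+n\dx[f_{z_3}])\dx^{n+2}\ue$, $a_{1,n}\dx^{n+1}\ue$, and $a_{0,n}\dx^{n}\ue$ isolated in Lemma \ref{lem:deriv}, so that by definition $\tld f_n$ is whatever survives after these four strata are removed. For a summand $C_{\vec{i},\vec{j}}f_{j_1,\ldots,j_k}(\vz)\dx^{i_1}z_{j_1}\cdots\dx^{i_k}z_{j_k}$ with $\sum i_l = n$, the order of the derivative of $\ue$ appearing in the $l$-th factor is $i_l+j_l$ (since $z_{-1}=x$ contributes no derivative of $\ue$ and $\dx z_{-1}=1$), so a factor reaches derivative order $n$ or higher precisely when $i_l+j_l \ge n$ with $j_l \ge 0$.

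Next I would observe that for $n \ge 7$ only the index $l=1$ can meet $i_l+j_l \ge n$. Indeed, if $k\ge 2$ and some $l\ge 2$ satisfied $i_l+j_l \ge n$, then $i_l \ge n-3$, and together with $i_1 \ge i_l$ and $i_1+i_l \le n$ this would force $i_1 \le 3$, contradicting $i_1 \ge n-3 \ge 4$. A case analysis on $i_1+j_1 \in \{n,n+1,n+2,n+3\}$ then matches the remaining ``heavy'' summands to the four leading coefficients: $i_1+j_1=n+3$ forces $k=1$ and yields $f_{z_3}\dx^{n+3}\ue$; $i_1+j_1=n+2$ collects into $(f_{z_2}+n\dx[f_{z_3}])\dx^{n+2}\ue$ after summing product-rule contributions, the combinatorial coefficient $n$ having already been tracked inductively through Proposition \ref{prop:dt3} and the recursion \eqref{higher-coeff}; and $i_1+j_1=n+1$ and $i_1+j_1=n$ fold into $a_{1,n}\dx^{n+1}\ue$ and $a_{0,n}\dx^n\ue$ respectively. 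Everything left over has $i_l+j_l<n$ for every $l$, which is the defining inequality inside $\mathcal{S}_k^n$.

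Two residual bookkeeping items remain. First, any factor $\dx^{i_l}z_{-1}=\dx^{i_l}(x)$ with $i_l\ge 2$ is identically zero and must be discarded from the expansion; the clean way to encode this is the condition $\max\{j_l,1-i_l\}\ge 0$, which admits $j_l=-1$ only when $i_l=1$ (where the factor is $\dx x = 1$). Second, the single-factor layer $k=1$ contributes nothing to $\tld f_n$, since its terms $f_{j_1}\dx^n z_{j_1}$ either land in one of the four leading coefficients (for $j_1\in\{0,1,2,3\}$) or vanish (for $j_1=-1$ and $n\ge 2$); this justifies the lower summation bound $k\ge 2$. Reassembling the pieces reproduces the claimed explicit formula for $\tld f_n$.

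The main obstacle is the careful accounting in the $i_1+j_1=n+2$ stratum, where one must verify that the sum of the single product-rule contribution from the $k=1$ term $f_{z_2}\dx^n z_2$ together with the $k=2$ contributions of the form $C_{\vec{i},\vec{j}}f_{z_3,j_2}\dx^{n-1}z_3\cdot \dx z_{j_2}$ reproduces the exact combinatorial factor $n$ multiplying $\dx[f_{z_3}]$ predicted by the inductive recursion \eqref{higher-coeff}; the rest of the argument is straightforward multi-index inclusion-exclusion on the admissible set $\mathcal{S}_k^n$.
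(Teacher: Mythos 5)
Your proposal is correct and follows essentially the same route as the paper: identify a summand's factor order as $i_l+j_l$, discard the vanishing $\dx^{i_l}(x)$ factors via $\max\{j_l,1-i_l\}\ge 0$, and recognize $\tld f_n$ as exactly the summands with $i_l+j_l<n$ for every $l$ (with $k\ge 2$, since $k=1$ forces $i_1=n$ so the term lands in a leading coefficient or vanishes). The case analysis on $i_1+j_1\in\{n,n+1,n+2,n+3\}$ and the worry about the combinatorial factor $n$ in the $\dx^{n+2}\ue$ coefficient are really re-verifications of Lemma~\ref{lem:deriv}, which the corollary presupposes rather than re-derives, so they go somewhat beyond what is strictly needed here.
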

Note that we will use this description for $n=11$ later in the paper.
\begin{proof}
  From \eqref{diff:n} we remove all terms involving more than $n-1$ derivatives of $\ue$. In the language of \eqref{diff:n} this means $i_l+j_l<n$. The indices in $\mathcal{S}_k^n$ are those that remain organized by the parameter $k$ that counts the number of $z$ derivatives on the nonlinear function $f$.
  
  Note that $k=1$, for example, leads to $i_1=n$ and hence only contain terms of order higher than $n-1$. 
  Similarly the condition $\max\{j_l,1-i_l\}\ge 0$ is just a statement that more than two derivatives annihilate $z_{-1}=x$.
\end{proof}

We aim to consider a linear problem \eqref{lin} with coefficients from \eqref{differentiate-nonlinear-0}:
\begin{align}\label{nonlinear-linear}
\begin{split}
  \begin{cases}
   \dt w + \sum_{j=0}^3 a_j(x,t)\dx^j w = -\eps\dx^4 w+\tld f_n(\dx^{n-1}\vue),\\
  w(x,0)= \partial_{x}^{n}(u_{0})_{\delta}(x),
\end{cases}\\
\text{where } a_3: =  f_{z_3}(\dx^3 \vue), \hspace{ 10 pt}
a_2: = \left( f_{z_2}+n\dx [ f_{z_3}(\dx^3\vue)]\right), \text{ and } a_j = a_{j,n}(\dx^{6-j}\vue).
\end{split}
\end{align}
Note that $w=\dx^{n}\ue$ is a solution of that linear equation and we can apply Theorem \ref{thm:lin} to it to find the
following estimate:
\begin{align}\label{energy:boot}
  \norm{\dx^{n}\ue}_{L^2}
        \le C(k_G(t))\exp\left(\int_0^t C(\tld M(t')) dt'\right)
        \left( \norm{\dx^{n}(u_{0})_{\delta}} + \norm{\tld f_n(\dx^{n-1} \vue)}_{L^1_t L^2}\right).
\end{align}
\begin{rem}\label{coeff-n}
  To emphasize the dependence of the constants $k_G$ and $\tld M$ upon the coefficients of \eqref{nonlinear-linear} and hence implicitly on  $\dx^n\vue$ we will add superscripts of $n$, such as $k^n_G$ and $\tld M^n$. 
  That is, we will denote the coefficient norms in Theorem \ref{thm:lin} for \eqref{nonlinear-linear} as $k^n_G$ and $\tld M^n$.
\end{rem}

\subsection{Remainder terms and coefficient estimates}
Estimate \eqref{energy:boot} is a crucial ingredient for the proof of Proposition \ref{uniform:est}. We have
essentially reduced the estimate of the $H^7$-norm and the $H^8$-norm to a proper estimate of coefficients captured by $k_G$ and $\tld M$, as well as by the lower order terms that we denote by $\tld f_n$. We begin with the estimate of the lower order terms $\tld f_n$ for $n=7$ and $n=8$.%

\begin{lem}\label{lem:remainder}%
  Let $\tld f_n$ be as in \eqref{differentiate-nonlinear-0}. Then the following bounds are satisfied:
  \begin{align}\label{diff:lower-terms}
  \begin{split}
     & \norm{\tld f_7}_{L^\infty_tL^2_x} \le C(\norm{\ue}_{H^7}),\\%
      & \norm{\tld f_8}_{L^\infty_tL^2_x} \le C(\norm{\ue}_{H^{7}})\norm{\ue}_{H^8}.%
  \end{split}
     \end{align}
\end{lem}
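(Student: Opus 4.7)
The plan is to apply the explicit description of $\tld f_n$ from Corollary \ref{lem:remainder11-prep} and estimate each summand directly. Every monomial has the form $C_{\vec i,\vec j}\,f_{j_1,\ldots, j_k}(\vz)\,\dx^{i_1}z_{j_1}\cdots \dx^{i_k}z_{j_k}$ with $k\ge 2$, $\sum_l i_l = n$, $i_l \ge 1$, $i_l + j_l < n$, and $\max\{j_l,\,1-i_l\}\ge 0$. The last two conditions together force each factor $\dx^{i_l}z_{j_l}$ to equal either $\dx^{m_l}\ue$ with $m_l := i_l + j_l \in \{0,\ldots,n-1\}$ (when $j_l\ge 0$) or the constant $1$ (when $j_l = -1$ and $i_l = 1$). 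By Condition (A1) and the Sobolev embedding $H^4(\R) \hookrightarrow L^\infty(\R)$, the prefactor $f_{j_1,\ldots, j_k}(\vz)$ is pointwise bounded by $C(\norm{\ue}_{H^7})$.

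For $n=7$ every factor satisfies $m_l \le 6$, so by the one-dimensional embedding $H^1(\R)\hookrightarrow L^\infty(\R)$ we have $\norm{\dx^{m_l}\ue}_{L^\infty}\le C\norm{\ue}_{H^{m_l+1}}\le C\norm{\ue}_{H^7}$. Placing the factor with largest $m_l$ in $L^2$ (also bounded by $\norm{\ue}_{H^7}$) and the remaining $k-1$ factors in $L^\infty$ yields $\norm{\prod_l \dx^{m_l}\ue}_{L^2}\le C\norm{\ue}_{H^7}^{k}$. Summing the finitely many admissible multi-indices produces the first bound in \eqref{diff:lower-terms}.

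For $n=8$ the new feature is that $m_l$ may reach $7$. A counting argument based on $i_l + j_l = 7$ with $j_l\le 3$ (forcing $i_l\ge 4$) together with $\sum_l i_l = 8$ shows that at most two factors per monomial can attain $m_l = 7$, and this extremal situation only occurs for $k=2$ with $(i_1,i_2)=(4,4)$ and $j_1=j_2=3$, producing the single problematic term $(\dx^7\ue)^2$. To it we apply Gagliardo--Nirenberg:
\begin{equation*}
  \norm{\dx^7\ue}_{L^\infty}\le C\norm{\dx^7\ue}_{L^2}^{1/2}\norm{\dx^8\ue}_{L^2}^{1/2}\le C\norm{\ue}_{H^7}^{1/2}\norm{\ue}_{H^8}^{1/2},
\end{equation*}
which yields
\begin{equation*}
  \norm{(\dx^7\ue)^2}_{L^2}\le \norm{\dx^7\ue}_{L^\infty}\norm{\dx^7\ue}_{L^2}\le C\norm{\ue}_{H^7}^{3/2}\norm{\ue}_{H^8}^{1/2}\le C\norm{\ue}_{H^7}\norm{\ue}_{H^8},
\end{equation*}
where the final step uses $\norm{\ue}_{H^7}\le \norm{\ue}_{H^8}$. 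For all other monomials at most one factor has $m_l = 7$; placing that factor in $L^2$ (controlled by $\norm{\ue}_{H^7}\le \norm{\ue}_{H^8}$) and the remaining factors in $L^\infty$ exactly as in the $n = 7$ analysis gives the second line of \eqref{diff:lower-terms}.

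The only corner case outside the product estimate is the monomial with $k=n$ and all $j_l=-1$, where the factors all equal $1$ and the term reduces to the pure partial $\dx^n f(\vz,x,t)$. Condition (A3) gives $\dx^n f(\vec 0,x,t)=0$, so a Taylor expansion in $\vz$ produces $\dx^n f(\vz,x,t)=\sum_{l=0}^3 z_l\int_0^1 (\dx^n f)_{z_l}(\tau\vz,x,t)\,d\tau$, which is controlled in $L^2_x$ by $C(\norm{\ue}_{H^7})\norm{\ue}_{H^7}$ and hence absorbed into the right-hand side of both bounds. The main technical obstacle is the combinatorial counting argument precluding three or more factors of maximal derivative order $n-1$: without this input, the Gagliardo--Nirenberg step would lose a full power of $\norm{\ue}_{H^8}$ and the desired linear dependence on the highest norm would fail.
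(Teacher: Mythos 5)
Your proof is correct and takes a genuinely different route from the paper. The paper obtains the $n=7$ bound by applying the Fundamental Theorem of Calculus to $\tld f_7$ as a whole, exploiting $\tld f_7(\vo,x,t)=0$ (a consequence of (A3)), and then deduces the $n=8$ bound from the inductive relation \eqref{f-n} that expresses $\tld f_8$ in terms of $\tld f_7$ and $a_{0,n}$. You instead go directly to the explicit monomial expansion of Corollary \ref{lem:remainder11-prep}, estimate each monomial by placing the factor carrying the most derivatives in $L^2$ and the rest in $L^\infty$, handle the single ``pure'' monomial $\dx^n f(\vz,x,t)$ by a Taylor-type argument in the $z$-variables using (A3), and use a counting argument on the admissible index set $\mathcal{S}_k^n$ to isolate the unique monomial $(\dx^7\ue)^2$ that forces the appearance of $\norm{\ue}_{H^8}$. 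One small remark: the paper simply uses the Sobolev embedding $\norm{\dx^7\ue}_{L^\infty}\le C\norm{\ue}_{H^8}$ on that term, whereas you invoke the interpolation $\norm{\dx^7\ue}_{L^\infty}\les\norm{\dx^7\ue}_{L^2}^{1/2}\norm{\dx^8\ue}_{L^2}^{1/2}$; both deliver the same final bound $C(\norm{\ue}_{H^7})\norm{\ue}_{H^8}$, so the Gagliardo--Nirenberg refinement is not strictly necessary. Your route is longer but makes completely transparent where the single extra power of $\norm{\ue}_{H^8}$ enters, which is the genuinely informative content of the lemma, whereas the paper's inductive approach is more compact and handles the pure-$x$-derivative corner case automatically rather than as a separate step.
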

\begin{proof}
Since $\tld f_7(\vo,x,t)=0$, we can use the Fundamental Theorem of Calculus as follows:
$$\tld f_7 = \sum_{j=0}^6\dx^j\ue\int_0^1 \partial_{z_j}\tld f_7(s\dx^6\vue) ds.$$
  Since $\tld f_7 \in C^1_t C^4_z W^{4,\infty}_{x},$ we may apply \eqref{coeff-bdd} to the integrands, concluding
  \begin{align*}
    \left|\int_0^1 \partial_{z_j} f(s\dx^6\vue) ds\right| \le C(\norm{\ue}_{W^{6,\infty}}).
  \end{align*}
  Therefore
  \begin{align*}
    \norm{\tld f_7}_{L^\infty_tL^2_x} \le C(\norm{\ue}_{W^{6,\infty}})\norm{\ue}_{H^6},
  \end{align*}
and  Sobolev embedding, i.e. the inclusion $H^1(\R)\subseteq L^\infty(\R),$ allows us to conclude
  \eqref{diff:lower-terms} for $n=7$.\\

  Now for $n= 8$, we  put fewer derivatives in $L^\infty$ by using finer structure of $\tld f_8$. Namely, from \eqref{f-n} we obtain
  \begin{align*}
    \tld f_8 = \sum_{j=-1}^5 [\partial_{z_j}\tld f_7  + \partial_{z_j} a_{0,6}\dx^7\ue] \dx^{j+1}\ue + \partial_{z_6} \tld f_7 \dx^7\ue+\partial_{z_6}a_{0,6} \dx^7\ue^{2}.
  \end{align*}
  Estimating the $L^2$-norm of the expression above, we obtain
  \begin{align*}
    \norm{\tld f_8}_{L^2_x} \le \sum_{j=-1}^5 \norm{\partial_{z_j}\tld f_7  + \partial_{z_j} a_{0,6}\dx^7\ue}_{L^\infty_{x}} \norm{\dx^{j+1}\ue}_{L^2_x} + \norm{\partial_{z_6} \tld f_7 +\partial_{z_6}a_{0,6} \dx^7\ue}_{L^\infty} \norm{ \dx^7\ue}_{L^2}.
  \end{align*}
  Since both $\tld f_7$ and $a_{0,6}$ are in $C^1_t C^4_z W^{4,\infty}_x$, we may further estimate coefficients by
   \eqref{coeff-bdd}, yielding
  \begin{align*}
    \norm{\tld f_8}_{L^2_x} \le C(\norm{\ue}_{W^{6,\infty}})\cdot (1+\norm{\ue}_{W^{7,\infty}})\norm{\ue}_{H^7},
  \end{align*}
  where we have used that no more than one factor of $\dx^7\ue$ is present in the $L^\infty$ terms. Using Sobolev embedding we conclude \eqref{diff:lower-terms} for $n=8$.
\end{proof}
We now estimate the coefficients for \eqref{nonlinear-linear}.
\begin{lem}\label{lem:coeff} Let $\ue \in C^0_t H^8$ for $\eps>0$ or $\ue\in C^t_0 H^7$ for $\eps=0$ satisfy \eqref{parab:non} and consider the linear equation \eqref{nonlinear-linear}.
Then the coefficient norms from \eqref{linear-coeff} for the equation \eqref{nonlinear-linear} can be estimated as follows (where we follow the convention of the Remark \ref{coeff-n}):
  \begin{align}
    k_G^n(t) \le C(k(t),n),\label{coeff-bdd-est}\\
  \tld M^n(t) \le C(M_\eps(t),k(t),n),  \label{lin:coeff}
  \end{align}
 where $k(t)$ and $M_\eps$ are as in \eqref{dispersion+low-norm} and \eqref{high-norm} respectively.

\end{lem}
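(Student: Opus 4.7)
The plan is to estimate each piece of $k_G^n(t)$ and $\tld M^n(t)$ from \eqref{linear-coeff} by combining the structural hypothesis (A2) with the regularity in (A1), Sobolev embedding in one dimension (so $H^{7}\subset W^{6,\infty}$), and the equation \eqref{parab:non} itself to handle time derivatives of $\ue$. For convenience, I write $a_3=f_{z_3}(\dx^3\vue)$, $a_2=f_{z_2}+n\dx[f_{z_3}(\dx^3\vue)]$, and $a_j=a_{j,n}(\dx^{6-j}\vue)$ for $j=0,1$, as in \eqref{nonlinear-linear}.

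For \eqref{coeff-bdd-est}: The bounds $\|a_3(t)\|_{L^\infty_x}\le C(k(t))$ and $\|1/a_3(t)\|_{L^\infty_x}\le k(t)$ are immediate from (A1), the definition \eqref{Dispersion}, and $\|\dx^3\ue\|_{L^\infty}\lesssim\|\ue\|_{H^4}\le k(t)$. The essential term is $\|\int_0^x a_2/a_3\,dy\|_{L^\infty_x}$. Using (A2) together with the non-vanishing of $f_{z_3}$,
\begin{align*}
\frac{a_2}{a_3}=\frac{f_{z_2}}{f_{z_3}}+n\frac{\dx[f_{z_3}(\dx^3\vue)]}{f_{z_3}(\dx^3\vue)}=\dx\!\left[g_D(\dx^2\vue)+n\log|f_{z_3}(\dx^3\vue)|\right]+g_H(\dx^3\vue),
\end{align*}
so integration from $0$ to $x$ telescopes the first two pieces into boundary values, which are bounded by $C(k(t))$ via (A1), $|1/f_{z_3}|\le k(t)$, and Sobolev embedding. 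The remaining piece is controlled uniformly in $x$ by $\|g_H(\dx^3\vue)\|_{L^1_x}$; here Taylor's theorem together with \eqref{g_H-is-cubic} gives the pointwise bound $|g_H(\dx^3\vue,x,t)|\le C(\|\ue\|_{W^{3,\infty}})\sum_{j=0}^{3}|\dx^j\ue|^2$, hence $\|g_H(\dx^3\vue)\|_{L^1_x}\le C(k(t))\|\ue\|_{H^3}^2\le C(k(t))$.

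For \eqref{lin:coeff}: The $W^{i,\infty}_x$ norms of $a_i$ come from the chain rule applied to the explicit forms above: $\dx^i a_i$ involves at worst $\dx^6\ue$, which lies in $L^\infty_x$ by the embedding $H^7\subset W^{6,\infty}$, giving each such norm bounded by $C(M_\eps(t))$. The integral portion of $\tld M^n$ is $\le C(k(t))$ by the previous paragraph. For the time derivatives, \eqref{parab:non} yields $\dt\ue=-f(\dx^3\vue)-\eps\dx^4\ue$, from which $\|\dt\ue\|_{W^{3,\infty}_x}\le C(M_\eps(t))$ (the $\eps\dx^7\ue$ contribution is absorbed by $\eps\|\ue\|_{H^8}\le M_\eps(t)$). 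The quantities $\dt a_3$, $\dt g_D(\dx^2\vue)$, and $\dt\log|f_{z_3}(\dx^3\vue)|$ are then directly bounded by $C(M_\eps(t),k(t))$ through the chain rule. The one delicate piece is $\int_0^x \dt g_H(\dx^3\vue)\,dy$; here the extra information $\partial_{z_j}g_H(\vo,x,t)=0$ from \eqref{g_H-is-cubic} gives $|\partial_{z_j}g_H(\vz,x,t)|\le C|\vz|$, so the chain rule produces $|\dt g_H(\dx^3\vue)|\le C(k(t))\sum_{j=0}^{3}|\dx^3\ue|\,|\dx^j\dt\ue|$, whose $L^1_x$ norm is controlled by Cauchy--Schwarz and $\|\ue\|_{H^3}\|\dt\ue\|_{H^3}\le C(M_\eps(t),k(t))$.

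The main obstacle is precisely this uniform-in-$x$ control of $\int_0^x g_H(\dx^3\vue)\,dy$ and $\int_0^x \dt g_H(\dx^3\vue)\,dy$: these are the only ingredients that cannot be handled by pointwise estimates alone, and both rely decisively on the second-order vanishing of $g_H$ at $\vz=\vo$ supplied by \eqref{g_H-is-cubic}, which converts pointwise bilinearity in $\dx^j\ue$ into genuine $L^1_x$-integrability on the line.
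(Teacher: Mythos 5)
Your proposal is correct and follows essentially the same route as the paper's own proof: you decompose $\frac{a_2}{a_3}$ as a total derivative (of $g_D + n\log|f_{z_3}|$) plus $g_H$ using (A2), bound the logarithmic piece through the $\lambda$-component of $k(t)$, control $\int_0^x g_H$ in $L^\infty$ by Taylor expansion to second order via \eqref{g_H-is-cubic} plus Cauchy--Schwarz, obtain the $W^{i,\infty}$ bounds on $a_i$ from the chain rule together with the fact that each $\dx^i a_i$ involves at worst $\dx^6 \ue$ (controlled by the embedding $H^7 \subset W^{6,\infty}$), and handle the time-derivative terms by using the equation itself to bound $\dt\ue$ in $H^4$ (this is precisely Proposition \ref{lem:dt}). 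The only differences from the paper are cosmetic: you write $\log|f_{z_3}|$ and keep the boundary value at $x=0$ explicitly, while the paper drops the absolute value (implicitly using the fixed sign of $f_{z_3}$) and uses the elementary inequality $\log y \le y + 1/y$ to bound the log by quantities already controlled; both are equally valid.
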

\begin{rem}
  Note that Lemma \ref{lem:coeff} and Theorem \ref{thm:lin} determine the regularity we pursue in Theorem \ref{main-thm};
  that is, these are the steps of our proof which cause us to work in the space $H^{7}.$
  Knowing more precise structure of  the function $f$ in \eqref{generalEquation}, e.g. if $f$ is ``less nonlinear,''
  would lower the regularity needed in our proof.  In particular, our argument could utilize the spaces $H^4$ for $K(2,2)$-type equations and $H^{\frac{3}{2}^+}$ for KdV.
\end{rem}
\begin{proof}
We first estimate the lower-order norms for $k_G$. The estimates of the dispersive coefficient follow from
the coefficient hypothesis (A1) and the lower bound on the nonlinear dispersion \eqref{Dispersion}. %

  More precisely, from the definition of the coefficient $a_3 = f_{z_3}(\dx^3 \vue,\ldots),$ we see
  $$
\norm{a_3}_{L^\infty} +   \left\|\frac{1}{a_3}\right\|_{L^\infty} \le \norm{f_{z_3}}_{L^\infty} +
\left\|\frac{1}{f_{z_3}}\right\|_{L^\infty}.
  $$
Using \eqref{coeff-bdd} for $f_{z_3}(\dx^3 \vue),$  the definition of $\lambda(t)$ in \eqref{Dispersion},
and Sobolev embedding implies
 \begin{equation}\nonumber
   \norm{a_3}_{L^\infty}+ \left\|\frac{1}{a_3}\right\|_{L^\infty}  \le C(\norm{\vue}_{W^{3,\infty}})+\frac{1}{\lambda(t)}
   \le  C\left(\norm{\vue}_{H^4},\frac{1}{\lambda(t)}\right).
 \end{equation}
 It remains to estimate $\norm{\int_0^x\frac{a_2}{a_3} dx'}_{L^\infty_x}$ to finish \eqref{coeff-bdd-est} for $\tld k_G$.
  From the definition of the coefficients in \eqref{differentiate-nonlinear-0}, we have
  \begin{align*}
      \int_0^x\frac{a_2}{a_3} dx' = n \log f_{z_3}(\dx^3 \vue,\ldots) + \int_0^x g_M dx'.
  \end{align*}
  Observe that a logarithm is dominated by its argument:
  \begin{align*}
    \log y \le y+\frac{1}{y},\quad \text{ for } y>0.
  \end{align*}
  Hence the logarithm is comparable to the norm previously estimated above:
  \begin{align*}
    |\log f_{z_3}(\dx^3 \vue)| \le  \norm{a_3}_{L^\infty}+ \left\|\frac{1}{a_3}\right\|_{L^\infty}.
  \end{align*}
Meanwhile, from \eqref{Diffusion} we have
  $$
  \int_0^x g_M dx' = g_D(\dx^2\vue) + \int_0^x g_H(\dx^3\vue)dx'.
  $$
The term  $g_D$ is controlled by \eqref{coeff-bdd}: $$\norm{g_D}_{L^\infty} \le C(\norm{\vue}_{W^{2,\infty}}).$$
Continuing, the Taylor expansion of $g_H$ to the quadratic terms using \eqref{g_H-is-cubic} implies
  \begin{align*}
    g_H(\dx^3\vue) = \sum_{i,j=0}^3 \dx^i \ue \dx^j \ue \int_0^1\int_0^1\partial_{z_i,z_j}g_{H}(s_1s_2\dx^3\vue)s_1ds_2ds_1.
  \end{align*}
  We then use Cauchy-Schwarz, \eqref{coeff-bdd}, and Sobolev embedding:
  \begin{align*}
   \left\|\int_0^x g_H(\dx^3\vue)\right\|_{L^\infty}& \le \sum_{i,j=0}^3 \norm{\dx^i \ue}_{L^2}\norm{ \dx^j \ue }_{L^2}\sup_{|s|\le 1}\norm{\partial_{z_i,z_j} g_{H}(s\dx^3\vue)}_{L^\infty_x}\\
   & \le C(\norm{\ue}_{H^4}).
  \end{align*}
   Estimates of $\tld M(t)$ are quite similar to estimates of $\tld k_G(t)$. The relevant coefficients in
   \eqref{differentiate-nonlinear-0} can be written in the form $a_j=a_{j,n}(\dx^{6-j}\ue)$ for $a_j$ satisfying $a_j\in C^1_t C^{7+j}_z W^{7+j,\infty}_x$ by \eqref{higher-coeff} and (A1). Thus
   \begin{align*}
     \norm{a_j}_{W^{3-j,\infty}}\le C(\norm{\dx^3\vue}_{W^{3-j,\infty}_x})\le C(\norm{\ue}_{H^7}) = C(M_\eps(t)).
   \end{align*}
 We then observe that $\dt[ a_3(x,t)] = f_{z_3,t}+\sum_{j=0}^3f_{z_3,z_j}(\dx^3 \vue)\dt \dx^j \ue$. Estimating as above,
 and using Proposition \ref{lem:dt},
 \begin{align*}
   \norm{\dt a_3}_{L^\infty} \le C(\norm{\dt \ue}_{H^4},\norm{\ue}_{H^4}) \le C(M_\eps(t)).
 \end{align*}

Meanwhile, differentiation yields the following:
 \begin{align*}
     \dt \int_0^x\frac{a_2}{a_3} dx' = n \dt[\log f_{z_3}(\dx^3 \vue)] + \int_0^x\dt [g_M ]dx'.
  \end{align*}
  For the first term, $\dt[\log f_{z_3}(\dx^3 \vue)] \le C(\norm{\dt a_3}_{L^\infty},\norm{a_3}_{L^\infty},\norm{\frac{1}{a_3}}_{L^\infty} )$. Then $\dt[g_M]$ is estimated similarly to $\int_0^x\frac{a_2}{a_3} dx'$, with the additional $\dt\dx^j \ue$
  terms estimated with Proposition \ref{lem:dt}.  These considerations yield the following:
\begin{align*}
 \left\| \dt \int_0^x\frac{a_2}{a_3} dx'\right\|_{L^\infty_x} \le
 C\left(\norm{\ue}_{H^7},\eps\norm{\ue}_{H^8},\frac{1}{\lambda(t)}\right).
\end{align*}
This completes the proof.
\end{proof}

\subsection{Proof of Proposition \ref{uniform:est}}\label{subsec:uniform-proof}
\begin{proof}
  By applying Lemma \ref{lem:coeff}, we conclude from \eqref{energy:boot} for $n\ge 7$ that
  \begin{align*}
     \norm{\dx^{n}\ue(t)}_{L^2}
        \le C(k(t),n)\exp\left(\int_0^t C( M_\eps(t'),k(t')) dt'\right)
        \left( \norm{\dx^{n}(u_{0})_{\delta}} + t\norm{\tld f_n(\dx^{n-1} \vue)}_{L^\infty_t L^2}\right).
  \end{align*}
  Adding this estimate to \eqref{dnorm-est1} we obtain
  \begin{align*}
    \norm{\ue(t)}_{H^n} \le C(k(t))e^{tC\left(\sup_{t'\le t} \left[M_\eps(t'),k(t')\right]\right)}\left(\norm{u_0}_{H^n}+tC(M_\eps(t)) + t\norm{\tld f_n(\dx^{n-1} \vue)}_{L^\infty_t L^2}\right).
  \end{align*}
  Using Lemma \ref{lem:remainder} for $\tld f_7$ implies
  \begin{align*}
    \norm{\ue(t)}_{H^7} \le  C(k(t))e^{tC\left(\sup_{t'\le t} \left[M_\eps(t'),k(t')\right]\right)}\left(\norm{u_0}_{H^n}+tC(M_\eps(t)) + tC(\norm{\ue(t')}_{L^\infty_{t'}H^7})\right).
  \end{align*}
  Meanwhile the use of \eqref{diff:lower-terms} for $\tld f_8$ gives
  \begin{multline}\label{remainder-H8}
     \eps\norm{\ue(t)}_{H^8} \le  C(k(t))e^{tC\left(\sup_{t'\le t} \left[M_\eps(t'),k(t')\right]\right)}\\
      \cdot\left(\eps \norm{u_0}_{H^8}+\eps tC(M_\eps(t))
      + tC(\norm{\ue(t')}_{L^\infty_{t'}H^7}\cdot \eps\norm{\ue(t')}_{L^\infty_{t'}H^8}\right).
\end{multline}
  Adding the last two estimates concludes the proof.
\end{proof}

\subsection{Refined boundedness}
The following lemma is not needed for the proof of the Proposition \ref{uniform:est}, for which the estimates
of $\tld f_7$ and $\tld f_8$ are enough. However, in order to justify the $C_t H^7_x$ regularity of the solution we need a more precise estimate of $\tld f_{11}$. We can see this effect quite well in \eqref{remainder-H8}, where an estimate $\norm{\tld f_8}_{L^2}\le C(\norm{\ue}_{H^8})$ would not be sufficient. Thus, the lemma below can be thought of as a refinement of Lemma \ref{lem:remainder}.
\begin{lem}\label{lem:remainder11}
  Let $\ue$ be a solution of \eqref{parab:non} and let $\tld f_{11}$ be as in \eqref{differentiate-nonlinear-0} for $n=11$. Then
  \begin{align}  \label{remainder-H11}
    \norm{\tld f_{11}}_{L^2}\le C(\norm{\ue}_{H^7}) \norm{\ue}_{H^{11}}.
  \end{align}
\end{lem}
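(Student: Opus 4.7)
The plan is to use the explicit description of $\tld f_{11}$ from Corollary \ref{lem:remainder11-prep} as a finite sum of products of the form
\[
C_{\vec i,\vec j}\, f_{j_1,\ldots,j_k}(\vz)\prod_{l=1}^{k}\dx^{i_l}z_{j_l},
\]
with $k\ge 2$, $\sum_l i_l = 11$, and $i_l+j_l\le 10$. Each factor is either a constant (when $j_l=-1$ and $i_l=1$) or $\dx^{m_l}\ue$ with $m_l := i_l+j_l \le 10$. The coefficient $f_{j_1,\ldots,j_k}(\vz)$ is smooth in $\vz = (\dx^3\vue,x,t)$ and, by condition (A1) together with Sobolev embedding, is uniformly bounded by $C(\norm{\ue}_{H^4}) \le C(\norm{\ue}_{H^7})$. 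It therefore suffices to bound $\norm{\prod_l \dx^{m_l}\ue}_{L^2}$ for each such product.

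For a single product I would place the factor with the largest order $m^* := \max_l m_l \le 10$ in $L^2$ and the remaining factors in $L^\infty$, using $\norm{\dx^{m^*}\ue}_{L^2}\le \norm{\ue}_{H^{m^*}}$ and $\norm{\dx^{m_l}\ue}_{L^\infty}\le C\norm{\ue}_{H^{m_l+1}}$ (the latter from the embedding $H^1(\R)\hookrightarrow L^\infty(\R)$). Each resulting Sobolev norm I would interpolate between $H^7$ and $H^{11}$ via
\[
\norm{\ue}_{H^s}\le \norm{\ue}_{H^7}^{(11-s)/4}\norm{\ue}_{H^{11}}^{(s-7)/4}, \qquad s\in[7,11],
\]
and simply bound $\norm{\ue}_{H^s} \le \norm{\ue}_{H^7}$ when $s\le 7$. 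The result is an estimate of the form
\[
\norm{\prod_l \dx^{m_l}\ue}_{L^2}\le C\,\norm{\ue}_{H^7}^{\alpha}\,\norm{\ue}_{H^{11}}^{\beta}, \quad \alpha+\beta = k, \quad \beta = \tfrac{1}{4}\big((m^*-7)^+ + \textstyle\sum_{l\ne l^*}(m_l-6)^+\big).
\]

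The combinatorial heart of the argument is to check that $\beta\le 1$ in every admissible configuration. A factor can contribute to $\beta$ only when $m_l\ge 7$, which forces $i_l\ge 4$; since $\sum i_l = 11$, any product contains at most two such \emph{high} factors. The extremal configurations are $(i_1,i_2) = (7,4)$ and $(6,5)$ with $j_1=j_2=3$, producing respectively $(m_1,m_2)=(10,7)$ and $(9,8)$, both yielding exactly $\beta = 1$; every other configuration yields $\beta < 1$. Because $k\ge 2$ and $\beta\le 1$, one also has $\alpha = k-\beta\ge 1$.

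To finish, I would absorb $\norm{\ue}_{H^7}^\alpha\norm{\ue}_{H^{11}}^\beta$ into $C(\norm{\ue}_{H^7})\norm{\ue}_{H^{11}}$ by cases on the size of $\norm{\ue}_{H^{11}}$. If $\norm{\ue}_{H^{11}}\ge 1$, then $\norm{\ue}_{H^{11}}^\beta \le \norm{\ue}_{H^{11}}$ since $\beta\le 1$, and $\norm{\ue}_{H^7}^\alpha$ is absorbed into $C(\norm{\ue}_{H^7})$. If $\norm{\ue}_{H^{11}}\le 1$, then the embedding $H^{11}\hookrightarrow H^7$ yields $\norm{\ue}_{H^7}^\alpha\le \norm{\ue}_{H^{11}}^\alpha$, and hence $\norm{\ue}_{H^7}^\alpha\norm{\ue}_{H^{11}}^\beta \le \norm{\ue}_{H^{11}}^{\alpha+\beta} = \norm{\ue}_{H^{11}}^k \le \norm{\ue}_{H^{11}}$. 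Summing over the finitely many terms in Corollary \ref{lem:remainder11-prep} then yields \eqref{remainder-H11}. The main obstacle is the two high-factor case $(m_1,m_2)\in\{(10,7),(9,8)\}$: a naive Hölder/Sobolev estimate leaving the $L^2$-factor un-interpolated gives a super-linear $\norm{\ue}_{H^{11}}^{5/4}$ dependence, and the simultaneous Gagliardo--Nirenberg interpolation of \emph{both} high-order factors is what restores the desired linear dependence on $\norm{\ue}_{H^{11}}$.
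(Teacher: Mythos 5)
Your proof is correct and follows essentially the same route as the paper: both start from the explicit description of $\tld f_{11}$ in Corollary \ref{lem:remainder11-prep}, estimate each product by placing the highest-order factor in $L^2$ and the rest in $L^\infty$ via Sobolev embedding, and then use the $H^7$--$H^{11}$ interpolation (Lemma \ref{interpolation}) to restore a single power of $\norm{\ue}_{H^{11}}$. The only differences are bookkeeping: you organize by total derivative count $m_l=i_l+j_l$ while the paper organizes by $i_1$ (splitting into $I_{\ge 8},I_7,I_6,I_{\le 5}$), and your closing case analysis on the size of $\norm{\ue}_{H^{11}}$ is avoided in the paper by interpolating only when the exponent reaches exactly one.
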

This lemma would allow us to show
 persistence of regularity, i.e. a solution with $H^{11}$ data has an $H^{11}$ solution on the same time interval. We defer the proof of Lemma \ref{lem:remainder11} until we prove the following corollary, which is the main motivation for the lemma.
  \begin{coro}\label{coro:remainder}
     For $\ue$ as before and $M$, $T$ from Corollary \ref{uniform:exist}, there exists a constant $C=C(M,k(0))$, such that
     \[ \norm{\ue}_{L^\infty_{[0,T]} H^{11}} \le C\delta^{-4}.\]
    \end{coro}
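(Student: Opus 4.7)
The plan is to apply the linear energy estimate from Theorem \ref{thm:lin} to the equation satisfied by $w = \dx^{11}\ue$, which by Lemma \ref{lem:deriv} (with $n=11$) fits the form \eqref{nonlinear-linear}. The estimate \eqref{energy:boot} gives
\begin{equation*}
  \norm{\dx^{11}\ue(t)}_{L^2} \le C(k_G^{11}(t)) \exp\!\left(\int_0^t C(\tld M^{11}(t'))\,dt'\right)\left(\norm{\dx^{11}(u_0)_\delta}_{L^2} + \norm{\tld f_{11}(\dx^{10}\vue)}_{L^1_t L^2}\right),
\end{equation*}
and the goal is to control each of the three factors uniformly in $\eps$ and $\delta$ (for $\eps\le\delta^2\le\delta_0^2$) on the time interval $[0,T]$ provided by Corollary \ref{uniform:exist}.

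First I would handle the data term via Lemma \ref{lem:regularize}: since $11 = 7+4$, the estimate \eqref{eq:BS:above} gives
\begin{equation*}
  \norm{(u_0)_\delta}_{H^{11}} \le C\delta^{-4}\norm{u_0}_{H^7},
\end{equation*}
which is the sole source of the $\delta^{-4}$ factor appearing in the corollary. Next, I would combine Corollary \ref{2M}, Remark \ref{rem:k}, and Lemma \ref{lem:coeff} (with $n=11$) to bound $k_G^{11}(t)\le C(k(t),11)\le C(4k(0),11)$ and $\tld M^{11}(t)\le C(M_\eps(t),k(t),11)\le C(2M,4k(0),11)$ for all $t\in[0,T]$, so both pre-factors in the linear estimate are controlled by a constant $C(M,k(0))$.

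The remaining piece, and the only nontrivial one, is the nonlinear remainder $\tld f_{11}$. Here I would invoke Lemma \ref{lem:remainder11}, which yields
\begin{equation*}
  \norm{\tld f_{11}(\dx^{10}\vue)(t')}_{L^2} \le C(\norm{\ue(t')}_{H^7})\,\norm{\ue(t')}_{H^{11}} \le C(M)\,\norm{\ue(t')}_{H^{11}},
\end{equation*}
since $\norm{\ue}_{H^7}\le M_\eps(t)\le 2M$ by Corollary \ref{2M}. Combining everything, and adding $\norm{\ue}_{L^2}\le M$ to the $\dx^{11}\ue$ estimate to recover the full $H^{11}$ norm, produces the integral inequality
\begin{equation*}
  \norm{\ue(t)}_{H^{11}} \le C(M,k(0))\,\delta^{-4}\norm{u_0}_{H^7} + C(M,k(0))\int_0^t \norm{\ue(t')}_{H^{11}}\,dt'
\end{equation*}
valid for $t\in[0,T]$. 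I would close the argument with Gr\"onwall's inequality, giving $\norm{\ue(t)}_{H^{11}} \le C(M,k(0))\,e^{C(M,k(0))T}\delta^{-4}$, and absorbing $e^{C(M,k(0))T}$ into the constant (since $T$ depends only on $M$ and $k(0)$) yields the claimed bound. The only genuine obstacle is verifying that $\ue$ is regular enough for the linear estimate at level $n=11$ to be justified, but this is supplied by Proposition \ref{parab:prop} with $s=12$ (allowable because $(u_0)_\delta\in\Schw$), which provides $\ue\in C_t H^{12}$ on the parabolic existence interval containing $[0,T]$.
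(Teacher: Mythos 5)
Your proposal is correct, and it uses the same key ingredients as the paper: Theorem \ref{thm:lin} applied to $\dx^{11}\ue$ with coefficients from \eqref{nonlinear-linear}, Lemma \ref{lem:remainder11} to control the forcing, the coefficient bounds from Lemma \ref{lem:coeff} combined with Corollary \ref{2M} and Remark \ref{rem:k}, and Lemma \ref{BS} for the $\delta^{-4}$ data bound. The only difference lies in how the self-referential $H^{11}$ bound is closed. You keep the forcing in the integral form $\norm{\tld f_{11}}_{L^1_{[0,t]}L^2_x} \le C(M)\int_0^t\norm{\ue(t')}_{H^{11}}\,dt'$ (using the pointwise-in-time reading of Lemma \ref{lem:remainder11}) and apply Gr\"onwall over the full interval $[0,T]$. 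The paper instead estimates $\tld f_{11}$ in $L^\infty_t L^2_x$ on a sub-interval $[t_0,t_1]$, yielding $(t_1-t_0)C(M)\norm{\ue}_{L^\infty_{[t_0,t_1]}H^{11}}$; it then chooses $\triangle t = t_1-t_0$ small enough that the coefficient $(t_1-t_0)C(M,k(0))$ equals $\tfrac{1}{2}$, absorbs the $H^{11}$ term, and iterates finitely many times to cover $[0,T]$. The two are morally equivalent, and yours is arguably cleaner since it avoids the explicit step count; the iteration in the paper can be seen as a discretized Gr\"onwall. One small point worth stating explicitly in a final writeup is that the pointwise-in-time bound $\norm{\tld f_{11}(t')}_{L^2}\le C(\norm{\ue(t')}_{H^7})\norm{\ue(t')}_{H^{11}}$ used inside the $L^1_t$ integral is exactly what Lemma \ref{lem:remainder11} delivers at each fixed $t'$, so the Gr\"onwall step is justified.
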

\begin{proof}
  As $\ue$ satisfies \eqref{parab:non}, $\dx^{11}\ue$ satisfies \eqref{differentiate-nonlinear-0} for $n=11$. We now apply the linear estimate, Theorem \ref{thm:lin}, with the coefficients for $\dx^{11}\ue$ as in \eqref{nonlinear-linear}.
  First, observe that coefficients in \eqref{differentiate-nonlinear-0} for $n=11$ satisfy the same bounds as those for $\dx^7\ue$ by Lemma \ref{lem:coeff}. Second, observe that we can use Theorem \ref{thm:lin} on an interval $[t_0,t_1]\subset [0,T]$ rather than $[0,t_1]$. With those two observations in mind we get, for $t_0\le t'\le t_1,$
  \begin{align*}
    \norm{\dx^{11}\ue(t')}_{L^2}\le C(M(t),k(t))(\norm{\ue(t_0)}_{H^{11}}+\norm{\tld f_{11}}_{L^1_{[t_0,t_1]} L^2_x}).
  \end{align*}
  We extract the time factor and estimate $\tld f_{11}$ in $L^\infty$ in time.
  Furthermore, Lemma \ref{lem:remainder11} allows us to estimate the remainder $\tld f_{11}$ with no more than a single factor of $\dx^{11}\ue$:
  \begin{align*}
    \norm{\dx^{11}\ue(t)}_{L^2}\le C(M(t),k(t))
     (\norm{\ue(t_0)}_{H^{11}} +(t_1-t_0)C(\norm{\ue}_{L^\infty_{[t_0,t_1]}H^7})\norm{\ue(t_0)}_{L^\infty_{[t_0,t_1]}H^{11}}).
  \end{align*}
  As $[t_0,t_1]\subset [0,T]$, $\norm{\ue(t')}_{H^7}\le M$ for $t'\in [t_0,t_1]$. Incorporating this estimate, after adding the $L^2$ norm, we get
  \begin{multline}\nonumber
    \norm{\ue(t')}_{H^{11}}\le C(\norm{\ue}_{L^2}+\norm{\dx^{11}\ue}_{L^2})\\
    \le C(M)+ C(M,k(t'))\norm{\ue(t_0)}_{H^{11}} + (t_1-t_0)C(M,k(t'))\norm{\ue(t_0)}_{L^\infty_{[t_0,t_1]}H^{11}}.
  \end{multline}
  By Remark \ref{rem:k}, $k(t')\le 4k(0)$. We can thus eliminate the dependence of the bound on $t'$ at a cost of a larger constant:
  \begin{equation}\nonumber
     \norm{\ue(t')}_{H^{11}}\le C(M)+ C(M,k(0))\norm{\ue(t_0)}_{H^{11}}
      + (t_1-t_0)C(M,k(0))\norm{\ue(t_0)}_{L^\infty_{[t_0,t_1]}H^{11}}.
  \end{equation}
   Furthermore, we let $t_1=t_0+\triangle t$ and we make the width of the interval $\triangle t$ small enough so that
  \begin{align*}
    (t_1-t_0) C(M,k(0))= \frac{1}{2}.
  \end{align*}
  This choice allows us to eliminate the $H^{11}$ term on the right hand side:
  \begin{align*}
    \norm{\ue(t_0)}_{L^\infty_{[t_0,t_0+\triangle t]}H^{11}}\le C(M)+ C(M,k(0))\norm{\ue(t_0)}_{H^{11}}.
  \end{align*}
  We can now iterate this estimate for $t_0=0$, $\triangle t$, $2\triangle t$,\ldots, $j\triangle t$, where
  \begin{align*}
    j\triangle t \ge T \text{ for $T$ from Corollary \ref{apriori}}
  \end{align*}
  so that we get
  \begin{align*}
    \norm{\ue(t_0)}_{L^\infty_{[0,t_0+j\triangle t]}H^{11}} \le C(M,k(0))^j(1+\norm{\ue(0)}_{H^{11}}).
  \end{align*}
  Using Lemma \ref{BS} implies that $\norm{\ue(0)}_{H^{11}}\le C(M)\delta^{-4}$ and concludes the proof.
\end{proof}
We now return to the proof of Lemma \ref{lem:remainder11}. In the proof we need the following variation of a basic interpolation result.
\begin{lem}\label{interpolation}
  Let $w\in H^{11}$. Then for $0\le\theta\le 4,$
  \begin{align*}
    \norm{w}_{H^{7+\theta}}\le \norm{w}_{H^{11}}^{\frac{\theta}{4}} \norm{w}_{H^7}^{\frac{3\theta}{4}}.
  \end{align*}
\end{lem}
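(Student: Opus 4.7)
The plan is to prove this interpolation estimate by moving to the Fourier side, where all three Sobolev norms become weighted $L^2$ integrals with weight $(1+\abs{\xi}^2)^s$, and then applying H\"older's inequality to split the middle weight as a product of the two endpoint weights. By Plancherel, one has
\begin{equation*}
\norm{w}_{H^{7+\theta}}^2 = \int_\R (1+\abs{\xi}^2)^{7+\theta}\abs{\hat w(\xi)}^2\, d\xi,
\end{equation*}
and similarly for the $H^{11}$ and $H^7$ norms, so the whole proof reduces to a single pointwise-in-$\xi$ algebraic factorization followed by H\"older.

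The key step is to write the integrand as a product of two factors that reassemble to the endpoint integrands after integration. First I would dispose of the endpoint cases $\theta=0$ and $\theta=4$ as tautologies, and then for $0<\theta<4$ I would factor
\begin{equation*}
(1+\abs{\xi}^2)^{7+\theta}\abs{\hat w(\xi)}^2
= \Bigl[(1+\abs{\xi}^2)^{11}\abs{\hat w(\xi)}^2\Bigr]^{p}\cdot \Bigl[(1+\abs{\xi}^2)^{7}\abs{\hat w(\xi)}^2\Bigr]^{q},
\end{equation*}
with $p,q$ chosen to match weights and the exponent on $\abs{\hat w}^2$. Matching the power of $(1+\abs{\xi}^2)$ gives $11p+7q = 7+\theta$, and matching the power of $\abs{\hat w}^2$ gives $p+q=1$, forcing $p=\theta/4$ (and hence $q=(4-\theta)/4$). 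Then H\"older's inequality in $L^1_\xi$ with conjugate exponents $4/\theta$ and $4/(4-\theta)$ yields
\begin{equation*}
\int (1+\abs{\xi}^2)^{7+\theta}\abs{\hat w}^2\, d\xi \le \norm{w}_{H^{11}}^{2\cdot\theta/4}\,\norm{w}_{H^7}^{2\cdot(4-\theta)/4},
\end{equation*}
and taking square roots produces an interpolation bound of the form $\norm{w}_{H^{7+\theta}} \le \norm{w}_{H^{11}}^{\theta/4}\norm{w}_{H^7}^{(4-\theta)/4}$.

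There is essentially no serious obstacle in the argument itself; the only ``hard part'' is bookkeeping the exponents. The most delicate aspect is verifying that the exponents on the right-hand side truly are the ones claimed in the statement, since the natural Plancherel–H\"older computation sketched above produces the exponent $(4-\theta)/4 = 1-\theta/4$ on $\norm{w}_{H^7}$ rather than $3\theta/4$. Accordingly, in writing out the proof I would carefully track the arithmetic $11p+7q=7+\theta$ together with $p+q=1$ to confirm the final exponent on $\norm{w}_{H^7}$, and, if a different form of the estimate is actually needed in the downstream application (in the proof of Corollary~\ref{coro:remainder}, where this lemma is applied to $w=\ue$), reconcile by using the ambient $a~priori$ bound $\norm{\ue}_{H^7}\le M$ to absorb any leftover powers of $\norm{w}_{H^7}$ into a constant.
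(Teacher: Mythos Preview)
Your approach is exactly the paper's: Plancherel followed by H\"older on the function $\hat v(\xi)=\langle\xi\rangle^{7}\hat w(\xi)$. You are also right to flag the exponent: the correct H\"older computation forces the power on $\norm{w}_{H^7}$ to be $1-\theta/4=(4-\theta)/4$, not $3\theta/4$. The paper's stated exponent (and the displayed inequality in its proof, where $\theta/4$ and $3\theta/4$ do not sum to $1$) is a typo; the two applications in the proof of Lemma~\ref{lem:remainder11} in fact use the corrected exponents $(\theta,1-\theta/4)=(3,1/4)$ and $(1,3/4)$.
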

\begin{proof}
  Use the Plancherel Theorem and use H\"{o}lder's inequality for the function $\hat v(\xi)=\jap{\xi}^7\hat w(\xi):$
  \begin{align*}
    \int \jap{\xi}^{2\theta} |\hat v(\xi)|^2 d\xi\le \left(\int\jap{\xi}^{2\cdot 4} |\hat v(\xi)|^2d\xi\right)^\frac{\theta}{4}
    \left(\int |\hat v(\xi)|^2d\xi\right)^\frac{3\theta}{4}.
  \end{align*}
  This concludes the proof.
\end{proof}
\begin{proof}[Proof of Lemma \ref{lem:remainder11}]
  We use the precise variant from the Corollary \ref{lem:remainder11-prep}:
  \begin{align*}
    \tld f_{11}= \sum_{k=2}^{11}\sum_{i_l,j_l\in \mathcal{S}_k^{11}}
    C_{\vec{i},\vec{j}}\cdot f_{j_1,\ldots j_k}(z)\dx^{i_1}z_{j_1}\cdots \dx^{i_k} z_{j_k}\mid_{z=(\dx^3\vue,x,t)},
  \end{align*}
  where the $k-$tuples $\vec{i}$, $\vec{j}$ of admissible indices $\mathcal{S}_k$ are defined by the following:
  \begin{align*}
    \mathcal{S}_k^{11} =\left\{1\le l \le k; i_1\ge i_2\ge\ldots\ge i_k\ge 1;  \sum_{l=1}^k i_l=11; -1\le j_l\le 3; i_l+j_l<11;\max\{j_l,1-i_l\}\ge 0 \right\},
  \end{align*}
  i.e. we include all terms in $\dx^{11}[ f]$ that are of order less than $11$ in $u$. We then place the highest-order term in $L^2$, remaining terms in $L^{\infty},$ and analyze four different scenarios:
  \begin{align*}
    \norm{\tld f_{11}(\dx^{10}\vue)}_{L^2}\le &
    \sum_{k=2}^{11}
  \sum_{i_l,j_l\in \mathcal{S}_k}
    C_{\vec{i},\vec{j}} \norm{ f_{j_1,\ldots j_k}(z)}_{L^\infty}\norm{\dx^{i_1}z_{j_1}}_{L^2}
    \norm{\dx^{i_2}z_{j_2}}_{L^\infty}\cdots \norm{\dx^{i_k} z_{j_k}}_{L^\infty}\\
    & :=I_{\ge 8} + I_7 + I_6 +  I_{\le 5}.
  \end{align*}
  Here, the sum is separated by the largest number of derivatives $i_1$. That is, $I_{\ge 8}$ includes all the terms where $i_1\ge 8$; $I_7$, where $i_1=7$, etc... We estimate the new $I_l$ sums term by term.\\

  For any sum $I_{\le 5}$ through $I_{\ge 8}$, the fact that the number of $i_l$ derivatives adds up to 11 means that $k-1\le \sum_{l=2}^k i_l \le 11-i_1$. In particular, for $I_{\ge 8}$, $ k\le 4$ and $i_2\le 3$. Therefore all the $L^\infty$ terms have at most $3$ derivatives. We also estimate $f_{\vec{j}}$ via \eqref{coeff-bdd}:
  \begin{align*}
    I_{\ge 8} \le \sum_{i_1=8}^{10}\sum_{i_1+j_1<11} C(\norm{z}_{L^\infty_x}) (1+\norm{z}_{W^{3,\infty}_x}^4)\norm{\dx^{i_1} z_{j_1}}_{L^2}.
  \end{align*}
  Using Sobolev embedding and $z=(\dx^3\vue,x,t)$ we get
  \begin{align*}
    I_{\ge 9} \le C(\norm{\ue}_{H^7})\norm{\ue}_{H^{11}}.
  \end{align*}
  For $I_7$, $i_1=7$, which allows $i_2\le 4$ and $i_l\le 11-i_1-i_2\le 3$ for $l\ge3$. Hence by \eqref{coeff-bdd} as before,
  \begin{align*}
    I_7 \le C(\norm{z}_{L^\infty})\cdot \norm{\dx^7 z}_{L^2}\cdot (1+\norm{z}_{W^{4,\infty}})C(\norm{z}_{W^{3,\infty}}).
  \end{align*}
  Here, we have estimated terms beyond $i_2$ with $W^{3,\infty}_x$ norm. Hence by Sobolev embedding,
  \begin{align*}
    I_7 \le C(\norm{\ue}_{H^7})\norm{\ue}_{H^{10}}\cdot\norm{\ue}_{H^8}.
  \end{align*}
  We now use Lemma \ref{interpolation} to conclude
  \begin{equation}\nonumber
    I_7 \le C(\norm{\ue}_{H^7}) (\norm{\ue}_{H^{11}}^{\frac{3}{4}}\cdot \norm{\ue}_{H^{7}}^{\frac{1}{4}})\cdot (\norm{\ue}_{H^{11}}^{\frac{1}{4}}\norm{\ue}_{H^{7}}^{\frac{3}{4}})
    \le C(\norm{\ue}_{H^7})\norm{\ue}_{H^{11}}.
  \end{equation}
  The remaining terms are similar. For $I_6$, we have $i_2\le 5$. If $i_2=5$, then $k=2$
  as we do not have any derivatives left for $i_3$, etc... If $i_2\le 4$, then $i_3\le \min (5-i_2,i_2) \le 2$. Note that we used the non-increasing arrangement $i_1\ge i_2\ge i_3\cdots$. Hence
  \begin{align*}
    I_6 \le C(\norm{z}_{L^\infty})\cdot \norm{\dx^6 v}_{L^2}\cdot\norm{z}_{W^{5,\infty}}\cdot C(\norm{z}_{W^{2,\infty}}).
  \end{align*}
  Hence by Sobolev embedding and Lemma \ref{interpolation} we can conclude with
  \begin{equation}\nonumber
    I_6\le C(\norm{\ue}_{H^7})\cdot \norm{\ue}_{H^9}^2
    \le C(\norm{\ue}_{H^7})\norm{\ue}_{H^{11}}.
  \end{equation}
  For $I_{\le 5}$, we have $1\le i_2\le i_1\le 5$, thus $$1\le i_3\le \min\{6-i_1-i_2,i_2\} \le 3.$$
  We estimate
  \begin{multline}\nonumber
    I_{\le 5}\le C(\norm{z}_{L^\infty})\norm{z}_{H^5}\norm{z}_{W^{5,\infty}}C(\norm{z}_{W^{3,\infty}})\\
    \le C(\norm{\ue}_{H^7}) \norm{\ue}_{H^8}\norm{\ue}_{H^9}
    \le  C(\norm{\ue}_{H^7})\norm{\ue}_{H^{11}}.
  \end{multline}
  Adding the estimates for $I_{\ge 7}$,\ldots $I_{\le 5}$ completes the proof.
  \end{proof}

\section{Passage to the limit}

   \begin{prop}\label{lem:diff}
  Suppose that $ u_\eps $ and $\uee$ are in $C_T H^8$ (or $H^7$ for $\eps,\eps'=0$) and each solve the evolution equation
  \eqref{parab:non}, with
  initial data $u^{0}$ and $u^{0'},$ respectively. Then for $M=\displaystyle\sup_{\tau\in[0,t]}\left(M_\eps(\tau) + k(\tau)\right)$, there is a  constant
  $C(M)$ such that
  \begin{align}\label{lem:diff:eq}
    \norm{u_{\varepsilon}(t) - u_{\varepsilon'}(t)}_{L^\infty_T H^3_x}
    \le C(M)T\norm{u_{\varepsilon}(t) - u_{\varepsilon'}(t)}_{L^\infty_T H^3_x} + (\eps+\eps')C(M)
    +C(M)\|u^{0}-u^{0'}\|_{H^{3}_{x}}.
    \end{align}
    Therefore, there exists $T_{1}>0$ such that
    \begin{align}\label{diff:lim}
      \norm{u_{\varepsilon} - u_{\varepsilon'}}_{L^\infty_{T_1 H^3_x}}\le 2(\eps+\eps')C(M)+C(M)\|u^{0}-u^{0'}\|_{H^{3}_{x}}.
    \end{align}
\end{prop}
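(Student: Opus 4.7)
The plan is to write the difference $v:=u_{\eps}-u_{\eps'}$ as the solution of a linear evolution equation of the form \eqref{lin} and then apply the linear estimate of Theorem~\ref{thm:lin} to $w:=\dx^{3}v$, together with easier direct estimates for $v,\dx v,\dx^{2}v$. Subtracting the two copies of \eqref{parab:non} and using the Fundamental Theorem of Calculus along the segment joining $\dx^{3}\vu_{\eps'}$ to $\dx^{3}\vu_{\eps}$, I rewrite the nonlinear difference as
\begin{equation*}
f(\dx^{3}\vu_{\eps},\ldots)-f(\dx^{3}\vu_{\eps'},\ldots)=\sum_{j=0}^{3}\tld a_{j}\,\dx^{j}v,\qquad
\tld a_{j}:=\int_{0}^{1}f_{z_{j}}\bigl(\tau\dx^{3}\vu_{\eps}+(1-\tau)\dx^{3}\vu_{\eps'},x,t\bigr)\,d\tau,
\end{equation*}
so that $v$ satisfies
\begin{equation*}
\dt v+\sum_{j=0}^{3}\tld a_{j}\dx^{j}v+\eps\,\dx^{4}v=(\eps'-\eps)\dx^{4}u_{\eps'},\qquad v(\cdot,0)=u^{0}-u^{0'},
\end{equation*}
which is of the form \eqref{lin} with forcing $h=(\eps'-\eps)\dx^{4}u_{\eps'}$ bounded in $L^{\infty}_{T}L^{2}_{x}$ by $(\eps+\eps')C(M)$.

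Next I would verify that the coefficient norms $k_{G}(t)$ and $\tld M(t)$ of \eqref{linear-coeff} associated to the $\tld a_{j}$ are controlled by $C(M)$. The $W^{j,\infty}_{x}$ and time-derivative bounds on $\tld a_{j}$ follow from (A1), Sobolev embedding of $H^{7}$, and Proposition~\ref{lem:dt}; the lower bound $\tld a_{3}\gtrsim 1/\lambda$ survives the convex averaging because $f_{z_{3}}$ is bounded below along both solutions. The delicate quantity is $\int_{0}^{x}\tld a_{2}/\tld a_{3}\,dx'$, since a ratio of $\tau$-averages is not the $\tau$-average of the ratio. To treat it I would write $\tld a_{2}=\int_{0}^{1}g_{M}(\vz_{\tau})f_{z_{3}}(\vz_{\tau})\,d\tau$ with $\vz_{\tau}=\tau\dx^{3}\vu_{\eps}+(1-\tau)\dx^{3}\vu_{\eps'}$, use the uniform upper and lower bounds on $f_{z_{3}}$ and on $\tld a_{3}$ to reduce matters to controlling $\int_{0}^{x}\!\int_{0}^{1}g_{M}(\vz_{\tau})\,d\tau\,dx'$, and then apply the decomposition \eqref{Diffusion} of $g_{M}$ pointwise in $\tau$: the total-derivative piece integrates to a boundary term $g_{D}$ bounded by (A1) and (A2), while the higher-order piece $g_{H}(\vz_{\tau})$, at least quadratic in its arguments by \eqref{g_H-is-cubic}, is handled exactly as in the proof of Lemma~\ref{lem:coeff}.

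To upgrade Theorem~\ref{thm:lin} to an $H^{3}$ estimate on $v$, differentiate the linear equation above three times and set $w=\dx^{3}v$. The resulting equation for $w$ is again of the form \eqref{lin}, with coefficients assembled from the $\tld a_{j}$ by the Leibniz rule and still satisfying $k_{G},\tld M\le C(M)$. The forcing for $w$ collects the direct contribution $(\eps'-\eps)\dx^{7}u_{\eps'}$, controlled in $L^{\infty}_{T}L^{2}_{x}$ by $(\eps+\eps')\|u_{\eps'}\|_{H^{7}}\le(\eps+\eps')C(M)$, together with commutator remainders of the form $\dx^{k}\tld a_{j}\cdot\dx^{3+j-k}v$ with $k\ge 1$ and $3+j-k\le 3$; by Sobolev embedding and the uniform $H^{7}$-bound on $u_{\eps},u_{\eps'}$ these are bounded in $L^{\infty}_{T}L^{2}_{x}$ by $C(M)\|v\|_{L^{\infty}_{T}H^{3}_{x}}$. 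Applying Theorem~\ref{thm:lin} to $w$, combining with easier $L^{2}$ estimates for $v,\dx v,\dx^{2}v$, and integrating in time yields \eqref{lem:diff:eq}; choosing $T_{1}$ with $C(M)T_{1}\le 1/2$ absorbs the $C(M)T\|v\|_{L^{\infty}_{T}H^{3}}$ term into the left-hand side and produces \eqref{diff:lim}. The main obstacle, as always in this theory, is preserving the structural hypothesis (A2) under the Taylor averaging that defines $\tld a_{2}$ and $\tld a_{3}$, which is precisely what forces the algebraic rearrangement of the second paragraph; the remaining steps are a now-mechanical combination of the linear estimate and Sobolev calculus.
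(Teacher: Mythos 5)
Your overall plan (rewrite the difference as a linear evolution, apply Theorem~\ref{thm:lin} to $\dx^{3}v$, absorb the $C(M)T$ term by taking $T_{1}$ small) matches the paper's strategy, and you have correctly identified the real difficulty: the gauge-integral condition on $\int_{0}^{x}\tld a_{2}/\tld a_{3}\,dx'$ must survive the passage to the difference equation. However, your way of producing the coefficients --- the Taylor/convex-average formula $\tld a_{j}=\int_{0}^{1}f_{z_{j}}(\vz_{\tau})\,d\tau$ --- destroys exactly the structure that (A2) provides, and the fix you propose does not recover it. Writing $\tld a_{2}/\tld a_{3}=\int_{0}^{1}g_{M}(\vz_{\tau})\,w_{\tau}\,d\tau$ with $w_{\tau}=f_{z_{3}}(\vz_{\tau})/\tld a_{3}$ only shows that you are looking at a weighted $\tau$-average of $g_{M}(\vz_{\tau})$ with an $x$-dependent weight. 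The uniform bounds on $w_{\tau}$ do \emph{not} let you discard it: the quantity (A2) controls is the antiderivative of $g_{M}$, not $g_{M}$ pointwise, and after integrating the total-derivative piece $\dx[g_{D}(\vz_{\tau})]$ by parts against $w_{\tau}$ you are left with $-\int_{0}^{x}g_{D}(\vz_{\tau})\,\dx w_{\tau}\,dx'$. Here $\dx w_{\tau}$ contains both bounded non-decaying pieces (e.g. $f_{xz_{3}}(\vz_{\tau})/\tld a_{3}$) and $L^{2}_{x}$ pieces involving fourth derivatives of the solutions, while $g_{D}(\vz_{\tau})$ is merely bounded; neither combination is $L^{1}_{x}$, so $\int_{0}^{x}$ of this product is not controlled in $L^{\infty}_{x}$. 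In short, a $\tau$-average of functions each satisfying the Mizohata-type condition does not itself satisfy it, so $k_{G}$ for your $\tld a_{j}$ cannot be bounded from (A2). (A secondary issue with averaging: the paper only assumes $1/f_{z_{3}}\in L^{\infty}$, so without a sign hypothesis $\tld a_{3}=\int_{0}^{1}f_{z_{3}}(\vz_{\tau})\,d\tau$ could in principle vanish; the paper's choice sidesteps this too.)

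The paper avoids this entirely by making a different decomposition. Rather than linearizing via the convex average, it adds and subtracts the cross terms $f_{z_{3}}(\dx^{3}\vu_{\eps'})\dx^{6}u_{\eps}$, etc., so that the leading coefficients are evaluated on a \emph{single} solution: $a_{3}=f_{z_{3}}(\dx^{3}\vu_{\eps'})$ and $a_{2}=f_{z_{2}}(\dx^{3}\vu_{\eps'})+3\dx[f_{z_{3}}(\dx^{3}\vu_{\eps'})]$ (see \eqref{difference-a3}--\eqref{difference-a2}). Then $a_{2}/a_{3}=3\dx\ln f_{z_{3}}(\dx^{3}\vu_{\eps'})+g_{M}(\dx^{3}\vu_{\eps'})$ inherits the (A2) structure verbatim, exactly as in Lemma~\ref{lem:coeff}, with no $\tau$-averaging at all. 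The price is that the differences $f_{z_{j}}(\dx^{3}\vu_{\eps})-f_{z_{j}}(\dx^{3}\vu_{\eps'})$, multiplied by high derivatives of $u_{\eps}$, land in the forcing $h$; but these are precisely the terms bounded in $L^{\infty}_{t}L^{2}_{x}$ by $C(M)\|u_{\eps}-u_{\eps'}\|_{L^{\infty}_{t}H^{3}_{x}}$ via a Lipschitz estimate on $f$, which is what produces the $C(M)T\|\cdot\|_{H^{3}}$ term in \eqref{lem:diff:eq}. You should replace your Taylor linearization with this one-sided evaluation plus remainder-in-$h$ bookkeeping; the rest of your outline (differentiating three times, the $(\eps'-\eps)\dx^{7}u_{\eps'}$ term, the Sobolev estimates, absorbing for small $T_{1}$) then goes through.
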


\begin{rem}
The data $u^{0}$ and $u^{0'}$ will be taken to depend on smoothing parameters $\delta$ and $\delta',$ but for
the present proposition, it is not necessary to be that explicit about the nature of the data.
\end{rem}

\begin{rem}
If we take $\varepsilon=\varepsilon'=0$ and $u^{0}=u^{0'}$ in \eqref{diff:lim}, then we see that solutions are unique.
This proves one of the claims of Theorem \ref{main-thm}.
\end{rem}
\begin{rem}
  By iterating \eqref{diff:lim} as in the proof of Corollary \ref{coro:remainder}, we can replace $T_1$ with $T$ from Corollary \ref{uniform:exist}. This reiterates that size of the solution and dispersion (i.e. $M_0(t)$ and $k(t)$ from \eqref{high-norm} and \eqref{dispersion+low-norm}) determine the time of wellposedness.
\end{rem}
\begin{proof}
We let $0\leq\varepsilon'\leq\varepsilon,$ and we consider the solutions
$u_{\varepsilon}$ and $u_{\varepsilon'}$ which we have shown
above to exist.
We treat the difference $u_{\varepsilon}-u_{\varepsilon'}$ in $H^{3}$ by treating the difference first in $L^{2},$
and then by treating three spatial derivatives in $L^{2}.$
We first note that the inequality
\begin{equation}\nonumber
\|u_{\varepsilon}-u_{\varepsilon'}\|_{L^{\infty}_{T}L^{2}_{x}}
\leq C(M)T\|u_{\varepsilon}-u_{\varepsilon'}\|_{L^{\infty}_{T}H^{3}_{x}}
+(\varepsilon+\varepsilon')C(M)+C(M)\|u^{0}-u^{0'}\|_{H^{3}_{x}}
\end{equation}
follows immediately from the Fundamental Theorem of Calculus (integrating $(u_{\varepsilon}-u_{\varepsilon'})_{t}$
with respect to time) and a Lipschitz estimate for the function $f.$  We therefore are free to move on to
considering three derivatives of the difference.

We define $w$ to be three $x$-derivatives of the difference of the solutions,
\begin{equation}\nonumber
w=\partial_{x}^{3}\left(u_{\varepsilon}-u_{\varepsilon'}\right).
\end{equation}
We can write the evolution equation for $w$ in the framework of \eqref{lin}; to confirm this,
we will identify explicitly, more or less, all of the coefficients and the forcing.
Some additional notation will help with this task, so we will have the following decompositions:
\begin{equation}\nonumber
h=h_{-1}+h_{0}+h_{1}+h_{2}+h_{3}+h_{4},
\end{equation}
\begin{equation}\nonumber
a_{1}=b_{1}+b_{2},
\end{equation}
\begin{equation}
a_{0}=d_{0}+d_{1}+d_{2}.
\end{equation}
The coefficients $a_{2}$ and $a_{3}$ will be more straightforward to calculate, and no such
decomposition will be necessary for them.
Furthermore, we introduce the following notation:
\begin{equation}\nonumber
\partial_{t}w=\partial_{t}\partial_{x}^{3}u_{\varepsilon}-\partial_{t}\partial_{x}^{3}u_{\varepsilon'}
=A_{2}+A_{3}+A_{4}+A_{5}+A_{6}+A_{7}.
\end{equation}
Here, $A_{7}$ will consist of terms which involve seventh
derivatives of $u;$ this term simply comes from the parabolic regularization.
Continuing, $A_{6}$ will consist of terms from the right-hand side of \eqref{bigThreeDerivatives} which
involve sixth derivatives of $u,$ $A_{5}$ will consist of terms which involve fifth derivatives of $u,$
$A_{4}$ will consist of terms which involve fourth derivatives of $u$ but no higher derivatives,
$A_{3}$ will consist of terms which involve third derivatives of $u$ but no higher derivatives,
and $A_{2}$ will be the remaining terms, which involve at most second derivatives of $u.$

To begin, we may write $A_{7}$ simply as
\begin{equation}\nonumber
A_{7}=\varepsilon'\partial_{x}^{7}u_{\varepsilon'}-\varepsilon\partial_{x}^{7}u_{\varepsilon}.
\end{equation}
We add and subtract, to form the fourth-derivative term on the right-hand side of \eqref{lin}:
\begin{equation}\nonumber
A_{7}=-\varepsilon(\partial_{x}^{7}u_{\varepsilon}-\partial_{x}^{7}u_{\varepsilon'})
-\varepsilon\partial_{x}^{7}u_{\varepsilon'}+\varepsilon'\partial_{x}^{7}u_{\varepsilon'}
=-\varepsilon\partial_{x}^{4}w+(\varepsilon'-\varepsilon)\partial_{x}^{7}u_{\varepsilon'}.
\end{equation}
The second term on the right-hand side makes up the contribution $h_{4}:$
\begin{equation}\nonumber
h_{4}=(\varepsilon'-\varepsilon)\partial_{x}^{7}u_{\varepsilon'}.
\end{equation}

We next note that on the right-hand side
of \eqref{bigThreeDerivatives}, there is only one term that is a sixth derivative of $u;$ this term contributes
the following to the evolution equation for $w:$
\begin{multline}\nonumber
A_{6}= f_{z_{3}}(\dx^{3}{\bf u_{\eps}})(\partial_{x}^{6}u_{\varepsilon})
-f_{z_{3}}(\dx^{3}{\bf u_{\eps'}})(\partial_{x}^{6}u_{\varepsilon'})
\\
=f_{z_{3}}(\dx^{3}{\bf u_{\eps}})(\partial_{x}^{6}u_{\varepsilon})
-f_{z_{3}}(\dx^{3}{\bf u_{\eps'}})(\partial_{x}^{6}u_{\varepsilon})
+f_{z_{3}}(\dx^{3}{\bf u_{\eps'}})(\partial_{x}^{6}u_{\varepsilon})
-f_{z_{3}}(\dx^{3}{\bf u_{\eps'}})(\partial_{x}^{6}u_{\varepsilon'})
\\
=\left\{\left(f_{z_{3}}(\dx^{3}{\bf u_{\eps}})-f_{z_{3}}(\dx^{3}{\bf u_{\eps'}})\right)
(\partial_{x}^{6}u_{\varepsilon})
\right\}+f_{z_{3}}(\dx^{3}{\bf u_{\eps'}})(\partial_{x}^{3}w).
\end{multline}
We define $a_{3}$ and $h_{3}$ as follows:
\begin{equation}\label{difference-a3}
a_{3}=f_{z_{3}}(\dx^{3}{\bf u_{\eps'}}),
\end{equation}
\begin{equation}\nonumber
h_{3}=\left(f_{z_{3}}(\dx^{3}{\bf u_{\eps}})-f_{z_{3}}(\dx^{3}{\bf u_{\eps'}})\right)
(\partial_{x}^{6}u_{\varepsilon}).
\end{equation}

To identify $a_{2}$, we must consider the six terms on the right-hand side of \eqref{bigThreeDerivatives}
which involve fifth spatial derivatives of the unknown, $u:$
\begin{multline}\nonumber
A_{5}=\Big(3f_{xz_{3}}(\dx^{3}{\bf u_{\eps}})+3f_{z_{0}z_{3}}(\dx^{3}{\bf u_{\eps}})(\partial_{x}u_{\varepsilon})
+3f_{z_{1}z_{3}}(\dx^{3}{\bf u_{\eps}})(\partial_{x}^{2}u_{\varepsilon})
+3f_{z_{2}z_{3}}(\dx^{3}{\bf u_{\eps}})(\partial_{x}^{3}u_{\varepsilon})
\\
+3f_{z_{3}z_{3}}(\dx^{3}{\bf u_{\eps}})(\partial_{x}^{4}u_{\varepsilon})
+f_{z_{2}}(\dx^{3}{\bf u_{\eps}})\Big)(\partial_{x}^{5}u_{\varepsilon})
\\
-\Big(3f_{xz_{3}}(\dx^{3}{\bf u_{\eps'}})+3f_{z_{0}z_{3}}(\dx^{3}{\bf u_{\eps'}})(\partial_{x}u_{\varepsilon'})
+3f_{z_{1}z_{3}}(\dx^{3}{\bf u_{\eps'}})(\partial_{x}^{2}u_{\varepsilon'})
+3f_{z_{2}z_{3}}(\dx^{3}{\bf u_{\eps'}})(\partial_{x}^{3}u_{\varepsilon'})
\\
+3f_{z_{3}z_{3}}(\dx^{3}{\bf u_{\eps'}})(\partial_{x}^{4}u_{\varepsilon'})
+f_{z_{2}}(\dx^{3}{\bf u_{\eps'}})\Big)(\partial_{x}^{5}u_{\varepsilon'}).
\end{multline}
After some adding and subtracting, we can write this as
\begin{equation}\nonumber
A_{5}=a_{2}(\partial_{x}^{2}w)+b_{2}(\partial_{x}w)+d_{2}w+h_{2},
\end{equation}
where we have the following formulas:
\begin{multline}\label{difference-a2}
a_{2}=3f_{xz_{3}}(\dx^{3}{\bf u_{\eps'}})+3f_{z_{0}z_{3}}(\dx^{3}{\bf u_{\eps'}})(\partial_{x}u_{\varepsilon'})
+3f_{z_{1}z_{3}}(\dx^{3}{\bf u_{\eps'}})(\partial_{x}^{2}u_{\varepsilon'})
+3f_{z_{2}z_{3}}(\dx^{3}{\bf u_{\eps'}})(\partial_{x}^{3}u_{\varepsilon'})
\\
+3f_{z_{3}z_{3}}(\dx^{3}{\bf u_{\eps'}})(\partial_{x}^{4}u_{\varepsilon'})
+f_{z_{2}}(\dx^{3}{\bf u_{\eps'}}),
\end{multline}
\begin{equation}\label{b2Formula}
b_{2}=3f_{z_{3}z_{3}}(\dx^{3}{\bf u_{\eps'}})(\partial_{x}^{5}u_{\varepsilon}),
\end{equation}
\begin{equation}\nonumber
d_{2}=3f_{z_{2}z_{3}}(\dx^{3}{\bf u_{\eps'}})(\partial_{x}^{5}u_{\varepsilon}),
\end{equation}
and
\begin{multline}\nonumber
h_{2}=\Big\{(3f_{xz_{3}}(\dx^{3}{\bf u_{\eps}})+3f_{z_{0}z_{3}}(\dx^{3}{\bf u_{\eps}})(\partial_{x}u_{\varepsilon})
+3f_{z_{1}z_{3}}(\dx^{3}{\bf u_{\eps}})(\partial_{x}^{2}u_{\varepsilon})
+f_{z_{2}}(\dx^{3}{\bf u_{\eps}}))
\\
-(3f_{xz_{3}}(\dx^{3}{\bf u_{\eps'}})+3f_{z_{0}z_{3}}(\dx^{3}{\bf u_{\eps'}})(\partial_{}u_{\varepsilon'})
+3f_{z_{1}z_{3}}(\dx^{3}{\bf u_{\eps'}})(\partial_{x}^{2}u_{\varepsilon'})
+f_{z_{2}}(\dx^{3}{\bf u_{\eps'}}))\Big\}(\partial_{x}^{5}u_{\varepsilon})
\\
+\left(3f_{z_{2}z_{3}}(\dx^{3}{\bf u_{\eps}})-3f_{z_{2}z_{3}}(\dx^{3}{\bf u_{\eps'}})\right)
(\partial_{x}^{5}u_{\varepsilon})(\partial_{x}^{3}u_{\varepsilon})
+\left(3f_{z_{3}z_{3}}(\dx^{3}{\bf u_{\eps}})-3f_{z_{3}z_{3}}(\dx^{3}{\bf u_{\eps'}})\right)
(\partial_{x}^{5}u_{\varepsilon})(\partial_{x}^{4}u_{\varepsilon}).
\end{multline}

We may continue in this way with $A_{4},$ noting that there are 23 terms from the right-hand side of
\eqref{bigThreeDerivatives} which contribute to $A_{4}.$  We can write
\begin{equation}\nonumber
A_{4}=b_{1}(\partial_{x}w)+d_{1}w+h_{1}.
\end{equation}
We may then treat $A_{3}$ in the same manner, noting that there are 16 terms from the right-hand side of
\eqref{bigThreeDerivatives} which contribute to $A_{3}.$  We may write
\begin{equation}\nonumber
A_{3}=d_{0}w+h_{0}.
\end{equation}
Finally, we note that the remaining terms comprising $A_{2}$ all contribute to $h:$
\begin{equation}\nonumber
A_{2}=h_{-1}.
\end{equation}

Now that we have established
the formulas \eqref{difference-a3} and \eqref{difference-a2}, we can see the following form of the ratio:
\begin{equation}\label{difference-ratio}
\frac{a_{2}}{a_{3}}=\partial_{x}\left(3\ln\left(f_{z_{3}}\left(\dx^{3}{\bf u_{\eps'}})\right]\right)\right)
+\frac{f_{z_{2}}(\dx^{3}{\bf u_{\eps'}})}{f_{z_3}(\dx^{3}{\bf u_{\eps'}})}.
\end{equation}

We seek to apply Theorem \ref{thm:lin} (the linear estimate), and as such, we use the definitions of $k_{G}(t)$
and $\tilde{M}(t)$ as given in \eqref{linear-coeff}.  By Lemma \ref{lem:coeff},
as well as the definition of $a_{3}$ in \eqref{difference-a3},
we see that $\|a_{3}\|_{L^{\infty}_{x}}$ and $\|1/a_{3}\|_{L^{\infty}_{x}}$ are bounded.
To conclude that $k_{G}$ is bounded, we then need to conclude that the antiderivative of $a_{2}/a_{3}$ is bounded;
the antiderivative we must consider, using \eqref{difference-ratio}, is
\begin{equation}\nonumber
\int_{0}^{x}\frac{a_{2}}{a_{3}}(x',t)\ dx' = 3\ln(f_{z_{3}}(\dx^{3}{\bf u_{\eps'}}))(x,t)
-3\ln(f_{z_{3}}(\dx^{3}{\bf u_{\eps'}}))(0,t)+\int_{0}^{x}\frac{f_{z_{2}}(\dx^{3}{\bf u_{\eps'}})}
{f_{z_{3}}(\dx^{3}{\bf u_{\eps'}})}(x',t)\ dx'.
\end{equation}
Again using the definition of $a_{3}$ in \eqref{difference-a3}, and using Lemma \ref{lem:coeff}
and knowing that $a_{3}\in C(\mathbb{R}\times[0,T])$ (this fact also uses Condition (A1)), we see that $a_{3}$ is
positive and bounded away from zero for all $x$ and for all $t\in[0,T].$   The properties of the natural logarithm function
and the bound of Lemma \ref{lem:coeff}
imply a uniform bound for $\ln(f_{z_{3}}(\dx^{3}{\bf u_{\eps'}}))(x',t),$ for any $x'$ and $t.$
For the other term (the antiderivative of $f_{z_{2}}/f_{z_{3}}$) we simply again apply Lemma \ref{lem:coeff}.
These considerations yield the desired bound for $k_{G}.$

We still must estimate $\tilde{M}$ and $h.$  To begin with $\tilde{M},$ we must have an estimate for $a_{3}\in W^{3,\infty},$
for $a_{2}\in W^{2,\infty},$ for $a_{1}\in W^{1,\infty},$ and for $a_{0}\in W^{0,\infty}.$  We have already given exact
formulas for $a_{2}$ and $a_{3},$ so we will begin now with a description of $a_{1};$ this is in lieu of being fully
explicit with a formula for $a_{1}.$  We have decomposed $a_{1}$ previously as $a_{1}=b_{1}+b_{2},$ and we have given the
formula for $b_{2}$ in \eqref{b2Formula}.  For $b_{1},$ inspection of \eqref{bigThreeDerivatives}, together with the definition
of $b_{1},$ shows that the regularity of $b_{1}$ is like four derivatives of $u.$  Thus, $b_{2}$ is the most singular part, and
if we can bound $b_{2},$ then we have the requisite bound for $a_{1}.$  By Corollary \ref{2M},
each of $u_{\epsilon}$ and $u_{\epsilon'}$ are uniformly bounded in $H^{7};$ here, when we say
``uniformly,'' we refer to a bound independent of $\varepsilon$ or $\varepsilon',$ and also independent of $t.$
Together with assumption (A1), this implies that $b_{2}$ (and, as per our discussion, $a_{1}$ as well)  is bounded
in $H^{2}$ and thus in $W^{1,\infty},$ as desired.  The bounds in $W^{k,\infty}$ for the other coefficients $a_{k}$ are
similar, so we omit further details.

To complete our estimate of $\tilde{M},$ we must estimate the terms on the right-hand side of \eqref{linear-coeff}
which involve time derivatives.  For both of these terms, which are $\partial_{t}a_{3}$ and
$\partial_{t}\int_{0}^{x}\frac{a_{2}}{a_{3}}\ dy,$ that they are uniformly bounded in time in $L^{\infty}_{x}$ may be demonstrated
identically as in the proof of Lemma \ref{lem:coeff}.

All that remains is the estimate for $h$ in $L^{1}_{t}L^{2}_{x}.$  We can, in fact, bound $h$ in $L^{\infty}_{t}L^{2}_{x},$
and this clearly implies a bound in $L^{1}_{t}L^{2}_{x}$ over our finite time interval.  We treat $h_{4}$ differently from
$h_{j}$ for $j\in\{-1,0,1,2,3\}.$  From the definition of $h_{4},$ we see that
\begin{equation}\nonumber
\|h_{4}\|_{L^{\infty}_{t}L^{2}_{x}}\leq \varepsilon\sup_{t}\|\partial_{x}^{7}u_{\varepsilon'}\|_{L^{2}_{x}}
\leq \varepsilon M.
\end{equation}
We have given detailed formulas for $h_{3}$ and $h_{2}$ above.  From these formulas, it is clear that, because $f$ is smooth
and thus Lipschitz in its arguments, we have the following bound:
\begin{equation}\label{partsOf-h}
\|h_{2}+h_{3}\|_{L^{\infty}_{t}L^{2}_{x}}\leq C(M)\|u_{\varepsilon}-u_{\varepsilon'}\|_{L^{\infty}_{t}H^{3}_{x}}.
\end{equation}
While we have not written out $h_{-1},$ $h_{0},$ and $h_{1}$ fully explicitly, the completely analogous
estimate to \eqref{partsOf-h} is available for them.
Our conclusion for $h$ is then
\begin{equation}\nonumber
\|h\|_{L^{1}_{t}L^{2}_{x}}\leq C(M)T\|u_{\varepsilon}-u_{\varepsilon'}\|_{L^{\infty}_{t}H^3_{x}}
+\varepsilon MT.
\end{equation}
This completes the proof.
\end{proof}

\begin{prop} Let $\delta>0$ and $\delta'>0$ be given.  Let $\varepsilon$ and $\varepsilon'$ be given, such that
$0<\varepsilon<\varepsilon'.$  Let $u^{\varepsilon,\delta}$ and $u^{\varepsilon',\delta'}$ solve the evolution equation
\begin{equation}\nonumber
\partial_{t}v+f(\partial_{x}^{3}v,\ldots,v,x,t)=-\epsilon\partial_{x}^{4}v,
\end{equation}
on the common time interval $[0,T],$ with $T$ as above,
with parameter $\epsilon$ equal to $\varepsilon$ or $\varepsilon',$ respectively, and with initial data
$u^{\varepsilon,\delta}(\cdot,0)=(u_{0})_{\delta}$ and $u^{\varepsilon',\delta'}=(u_{0})_{\delta'},$
for given $u_{0}\in H^{7}.$  (Recall the definition of the regularized data
in \eqref{regularizationDefinition}.)
  Then
there exists $C>0,$ depending on $M,$ such that
\begin{equation}\label{convRateConclusion}
\|u^{\varepsilon,\delta}-u^{\varepsilon',\delta'}\|_{L^{\infty}_{t}H^{7}_{x}}
\leq \frac{C\varepsilon'}{{\delta'}^{4}}+\frac{C(o(\delta^{4})+o({\delta'}^{4}))}{{\delta'}^{4}}+o(1),
\end{equation}
where the $o(1)$ notation indicates a function which vanishes as $\delta\rightarrow 0$ and
$\delta'\rightarrow 0.$
\end{prop}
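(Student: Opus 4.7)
The plan is to set $v = u^{\varepsilon,\delta} - u^{\varepsilon',\delta'}$ and derive a linear evolution equation for $w := \dx^7 v$, to which I can apply Theorem~\ref{thm:lin}. Subtracting the two regularized evolution equations yields
\begin{equation*}
\dt v + f(\dx^3 \vu^{\eps,\delta}) - f(\dx^3 \vu^{\eps',\delta'}) = -\eps\dx^4 v + (\eps' - \eps)\dx^4 u^{\eps',\delta'},
\end{equation*}
with initial data $v(0) = (u_0)_\delta - (u_0)_{\delta'}$. Differentiating seven times in $x$ and separating the linear-in-$v$ pieces from cross terms, as in the derivation of \eqref{differentiate-nonlinear-0} and the decomposition used in the proof of Proposition~\ref{lem:diff}, $w$ satisfies a linear equation of the form \eqref{lin} whose coefficients $a_j$ come from derivatives of $f$ evaluated at $\vu^{\eps',\delta'}$ and whose forcing $h$ is the sum of the parabolic mismatch $(\eps' - \eps)\dx^{11} u^{\eps',\delta'}$ together with terms of the form $[g_\alpha(\vu^{\eps,\delta}) - g_\alpha(\vu^{\eps',\delta'})]\,\dx^m u^{\eps,\delta}$ for various smooth $g_\alpha$ and $m \le 10$.

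Applying Theorem~\ref{thm:lin} with $M := \sup_{\tau\in[0,t]}(M_\eps(\tau) + M_{\eps'}(\tau) + k(\tau))$ and using Lemma~\ref{lem:coeff} to bound $k_G$ and $\tld M$ uniformly in $M$ produces
\begin{equation*}
\|w(t)\|_{L^2} \le C(M)\bigl[\|w(0)\|_{L^2} + \|h\|_{L^1_t L^2_x}\bigr].
\end{equation*}
The three summands in the target estimate arise from three distinct contributions. First, Lemma~\ref{lem:regularize} yields $\|w(0)\|_{L^2} \le \|(u_0)_\delta - u_0\|_{H^7} + \|u_0 - (u_0)_{\delta'}\|_{H^7} = o(1)$, producing the $o(1)$ term. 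Second, Corollary~\ref{coro:remainder} provides $\|u^{\eps',\delta'}\|_{H^{11}} \le C(M)/{\delta'}^4$, so the parabolic mismatch contributes
\begin{equation*}
\|(\eps - \eps')\dx^{11} u^{\eps',\delta'}\|_{L^1_t L^2_x} \le T\,\eps'\,C(M)/{\delta'}^4,
\end{equation*}
producing the $C\eps'/{\delta'}^4$ term. Third, each remaining cross term is estimated by Lipschitz continuity of $g_\alpha(\vu^\cdot)$ paired with $\dx^m u^{\eps,\delta}$, yielding $C(M)\|v\|_{H^s}\,\|u^{\eps,\delta}\|_{H^{m}}$ for suitable $s$ and $m$. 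Combining the $H^{11}$ bound from Corollary~\ref{coro:remainder} with the small-in-$H^3$ bound $\|v\|_{L^\infty_t H^3_x} \le C(M)[\eps + \eps' + o(\delta^4) + o({\delta'}^4)]$ supplied by Proposition~\ref{lem:diff} then yields the remaining $C[o(\delta^4)+o({\delta'}^4)]/{\delta'}^4$ term.

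The main technical obstacle will be controlling these nonlinear forcing terms: the Lipschitz argument for $g_\alpha(\vu^{\eps,\delta}) - g_\alpha(\vu^{\eps',\delta'})$ in $L^\infty$ most naturally requires $\|v\|_{W^{3,\infty}}$, which by Sobolev embedding needs $\|v\|_{H^4}$ rather than just $\|v\|_{H^3}$. I would close this half-derivative gap by interpolating between the small $H^3$ norm from Proposition~\ref{lem:diff} and the large-but-controlled $H^{11}$ bound from Corollary~\ref{coro:remainder}, absorbing the residual powers of $\delta$ and $\delta'$ into the single factor $1/{\delta'}^4$ appearing in the statement. Once this is settled, the enumeration of cross terms mirrors the decompositions carried out explicitly in the proof of Proposition~\ref{lem:diff} and in Lemma~\ref{lem:remainder11}, and Gronwall's inequality through Theorem~\ref{thm:lin} completes the proof.
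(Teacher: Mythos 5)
Your approach is essentially the same as the paper's: set $w=\partial_x^7(u^{\eps,\delta}-u^{\eps',\delta'})$, write its evolution as an instance of \eqref{lin}, invoke Theorem \ref{thm:lin}, bound $w_0$ via \eqref{eq:BS:L2}, peel off the parabolic mismatch $h_1=(\eps'-\eps)\partial_x^{11}u^{\eps',\delta'}$ with Corollary \ref{coro:remainder}, and reduce the remaining forcing to $\|v\|_{L^\infty_tH^3}$ controlled by \eqref{diff:lim}. One remark about the ``technical obstacle'' you flagged: the $W^{3,\infty}$ (i.e.\ $H^4$) requirement on $v$ that worries you arises only if you insist on putting the Lipschitz difference $f_{z_3}(\partial_x^3\vu^{\eps,\delta})-f_{z_3}(\partial_x^3\vu^{\eps',\delta'})$ in $L^\infty_x$. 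The paper sidesteps this entirely by choosing the opposite H\"older split: the high-derivative explicit factor $\partial_x^{10}u^{\eps',\delta'}$ goes in $L^\infty_x$ (controlled by $\|u^{\eps',\delta'}\|_{H^{11}}\lesssim \delta'^{-4}$), and the Lipschitz difference goes in $L^2_x$, where it is Lipschitz-bounded by $\|v\|_{H^3}$ with a constant depending only on the $L^\infty$ size of the arguments (i.e.\ on $M$). That renders your interpolation detour between $H^3$ and $H^{11}$ unnecessary for the leading term; and note that a naive $H^{11}$ interpolation bound on $v$ itself would pick up the worse factor $\delta^{-4}$ rather than the $\delta'^{-4}$ appearing in the conclusion, since $\|v\|_{H^{11}}\lesssim \delta^{-4}+\delta'^{-4}$. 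Also, you want the explicit high-derivative factor to come from $u^{\eps',\delta'}$, not $u^{\eps,\delta}$, precisely so the $\delta'^{-4}$ — and not $\delta^{-4}$ — is what survives. Finally, the Gronwall step is already baked into Theorem \ref{thm:lin} and into Proposition \ref{lem:diff}; once the latter is invoked to close the $H^3$ estimate of $v$ inside $h$, the linear estimate is applied once and no further iteration is needed.
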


\begin{proof}
The proof of this proposition is similar to the proof of the previous proposition, but we estimate some terms differently
(making use of properties of mollifiers).
We let $w=\partial_{x}^{7}(u-u^{\prime}),$ and we write
\begin{equation}\nonumber
w_{t}=-\varepsilon\partial_{x}^{4}w+a_{3}\partial_{x}^{3}w+a_{2}\partial_{x}^{2}w+a_{1}\partial_{x}w+a_{0}w+h.
\end{equation}
We are again using Theorem \ref{thm:lin}, so we again need to check the bounds for the induced $k_{G}$ and
$\tilde{M}$ as defined in \eqref{linear-coeff}.  As we defined the coefficients earlier in the system \eqref{nonlinear-linear},
we again have the same formulas for the coefficients $a_{i},$ especially
\begin{equation}\nonumber
a_{3}=f_{z_{3}}(\dx^{3}{\bf u^{\eps,\delta}}),\qquad
a_{2}=f_{z_{2}}(\dx^{3}{\bf u^{\eps,\delta}})+7\partial_{x}\left(f_{z_{3}}(\dx^{3}{\bf u^{\eps,\delta}})\right).
\end{equation}
Together with the uniform bound of Corollary \ref{2M}, most of the required estimates for $k_{G}$ and $\tilde{M}$ are then
routine to check.  We focus now on the estimates for $w_{0}$ and $h.$

We do not provide full details, but instead focus on the most interesting terms.
To begin, we consider the initial data:
\begin{equation}\nonumber
\|w_{0}\|_{L^{2}}=\left\|\partial_{x}^{7}((u_{0})_{\delta}-(u_{0})_{\delta'})\right\|_{L^{2}}.
\end{equation}
We then use \eqref{eq:BS:L2} to bound this as
\begin{equation}\nonumber
\|w_{0}\|_{L^{2}}\leq o(1).
\end{equation}

Otherwise, we now focus on the two
most singular terms; we write
\begin{equation}\nonumber
h=h_{1}=h_{2}+h_{rest},
\end{equation}
with $h_{1}$ and $h_{2}$ defined by
\begin{equation}\nonumber
h_{1}=(\varepsilon-\varepsilon')\partial_{x}^{11}u^{\varepsilon',\delta'},
\end{equation}
and
\begin{equation}\nonumber
h_{2}=\left(\partial_{x}^{10}u^{\varepsilon',\delta'}\right)
\left(f_{z_{3}}(\dx^{3}{\bf u^{\eps,\delta}})-f_{z_{3}}(\dx^{3}{\bf u^{\eps',\delta'}})\right).
\end{equation}

Recalling that we must bound $h$ in $L^{1}_{t}L^{2}_{x},$ we estimate $h_{1}$ and $h_{2}$ in this space.
We begin with $h_{1};$ we find the following by making use of Corollary \ref{coro:remainder}:
\begin{equation}\nonumber
\|h_{1}\|_{L^{1}_{t}L^{2}_{x}}\leq T\|h_{1}\|_{L^{\infty}_{t}L^{2}_{x}}\leq T\max\{\varepsilon,\varepsilon'\}
\|u^{\varepsilon',\delta'}\|_{L^{\infty}_{t}H^{11}_{x}}
\leq \frac{C\max\{\varepsilon,\varepsilon'\}}{{\delta'}^{4}}=\frac{C\varepsilon'}{{\delta'}^{4}}.
\end{equation}

We next consider $h_{2},$ and begin as we did for $h_{1}:$
\begin{equation}\nonumber
\|h_{2}\|_{L^{1}_{t}L^{2}_{x}}\leq T \|h_{2}\|_{L^{\infty}_{t}L^{2}_{x}}\leq T
\|\partial_{x}^{10}u^{\varepsilon',\delta'}\|_{L^{\infty}_{t}L^{\infty}_{x}}
\left\|f_{z_{3}}(\dx^{3}{\bf u^{\eps,\delta}})-f_{z_{3}}(\dx^{3}{\bf u^{\eps',\delta'}})\right\|_{L^{\infty}_{t}L^{2}_{x}}.
\end{equation}
We use Sobolev embedding and we again use Corollary \ref{coro:remainder}, finding the following:
\begin{equation}\nonumber
\|h_{2}\|_{L^{1}_{t}L^{2}_{x}}\leq \frac{C}{\delta'^{4}}
\left\|f_{z_{3}}(\dx^{3}{\bf u^{\eps,\delta}})-f_{z_{3}}(\dx^{3}{\bf u^{\eps',\delta'}})\right\|_{L^{\infty}_{t}L^{2}_{x}}.
\end{equation}
A Lipschitz estimate for $f$ (as in the proof of Proposition \ref{lem:diff}) allows us to bound this as follows:
\begin{equation}\nonumber
\|h_{2}\|_{L^{1}_{t}L^{2}_{x}}\leq \frac{C}{{\delta'}^{4}}\|u^{\varepsilon,\delta}-u^{\varepsilon',\delta'}\|_{L^{\infty}_{t}H^{3}_{x}}.
\end{equation}
We use Proposition \ref{lem:diff}, and in particular \eqref{diff:lim}, to bound the right-hand side in terms of the initial data:
\begin{equation}\label{stillNeedDataDelta}
\|h_{2}\|_{L^{1}_{t}L^{2}_{x}}\leq \frac{C\varepsilon'}{{\delta'}^{4}}
+\frac{C}{{\delta'}^{4}}\|(u_{0})_{\delta}-(u_{0})_{\delta'}\|_{H^{3}_{x}}.
\end{equation}
Interpolating in \eqref{eq:BS:L2}, we have
\begin{equation}\label{interpMollStuff}
\|(u_{0})_{\delta}-(u_{0})_{\delta'}\|_{H^{3}}\leq C\left(o(\delta^{4})+o(\delta'^{4})\right).
\end{equation}

Using \eqref{interpMollStuff}  with \eqref{stillNeedDataDelta}, we conclude
\begin{equation}\nonumber
\|h_{2}\|_{L^{1}_{t}L^{2}_{x}}\leq \frac{C\varepsilon'}{{\delta'}^{4}}+\frac{C(o(\delta^{4})+o({\delta'}^{4}))}{{\delta'}^{4}}.
\end{equation}

Since $h_{1}$ and $h_{2}$ are the most singular terms, we omit the remaining details and consider the proof to be complete.
\end{proof}
The following corollary is an immediate consequence.

\begin{coro} \label{cauchyCorollary}
Let $\delta>0$ and $\delta'>0$ be given such that $\delta'>\delta.$
Let $\varepsilon=\delta^{5}$ and $\varepsilon'=\delta'^{5}.$
Let $u^{\delta}$ and $u^{\delta'}$ solve the initial value problem
\eqref{parab:non} on the common time interval $[0,T],$ with $T$ as above,
with parameter values $(\varepsilon,\delta)$ and $(\varepsilon',\delta'),$ respectively.  Then
\begin{equation}\nonumber
\|u^{\delta}-u^{\delta'}\|_{L^{\infty}_{t}H^{7}_{x}}
=o(1),
\end{equation}
where the $o(1)$ notation indicates a function which vanishes as $\delta'\rightarrow 0.$  This convergence
is uniform with respect to choice of $u_{0}$ if $u_{0}$ is taken from a compact set.
\end{coro}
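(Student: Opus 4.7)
The plan is to directly substitute the specific choices $\varepsilon=\delta^{5}$ and $\varepsilon'={\delta'}^{5}$ into the bound \eqref{convRateConclusion} from the preceding proposition, and then verify that each of the three terms on the right-hand side tends to zero as $\delta'\to 0$. Since $\delta<\delta'$, sending $\delta'\to 0$ forces $\delta\to 0$ as well, so both small-parameter limits occur simultaneously.

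For the first term, the choice $\varepsilon'={\delta'}^{5}$ gives
\[
\frac{C\varepsilon'}{{\delta'}^{4}} \;=\; \frac{C{\delta'}^{5}}{{\delta'}^{4}} \;=\; C\delta' \;\longrightarrow\; 0.
\]
For the second term, the key observation is that $o(\delta^{4})$ as $\delta\to 0$, combined with $\delta<\delta'$, still yields $o({\delta'}^{4})$: indeed if $h(\delta)/\delta^{4}\to 0$ then $h(\delta)/{\delta'}^{4}\le h(\delta)/\delta^{4}\to 0$. Hence
\[
\frac{C\bigl(o(\delta^{4})+o({\delta'}^{4})\bigr)}{{\delta'}^{4}} \;=\; o(1).
\]
The final $o(1)$ term in \eqref{convRateConclusion} is already of the required form. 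Adding the three contributions gives the claimed $o(1)$ bound on $\|u^{\delta}-u^{\delta'}\|_{L^{\infty}_{t}H^{7}_{x}}$.

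For uniformity in $u_{0}$ over a compact set $\K\Subset H^{7}$, every $o(1)$ contribution must be shown to converge at a rate depending only on $\K$ and not on the individual data. The $o(1)$ appearing in \eqref{convRateConclusion} ultimately traces back to the mollification estimate \eqref{eq:BS:L2} of Lemma \ref{lem:regularize}, which is already stated to have convergence rate uniform over compact sets (via Lebesgue dominated convergence in Fourier space). All other constants $C$ depend only on $M$, which is controlled by $\|u_{0}\|_{H^{7}}$ and $\lambda_{0}$, both of which are bounded on $\K$ by Theorem \ref{main-thm}'s hypothesis and continuity of $\lambda$ on $\K$. No step presents a genuine obstacle; the corollary is essentially a bookkeeping consequence of the preceding proposition under the prescribed balance $\varepsilon=\delta^{5}$, whose purpose is to make the singular factor ${\delta'}^{-4}$ harmless.
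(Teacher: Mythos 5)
Your proof is correct and follows the same route as the paper: substitute $\varepsilon=\delta^{5}$, $\varepsilon'=\delta'^{5}$ into \eqref{convRateConclusion}, check each term vanishes as $\delta'\to 0$ (using $\delta<\delta'$), and appeal to the uniform-on-compacts rate in Lemma \ref{BS} for the final claim. You simply spell out the arithmetic the paper treats as self-evident, which is fine.
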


\begin{proof}  The only statement which requires justification is the final statement, about uniformity of the convergence
when the initial data is taken from a compact set.  This uniformity is provided by Lemma \ref{BS}, for the term on the
right-hand side of \eqref{convRateConclusion} denoted as $o(1).$
\end{proof}

We are now able to conclude that our original system, \eqref{generalEquation}, has a solution in $H^{7}.$  This is the first
conclusion of Theorem \ref{main-thm}.
\begin{coro}\label{existencePartOfMainTheorem}
Under the assumptions of Theorem \ref{main-thm}, there exists $T=T(\|u_{0}\|_{H^{7}},\lambda_{0})$ such that
there exists a classical solution $u\in C_{[-T,T]}H^{7}$ of $\eqref{generalEquation}$ with $u(\cdot,0)=u_{0}.$
\end{coro}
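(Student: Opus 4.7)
The plan is to obtain $u$ as the limit of the regularized solutions $u^{\eps,\delta}$ constructed above, and then to verify that $u$ has the claimed regularity and satisfies \eqref{generalEquation} in the classical sense. For positive times I would work directly with the parabolic regularization; for negative times I would exploit a time-reversal symmetry of the equation. To begin, fix the coupling $\eps=\delta^5$ as in Corollary \ref{cauchyCorollary}. For $\delta\le \delta_0$, Corollary \ref{uniform:exist} supplies a common interval $[0,T]$ with $T=T(\norm{u_0}_{H^7},\lambda_0)>0$ on which $\{u^\delta\}$ is uniformly bounded in $L^\infty_{[0,T]} H^7_x$, and Corollary \ref{cauchyCorollary} then shows that $\{u^\delta\}$ is Cauchy in $L^\infty_{[0,T]} H^7_x$ as $\delta\to 0$. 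Letting $u\in L^\infty_{[0,T]} H^7_x$ denote the limit, the fact that each $u^\delta\in C_{[0,T]} H^7_x$ (Proposition \ref{parab:prop}) together with uniform-in-$t$ convergence gives $u\in C_{[0,T]} H^7_x$ with $u(\cdot,0)=u_0$, the latter via \eqref{eq:BS:L2}.

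Next I would pass to the limit in \eqref{parab:non} term by term. For the parabolic term, Corollary \ref{coro:remainder} gives $\norm{u^\delta}_{L^\infty_{[0,T]} H^{11}_x}\le C\delta^{-4}$, so
\begin{equation*}
  \norm{\eps \dx^4 u^\delta}_{L^\infty_{[0,T]} H^{7}_x}\le \delta^5\cdot C\delta^{-4}=C\delta\longrightarrow 0.
\end{equation*}
For the nonlinear term, $u^\delta\to u$ in $L^\infty_t H^7_x$ together with the Sobolev embedding $H^7\hookrightarrow C^3$ and the smoothness of $f$ from Condition (A1) yields $f(\dx^3 u^\delta,\ldots,u^\delta,x,t)\to f(\dx^3 u,\ldots,u,x,t)$ uniformly on $[0,T]\times \R$. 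Passing to the limit distributionally then gives $\dt u = f(\dx^3 u,\ldots,u,x,t)$; since the right-hand side lies in $C_{[0,T]} H^4_x$ by the chain rule applied to $f$, one obtains $u\in C^1_{[0,T]} H^4_x$, and the Sobolev embedding $H^4\hookrightarrow C^1$ promotes the identity to a classical (pointwise) one.

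For the interval $[-T,0]$, I would apply the same construction to the time-reversed equation. Setting $v(x,s):=u(x,-s)$ formally gives $\dt v=-f(\dx^3 v,\ldots,v,x,-s)$; the nonlinearity $\tilde f(\vz,x,s):=-f(\vz,x,-s)$ satisfies Conditions (A1) and (A3) immediately, and (A2) is preserved since $\tilde f_{z_2}/\tilde f_{z_3}=f_{z_2}/f_{z_3}$, so $\lambda_0$ evaluated on $u_0$ is unchanged. The entire argument above then produces a solution $v\in C_{[0,T]} H^7_x$, and reflection yields a solution on $[-T,0]$ that agrees with the earlier one at $t=0$; concatenation delivers $u\in C_{[-T,T]} H^7_x$. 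The main obstacle is the simultaneous control of the vanishing of $\eps\dx^4 u^\delta$ against the $H^{11}$-growth of $u^\delta$, which is precisely the point of the balance $\eps=\delta^5$ versus $\norm{u^\delta}_{H^{11}}\lesssim \delta^{-4}$ from Corollary \ref{coro:remainder}; once this balance is in place, the remaining steps reduce to routine continuity and chain rule considerations supported by Condition (A1).
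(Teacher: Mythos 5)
Your proof is correct and follows the same overall plan as the paper (extract the limit via Corollary \ref{cauchyCorollary}, pass to the limit in the regularized equation, and upgrade to a classical solution), but you supply two pieces of detail that the paper leaves implicit. First, you explicitly address the negative-time interval by applying the whole construction to the time-reversed problem $\partial_s v = -f(\partial_x^3 v,\ldots,v,x,-s)$ with its own parabolic regularization $-\eps\partial_x^4 v$; this is necessary because $-\eps\partial_x^4$ is not time-reversible and Proposition \ref{parab:prop} is a forward-time result, yet the paper simply asserts the limit lives in $C_{[-T,T]}H^7$ without comment. Your check that $\tilde f = -f(\cdot,x,-s)$ still satisfies (A1)--(A3), with $\tilde f_{z_2}/\tilde f_{z_3}=f_{z_2}/f_{z_3}$ and the same $\lambda_0$, is exactly what is needed to justify this. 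Second, you invoke Corollary \ref{coro:remainder} and the balance $\eps=\delta^5$ versus $\|u^\delta\|_{H^{11}}\lesssim\delta^{-4}$ to kill the parabolic term in $H^7$; the paper is content with a cruder estimate (the uniform $H^7$ bound already makes $\delta^5\partial_x^4 u^\delta\to 0$ in, say, $L^\infty_t H^3_x$, which suffices for passing to the limit in the integral representation), so your route is more than sufficient but not strictly required. These are genuine clarifications, not departures, and the argument is sound.
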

\begin{proof}
From Corollary \ref{cauchyCorollary}, the sequence $u^{\delta}$ is Cauchy.  It has a limit, $u\in C_{[-T,T]}H^{7}.$
Integrating \eqref{parab:non} with respect to time, choosing $\varepsilon=\delta^{5}$ as in Corollary \ref{cauchyCorollary},
and using the Fundamental Theorem of Calculus, we get a formula for $u^{\delta}$ which involves at most fourth derivatives
of itself inside a time integral:
\begin{equation}\label{regIntegralRep}
u^{\delta}(\cdot,t)=(u_{0})_{\delta}+\int_{0}^{t}f(\dx^{3}{\bf u}^{\delta}(\cdot,\tau)) +\delta^{5}\partial_{x}^{4}u^{\delta}(\cdot,\tau)\ d\tau.
\end{equation}

  The convergence of $u^{\delta}$ to $u$ in $C_{[-T,T]}H^{7}$ implies that the
convergence of up to fourth derivatives is uniform in $t,$ and this uniform convergence allows us to pass to the limit
as $\delta\rightarrow0$ in the integral representation formula \eqref{regIntegralRep}.
After taking the limit, we take the time derivative again to find that
\eqref{generalEquation} is satisfied.
\end{proof}

\begin{rem} \label{uniformityRemark}
Again, the convergence of $u^{\delta}$ to $u$ is uniform with respect to the choice of initial data $u_{0}$
if this data is chosen from a compact subset of $H^{7}.$  To state this more precisely, let $\mathcal{K}$ be a compact
subset of $H^{7}.$  Recall that the time of existence of solutions to the initial value problem, $T,$
guaranteed by our existence theorem can be taken to be independent of the initial data $u_{0}\in\mathcal{K}.$
Let $\eta>0$ be given.  There exists $D>0$ such that for all $\delta\in(0,D),$
for all $u_{0}\in\mathcal{K},$
\begin{equation}\nonumber
\sup_{t\in[-T,T]}\|u^{\delta}(\cdot,t)-u(\cdot,t)\|_{H^{7}}<\eta,
\end{equation}
where $u^{\delta}$ and $u$ are the solutions of the unregularized and regularized problems, respectively, corresponding
to the unregularized initial data $u_{0}.$  This uniformity on compact sets stems from the uniformity in Lemma \ref{BS},
as in the proof of Corollary \ref{cauchyCorollary}.
\end{rem}

We are now able to state our continuous dependence result for initial data in $H^{7}.$
\begin{coro}
Let $u_{0}\in H^{7}$ and let $u_{n}$ be a sequence in $H^{7}$ such that $u_{n}\rightarrow u_{0}.$
Note that since $\{u_{n}\in H^{7}:n\in\{0,1,2,\ldots\}\}$ is compact in $H^{7}$,
the time of existence of solutions guaranteed by Corollary
\ref{existencePartOfMainTheorem} can be taken to be independent of $n;$ we therefore let
$U_{n}\in C_{[-T,T]}H^{7}$ be the solution of the initial value problem \eqref{generalEquation} with data
 $u_{n},$ for all $n\in\{0,1,2,\ldots\},$ with this $T$ independent of $n.$  Then,
 \begin{equation}\nonumber
 \lim_{n\rightarrow\infty}\sup_{t\in[-T,T]}\|U_{n}-U_{0}\|_{H^{7}}=0.
 \end{equation}
\end{coro}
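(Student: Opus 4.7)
The plan is to use a Bona--Smith-type triangle inequality, with the parabolically regularized solutions as intermediaries. Let $\mathcal{K} = \{u_n : n \geq 0\} \cup \{u_0\}$; since $u_n \to u_0$ in $H^7$, this is a compact subset of $H^7$, and so by Corollary \ref{existencePartOfMainTheorem} (together with Corollary \ref{uniform:exist}) we may take $T > 0$ uniform in $n$ so that every $U_n$ exists on $[-T,T]$, and likewise every regularized solution $U_n^\delta$ (with $\varepsilon = \delta^5$ and data $(u_n)_\delta$) exists on the same interval for every $\delta \in (0,\delta_0]$. Moreover, by Corollary \ref{2M}, these regularized solutions are uniformly bounded in $C_{[-T,T]} H^7$ with constant depending only on $\|u_0\|_{H^7}$ and $\lambda_0$.

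Given $\eta > 0$, I would split
\[
\|U_n - U_0\|_{L^\infty_{[-T,T]} H^7_x} \leq \|U_n - U_n^\delta\| + \|U_n^\delta - U_0^\delta\| + \|U_0^\delta - U_0\|
\]
and control each term. The first and third are handled directly by the uniformity clause of Remark \ref{uniformityRemark}: there exists $\delta_1 \in (0,\delta_0]$ such that for every $\delta \in (0,\delta_1]$ and every $n \geq 0$, each of these terms is at most $\eta/3$. This is precisely the uniformity of the regularization convergence on compact subsets of $H^7$, which in turn traces back to the Dominated Convergence argument in Lemma \ref{lem:regularize}.

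I would then fix $\delta = \delta_1 > 0$. At this strictly positive value of $\delta$, equation \eqref{parab:non} is a semilinear parabolic equation, and Proposition \ref{parab:prop} provides a continuous data-to-solution map $H^7 \to C([-T,T]; H^7)$. The uniform bound $\|U_n^\delta\|_{H^7} \leq 2M$ combined with the blow-up criterion \eqref{parabolic-blow-up} ensures that $[-T,T]$ sits strictly inside the maximal lifespan for every $n$, so continuous dependence is genuinely available on this full fixed interval. Since mollification by $\frac{1}{\delta}\check{\phi}(\cdot/\delta)$ is a bounded operator on $H^7$, $(u_n)_\delta \to (u_0)_\delta$ in $H^7$ as $n \to \infty$, and consequently $U_n^\delta \to U_0^\delta$ in $C([-T,T]; H^7)$; there exists $N$ such that the middle term is below $\eta/3$ for all $n \geq N$. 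Combining the three bounds gives $\sup_{t \in [-T,T]} \|U_n - U_0\|_{H^7} < \eta$ for $n \geq N$, which is the claim. (The interval $[-T,0]$ is treated identically via the time-reversed equation, whose nonlinearity $\tilde{f}(\vz,x,t) = -f(\vz,x,-t)$ satisfies (A1)--(A3) with the same modified diffusion ratio $g_M$.)

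The main obstacle — and the reason the proof is not purely a soft consequence of the preceding machinery — is the quantitative uniformity of the Bona--Smith regularization error across the compact set $\mathcal{K}$. Without this uniformity, the first and third terms could not be made small \emph{uniformly in $n$}, and the triangle inequality would collapse: one cannot choose $\delta$ before knowing $n$, since the parabolic continuous dependence in step two is only available at fixed $\delta > 0$. This is exactly why Lemma \ref{lem:regularize} was stated with a compactness-uniform rate and why Remark \ref{uniformityRemark} was recorded explicitly, and both feed directly into the argument here.
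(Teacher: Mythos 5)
Your proof is correct and follows essentially the same strategy as the paper: the same three-term triangle-inequality decomposition through the regularized solutions $U_n^\delta$, controlling the outer terms via the compactness-uniform regularization rate (Remark \ref{uniformityRemark}) and the middle term via the continuous dependence of the parabolic problem at fixed $\delta>0$ (Proposition \ref{parab:prop}). Your additional remarks—making explicit that mollification is bounded on $H^7$ so $(u_n)_\delta \to (u_0)_\delta$, and noting the time-reversal for $[-T,0]$—are correct elaborations that the paper leaves implicit.
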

\begin{proof}  Let $\eta>0$ be given.  Given any $\delta>0,$
we begin by adding and subtracting:
\begin{equation}\nonumber
U_{n}-U_{0}=\left(U_{n}-U_{n}^{\delta}\right)
+\left(U_{n}^{\delta}-U_{0}^{\delta}\right)
+\left(U_{0}^{\delta}-U_{0}\right).
\end{equation}
As in Remark \ref{uniformityRemark}, we may take a particular $\delta>0$ such that for all $m\in\{0,1,2,\ldots\},$
\begin{equation}\nonumber
\sup_{t\in[-T,T]}\|U_{m}^{\delta}(\cdot,t)-U_{m}(\cdot,t)\|_{H^{7}}<\frac{\eta}{3}.
\end{equation}
To make $U_{n}-U_{0}$ small, then, it is only necessary to focus on the difference $U_{n}^{\delta}-U_{0}^{\delta}.$
These are solutions of the approximate problem \eqref{parab:non} for fixed parameter $\delta.$  This is a parabolic problem,
and as such, has the continuous dependence result of Proposition \ref{parab:prop}.
Therefore there exists $N\in\mathbb{N}$ such that for all $n>N,$
we have
\begin{equation}\nonumber
\sup_{t\in[-T,T]}\|U_{n}^{\delta}(\cdot,t)-U_{0}^{\delta}(\cdot,t)\|_{H^{7}}<\frac{\eta}{3}.
\end{equation}
This completes the proof.
\end{proof}

\section{Fully explicit calculation of third derivative}

Here, we give a complete calculation of $\partial_{x}f,$ $\partial_{xx}f,$ and $\partial_{xxx}f.$
We begin with simply the first derivative:
\begin{equation}\nonumber
u_{xt}=f_{x}+f_{z_{0}}u_{x}+f_{z_{1}}u_{xx}+f_{z_{2}}u_{xxx}+f_{z_{3}}(\partial_{x}^{4}u).
\end{equation}
We apply another derivative with respect to $x,$ finding the following:
\begin{multline}\nonumber
u_{xxt}=f_{xx}+2f_{xz_{0}}u_{x}+2f_{xz_{1}}u_{xx}+2f_{xz_{2}}u_{xxx}+2f_{xz_{3}}(\partial_{x}^{4}u)
+f_{z_{0}}u_{xx}+f_{z_{0}z_{0}}u_{x}^{2}
\\
+2f_{z_{0}z_{1}}u_{x}u_{xx}+2f_{z_{0}z_{2}}u_{x}u_{xxx}+2f_{z_{0}z_{3}}u_{x}(\partial_{x}^{4}u)
+f_{z_{1}}u_{xxx}+f_{z_{1}z_{1}}u_{xx}^{2}\\
+2f_{z_{1}z_{2}}u_{xx}u_{xxx}+2f_{z_{1}z_{3}}u_{xx}(\partial_{x}^{4}u)+f_{z_{2}}(\partial_{x}^{4}u)
+f_{z_{2}z_{2}}u_{xxx}^{2}+2f_{z_{2}z_{3}}u_{xxx}(\partial_{x}^{4}u)\\
+f_{z_{3}}(\partial_{x}^{5}u)+f_{z_{3}z_{3}}(\partial_{x}^{4}u)^{2}.
\end{multline}
We differentiate once more.  The formula for the third derivative uses 59 terms on the right-hand side:
\begin{multline}\label{bigThreeDerivatives}
u_{xxxt}=f_{xxx}+3f_{xxz_{0}}u_{x}+3f_{xxz_{1}}u_{xx}+3f_{xxz_{2}}u_{xxx}
+3f_{xxz_{3}}(\partial_{x}^{4}u)+3f_{xz_{0}}u_{xx}
\\
+3f_{xz_{0}z_{0}}u_{x}^{2}+6f_{xz_{0}z_{1}}u_{x}u_{xx}
+6f_{xz_{0}z_{2}}u_{x}u_{xxx}+6f_{xz_{0}z_{3}}u_{x}(\partial_{x}^{4}u)
+3f_{xz_{1}}u_{xxx}\\
+3f_{xz_{1}z_{1}}u_{xx}^{2}+6f_{xz_{1}z_{2}}u_{xx}u_{xxx}
+6f_{xz_{1}z_{3}}u_{xx}(\partial_{x}^{4}u)+3f_{xz_{2}}(\partial_{x}^{4}u)
+3f_{xz_{2}z_{2}}u_{xxx}^{2}
\\
+6f_{xz_{2}z_{3}}u_{xxx}(\partial_{x}^{4}u)+3f_{xz_{3}}(\partial_{x}^{5}u)
+3f_{xz_{3}z_{3}}(\partial_{x}^{4}u)^{2}+f_{z_{0}}u_{xxx}
+3f_{z_{0}z_{0}}u_{x}u_{xx}\\
+3f_{z_{0}z_{1}}(u_{x}u_{xxx}+u_{xx}^{2})
+3f_{z_{0}z_{2}}(u_{x}(\partial_{x}^{4}u)+u_{xx}u_{xxx})
+3f_{z_{0}z_{3}}(u_{x}(\partial_{x}^{5}u)+u_{xx}(\partial_{x}^{4}u))\\
+f_{z_{0}z_{0}z_{0}}u_{x}^{3}
+3f_{z_{0}z_{0}z_{1}}u_{x}^{2}u_{xx}
+3f_{z_{0}z_{0}z_{2}}u_{x}^{2}u_{xxx}
+3f_{z_{0}z_{0}z_{3}}u_{x}^{2}(\partial_{x}^{4}u)
+3f_{z_{0}z_{1}z_{1}}u_{x}u_{xx}^{2}\\
+6f_{z_{0}z_{1}z_{2}}u_{x}u_{xx}u_{xxx}
+6f_{z_{0}z_{1}z_{3}}u_{x}u_{xx}(\partial_{x}^{4}u)
+3f_{z_{0}z_{2}z_{2}}u_{x}u_{xxx}^{2}
+6f_{z_{0}z_{2}z_{3}}u_{x}u_{xxx}(\partial_{x}^{4}u)
\\
+3f_{z_{0}z_{3}z_{3}}u_{x}(\partial_{x}^{4}u)^{2}
+f_{z_{1}}(\partial_{x}^{4}u)
+3f_{z_{1}z_{1}}u_{xx}u_{xxx}
+3f_{z_{1}z_{2}}(u_{xx}(\partial_{x}^{4}u)+u_{xxx}^{3})
\\
+3f_{z_{1}z_{3}}(u_{xx}(\partial_{x}^{5}u)+u_{xxx}(\partial_{x}^{4}u))
+f_{z_{1}z_{1}z_{1}}u_{xx}^{3}
+3f_{z_{1}z_{1}z_{2}}u_{xx}^{2}u_{xxx}
+3f_{z_{1}z_{1}z_{3}}u_{xx}^{2}(\partial_{x}^{4}u)\\
+3f_{z_{1}z_{2}z_{2}}u_{xx}u_{xxx}^{2}
+6f_{z_{1}z_{2}z_{3}}u_{xx}u_{xxx}(\partial_{x}^{4}u)
+3f_{z_{1}z_{3}z_{3}}u_{xx}(\partial_{x}^{4}u)^{2}
+f_{z_{2}}(\partial_{x}^{5}u)\\
+3f_{z_{2}z_{2}}u_{xxx}(\partial_{x}^{4}u)
+3f_{z_{2}z_{3}}(u_{xxx}(\partial_{x}^{5}u)+(\partial_{x}^{4}u)^{2})
+f_{z_{2}z_{2}z_{2}}u_{xxx}^{3}
+3f_{z_{2}z_{2}z_{3}}u_{xxx}^{2}(\partial_{x}^{4}u)\\
+3f_{z_{2}z_{3}z_{3}}u_{xxx}(\partial_{x}^{4}u)^{2}
+f_{z_{3}}(\partial_{x}^{6}u)+3f_{z_{3}z_{3}}(\partial_{x}^{4}u)(\partial_{x}^{5}u)
+f_{z_{3}z_{3}z_{3}}(\partial_{x}^{4}u)^{3}.
\end{multline}

\bibliographystyle{alpha}
\bibliography{kdv}
\end{document}